\numberwithin{equation}{section}
\newtheorem{thm}{Theorem}[section]
\newtheorem*{thm*}{Theorem}
\newtheorem{prop}[thm]{Proposition}
\newtheorem*{prop*}{Proposition}
\newtheorem{question}[thm]{Question}
\newtheorem{cor}[thm]{Corollary}
\newtheorem{lemma}[thm]{Lemma}
\newtheorem{key}[thm]{Key Lemma}
\theoremstyle{definition}
\newtheorem{defin}[thm]{Definition}
\newtheorem{nota}[thm]{Notation}
\newtheorem{notarem}[thm]{Notation and Remarks}
\newtheorem{example}[thm]{Example}
\newtheorem{remark}[thm]{Remark}
\newtheorem*{remark*}{Remark}
\newtheorem*{remarks*}{Remarks}
\newtheorem{hypoth}[thm]{Simplifying hypotheses}
\newtheorem{ab}[thm]{}
\newtheorem{notconv}[thm]{Notational Convention}
\newcommand{\Aut}{\operatorname{Aut}}
\newcommand{\mytrace}{\operatorname{trace}}
\newcommand{\Ric}{\operatorname{Ric}}
\newcommand{\nilrad}{\operatorname{Nilrad}}
\newcommand{\Der}{\operatorname{Der}}
\newcommand{\ad}{\operatorname{ad}}
\newcommand{\Ad}{\operatorname{Ad}}
\newcommand{\Rc}{\operatorname{Rc}}
\newcommand{\Id}{\operatorname{Id}}
\newcommand{\Real}{\operatorname{Real}}
\newcommand{\diff}{\operatorname{Diff}}
\newcommand{\abc}{\operatorname{ab}}
\newcommand{\la}{\langle}
\newcommand{\ra}{\rangle}
\newcommand{\Ip}{\la\,,\,\ra}
\newcommand{\s}{\mathfrak{s}}
\newcommand{\g}{\mathfrak{g}}
\newcommand{\n}{\mathfrak{n}}
\newcommand{\rf}{\mathfrak{r}}
\newcommand{\lf}{\mathfrak{l}}
\newcommand{\mf}{\mathfrak{m}}
\newcommand{\h}{\mathfrak{h}}
\newcommand{\q}{\mathfrak{q}}
\newcommand{\kf}{\mathfrak{k}}
\newcommand{\p}{\mathfrak{p}}
\newcommand{\m}{\mathfrak{m}}
\newcommand{\uf}{\mathfrak{u}}
\newcommand{\af}{\mathfrak{a}}
\newcommand{\cf}{\mathfrak{c}}
\newcommand{\bfrak}{\mathfrak{b}}
\newcommand{\gl}{\mathfrak{gl}}
\newcommand{\R}{\mathbf{R}}
\newcommand{\Z}{\mathbf{Z}}
\newcommand{\C}{\mathbf{C}}
\newcommand{\Gt}{\hat G}
\newcommand{\Lt}{\hat L}
\newcommand{\Ht}{\hat H}
\newcommand{\Mcal}{{\mathcal M}}
\newcommand{\Rcal}{{\mathcal R}}
\newcommand{\Scal}{{\mathcal S}}
\newcommand{\dercntwo}{\Der(\n_2)^{\mathfrak{a}_2 + \rho(\mathfrak s_1)}}
\newcommand{\Isom}{{\operatorname{Isom}}}
\begin{document}

\title[Einstein solvmanifolds have maximal symmetry]{Einstein solvmanifolds have maximal symmetry}
\author[C.S. Gordon]{Carolyn S. Gordon$^1$}
\address{Department of Mathematics, Dartmouth College, Hanover, NH 03755-1808, USA}
\email{carolyn.s.gordon@dartmouth.edu}
\thanks{$^1$Research partially supported by National Science Foundation
grant DMS-0906168}
\author[M.R. Jablonski]{Michael R. Jablonski$^2$}
\address{Department of Mathematics, University of Oklahoma, Norman, OK 73019-3103, USA}
\email{mjablonski@math.ou.edu}
\thanks{$^2$Research partially supported by National Science Foundation
grant DMS-1105647}

\maketitle

\makeatletter
\providecommand\@dotsep{5}
\makeatother

%
%

%

\begin{abstract}  All known examples of homogeneous Einstein metrics of negative Ricci curvature can be realized as left-invariant Riemannian metrics on solvable Lie groups.   After defining a notion of maximal symmetry among left-invariant Riemannian metrics on a Lie group, we prove that any left-invariant Einstein metric of negative Ricci curvature on a solvable Lie group is maximally symmetric.   This theorem is motivated  both by the Alekseevskii Conjecture and by the question of stability of Einstein metrics under the Ricci flow.   We also address questions of existence of maximally symmetric left-invariant Riemannian metrics more generally.
\end{abstract}

Einstein metrics have long held a distinguished place among Riemmanian metrics.  Even among homogeneous spaces, however, there is no classification of Einstein spaces despite decades of study.   (An exception is the case of homogeneous spaces of Ricci curvature zero; such homogeneous spaces are not only Ricci flat but actually flat.)
In the case of homogeneous Einstein spaces of positive Ricci curvature, there are so many examples that no classification is expected to exist.  For example, every compact semisimple Lie group of dimension $6$ or greater admits at least two such Einstein metrics, and many admit more than this.
However, in the setting of homogeneous Einstein spaces with negative Ricci curvature, there is considerably more structural rigidity; see, for example,\cite{Heber, LauretLafuente:StructureOfHomogeneousRicciSolitonsAndTheAlekseevskiiConjecture,
JP:TowardsTheAlekseevskiiConjecture}, and there is a reasonable hope for an eventual classfication.   
Presently, all known examples of homogeneous Einstein spaces of negative Ricci curvature are isometric to left-invariant metrics on solvable Lie groups, and there is mounting evidence that Einstein metrics on solvable Lie groups exhaust the class of homogeneous Einstein spaces with negative Ricci curvature.   

In this article, we study left-invariant Einstein metrics with negative Ricci curvature on solvable Lie groups and show that they satisfy a new, rather strong, geometric property:

\begin{thm}[\bf{Main Theorem}]\label{mt} If a solvable Lie group $S$ admits a left-invariant Einstein metric $g$ of negative Ricci curvature, then $g$ is maximally symmetric.
\end{thm}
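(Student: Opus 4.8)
\emph{Strategy.} The plan is to combine three things: Lauret's theorem that Einstein solvmanifolds are standard together with Heber's structure theory; the Gordon--Wilson description of isometry groups of solvmanifolds; and the Kempf--Ness theory of minimal vectors. Unwinding the definition, I must show that for every left-invariant metric $g'$ on $S$ the isometry group $\Isom(S,g')$ is, up to the natural equivalence, no larger than $\Isom(S,g)$; since $S$ acts transitively in both cases, this reduces to comparing the isotropy subgroups at the identity together with checking a structural embedding. Passing to the simply connected cover and using standardness (see \cite{Heber}), we may put $(S,g)$ in standard position: $\s=\af\oplus\n$ orthogonally, $\n=\nilrad(\s)=[\s,\s]$, $\af$ abelian acting on $\n$ by symmetric derivations, $g|_\n$ a nilsoliton with $\Ric_\n=c\,\Id+D$, $c<0$, $D\in\Der(\n)$, and after the rank-one reduction $\af\subseteq\R D$.

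\emph{An invariant-theoretic picture.} Fix $V=\s$ with the inner product $g_e$, and regard the bracket as a point $\mu$ of the variety of Lie brackets on $V$ with $\operatorname{GL}(V)$ acting by change of basis; the left-invariant metrics on $S$ then correspond (up to equivalence) to the points of $\operatorname{GL}(V)\cdot\mu$, and, once a metric is put in standard position, the isotropy of the corresponding metric at $e$ is the orthogonal stabilizer $\Stab_{O(V)}(\mu)$. By Gordon--Wilson, $\Isom(S,g')^{0}$ is an extension of $\Stab_{O(V)}(\mu')^{0}$ by a modification $S'$ of $S$ acting simply transitively, so $\dim\Isom(S,g')=\dim S+\dim\Stab_{O(V)}(\mu')$. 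The Einstein metric is singled out by Lauret's variational characterization of nilsolitons: its bracket $\mu_E$ is a \emph{minimal vector} of its $\operatorname{GL}(V)$-orbit. By Kempf--Ness, in Richardson--Slodowy's real form, the stabilizer of a minimal vector is invariant under the Cartan involution of $\operatorname{GL}(V)$, so $\Stab_{O(V)}(\mu_E)=\Aut(\s,\mu_E)\cap O(V)$ is a maximal compact subgroup of $\Aut(\s,\mu_E)$.

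\emph{Maximality.} For arbitrary $g'$, write $\mu'=h\cdot\mu$; then $\Stab_{O(V)}(\mu')$ is a compact subgroup of $\Aut(\s,\mu')=h\,\Aut(\s,\mu)\,h^{-1}$, hence lies in some maximal compact subgroup of it. Since $\Aut(\s,\mu')$ and $\Aut(\s,\mu_E)$ are conjugate in $\operatorname{GL}(V)$ and maximal compacts are conjugate, this maximal compact, and with it a conjugate of $\Stab_{O(V)}(\mu')$, is carried into $\Stab_{O(V)}(\mu_E)$. Therefore $\dim\Stab_{O(V)}(\mu')\le\dim\Stab_{O(V)}(\mu_E)$, whence $\dim\Isom(S,g')\le\dim\Isom(S,g)$, and the isotropy of $g'$ is placed inside that of $g$. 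To upgrade this to an embedding of $\Isom(S,g')$ into $\Isom(S,g)$ compatible with the simply transitive $S$-actions, I would analyze the Gordon--Wilson modification $S'$ attached to $g'$: its reductive part is governed by $\Stab_{O(V)}(\mu')$, which we have just seated inside $\Isom(S,g)_e$, and its nilpotent part is controlled by the nilradical $N$ shared by $S$ and $S'$; reassembling these, using that in standard position $S$ is normal in $\Isom(S,g)^{0}$ with reductive complement a full maximal compact of $\Aut(\s)$, produces the embedding, with a separate short argument for the component group.

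\emph{The main obstacle.} The delicate point is that last step, controlling the modifications. A priori $\Isom(S,g')$ can contain a simply transitive solvable subgroup $S'$ quite unlike $S$ --- precisely what happens for flat and almost-flat left-invariant metrics --- and one must show that whatever extra symmetry $S'$ exploits is already available inside the isometry group of the Einstein metric. This requires knowing exactly how the Einstein metric sits inside its isometry group and matching the algebraic datum defining $S'$ against that structure, and it is here that standardness and the rank-one reduction are essential. By contrast the nilpotent case, and more generally the bound on the compact isotropy coming from the minimal-vector dictionary, is comparatively routine.
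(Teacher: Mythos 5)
There is a genuine gap, and it sits at the foundation of your ``invariant-theoretic picture.'' You identify the isotropy subgroup of $\Isom(S,g')$ at $e$ with the orthogonal stabilizer $\Stab_{O(V)}(\mu')=\Aut(\s,\mu')\cap O(V)$ and deduce $\dim\Isom(S,g')=\dim S+\dim\Stab_{O(V)}(\mu')$. This is false for the groups at issue. By Gordon--Wilson, $S\rtimes\Aut_{\operatorname{orth}}(S,g')$ is only the \emph{normalizer} of $S$ in the isometry group; the full isotropy can be strictly larger, and for Einstein solvmanifolds it typically is, because the isometry group contains a semisimple Levi factor $G_1$ whose maximal compact $K_1$ contributes isotropy that is not visible as orthogonal automorphisms of $\s$. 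Already for the hyperbolic plane realized as the $ax+b$ group, $\Aut_{\operatorname{orth}}$ is finite while the isotropy is $SO(2)$, so your formula gives $2$ instead of $3$. Consequently the whole ``maximality'' step --- seating $\Stab_{O(V)}(\mu')$ inside a maximal compact of $\Aut(\s,\mu_E)$ via Kempf--Ness --- bounds only the orthogonal-automorphism part of the isotropy, not the isotropy itself, and cannot yield the required containment $\Isom(S,g')\subset\Isom(S,\psi^*g)$. (Your appeal to minimal vectors is also imprecise as stated: the full $GL(V)$-orbit of a nonabelian nilpotent bracket is never closed, since $0$ lies in its closure under scaling; the correct closed-orbit statement is for Nikolayevsky's subgroup $G_\varphi$, not $GL(V)$.)

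You do flag this difficulty in your ``main obstacle'' paragraph, but the proposed resolution (``its reductive part is governed by $\Stab_{O(V)}(\mu')$, which we have just seated inside $\Isom(S,g)_e$'') presupposes exactly the false identification above. The paper's actual resolution is the missing content: one passes to a simply transitive solvable group in standard position, which decomposes as $S=S_1\ltimes S_2$ with $S_1$ an Iwasawa subgroup of a Levi factor $G_1$ of the isometry group and $S_2$ a characteristic subgroup of the radical; one then proves the Key Lemma that $S_2$ is itself of Einstein type --- this is where the genuine invariant theory enters, via inheritance of orbit closedness from $G_\varphi\cdot\mu$ to $G_{\varphi_2}\cdot\mu_2$ using Luna's results on centralizers of stabilizers --- and finally one reassembles a $\Gt$-invariant Einstein metric on $\Gt/\Lt$ by an explicit Ricci curvature computation that rescales the Killing-form metric on each simple factor of $\p_1$ by constants $\alpha_i=(-1-\beta_i)/c$. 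A dimension inequality, even if you had it, would not substitute for this construction: maximal symmetry demands an honest embedding of the full (possibly disconnected) isometry group of $g'$ into that of an automorphism-pullback of $g$, which the paper obtains precisely by exhibiting an Einstein metric invariant under all of $\Isom(S,g')$.
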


The notion of maximal symmetry is defined in Definition~\ref{def.maxsym} below.  

The Main Theorem is motivated both by an interest in understanding maximally symmetric Riemannian metrics and by questions concerning homogeneous Einstein metrics and Ricci flow.  We will address each of these motivations in this introductory section.

\subsection*{Maximally symmetric Riemannian metrics}
Given the implications of geometry on the topology of a Riemannian manifold, it is natural to ask whether a given manifold admits a distinguished metric and to address the geometry of any such metric.   
As a first attempt at seeking a distinguished metric, let's say that a Riemannian metric $g$ on a manifold $M$ has \emph{maximal symmetry} if for any other Riemannian metric $h$ on $M$ we have
	$$ \Isom(M,h) \subset \Isom(M,\psi^*g),  $$
for some diffeomorphism $\psi \in \diff(M)$. Here $ \Isom(M,h)$ is the full isometry group of $(M,h)$.   With this notion, the maximally symmetric metrics on the 2-sphere are, as expected, precisely the constant curvature metrics.    This fact may be deduced from the works of H.~Poincar\'e and L.E.J.~Brouwer; however, we provide a brief justification here using modern techniques.  Applying the (normalized) Ricci flow to any metric on $S^2$, one has that the Ricci flow converges to a metric of constant curvature \cite{Chow:TheRicciFlowOnThe2Sphere}.  Furthermore, the Ricci flow preserves isometries  \cite{Chen-Zhu:UniquenessRicciFlowCompleteNoncompact} and so the isometry group of the initial metric acts isometrically on the limit metric.  As the round metric is unique up to scaling and diffeomorphism, we conclude that it has maximal symmetry.

 Interestingly, this result does not hold for all spheres as there exist finite groups acting on the sphere $S^n$ which cannot be realized as subgroups of $O(n+1)$. Thus, higher-dimensional spheres do not admit maximally symmetry metrics in this sense.  This failure suggests that the notion of maximal symmetry above is too demanding; perhaps one needs to restrict attention to a subclass of all the Riemannian metrics, for example.

In the setting of Lie groups, we define a notion of maximal symmetry among left-invariant metrics:  

\begin{defin}\label{def.maxsym} A left-invariant Riemannian metric $g$ on a Lie group $G$ will be said to be \emph{maximally symmetric} if for any other left-invariant Riemannian metric $h$ on $M$ we have
	$$ \Isom(M,h) \subset \Isom(M,\psi^*g),  $$
for some $\psi\in \Aut(G)$.
\end{defin}

This definition appears to be more robust.  While it is still the case that not every Lie group admits a maximally symmetric left-invariant Riemannian metric, large classes of Lie groups do.   Two questions naturally arise:   (i) Which Lie groups admit maximally symmetric left-invariant metrics?  (ii) Are left-invariant metrics that are geometrically distinguished --e.g., are those with special curvature properties--maximally symmetric?

We briefly address question (i) in Section 1.   The primary focus of this paper is the Main Theorem above, which is an instance of question (ii).



\subsection*{Einstein motivations}  The Main Theorem is partially motivated by the well-known Alekseevskii Conjecture and stability questions for the Ricci flow around Einstein metrics.

The Alekseekskii Conjecture asserts that  every connected homogeneous Einstein manifold of negative Ricci curvature is diffeomorphic to $\R^n$.  An only slightly stronger version of the conjecture is that every such Einstein manifold is a solvmanifold, i.e., it is isometric to a solvable Lie group with a left-invariant Riemannian metric.  A major hurdle to proving the conjecture is to show that if a homogeneous space $G/K$ of a semisimple Lie group $G$ admits a left-invariant Einstein metric of negative Ricci curvature, then $K$ must be a maximal compact subgroup of $G$ (in which case $G/K$ is a symmetric space and is isometric to a solvmanifold, since an Iwasawa subgroup of $G$ acts simply transitively).  Recently, P. Petersen and the second author  \cite[Corollary 1.3]{JP:TowardsTheAlekseevskiiConjecture} showed that if $G$ is semisimple of noncompact type and $G/K$ admits a left-invariant Einstein metric $g$, then the identity component $\Isom_0(G,g)$ of the full isometry group consists only of $G$ itself.  Thus any counterexample to the Alekseevskii Conjecture of this form would in a sense be \emph{minimally} symmetric.  In particular, it would have minimal possible isotropy algebra among all possible left-invariant metrics on $G/K$.   In contrast, the Main Theorem asserts that Einstein metrics of negative Ricci curvature on solvable Lie groups have maximal possible isotropy.

A second motivation for the Main Theorem above comes from studying the Ricci flow.  It is an open question whether Einstein metrics on solvable Lie groups are stable under the Ricci flow \cite{Arroyo:TheRicciFlowInAClassOfSolvmanifolds,
JPW:LinearStabilityOfAlgebraicRicciSolitons,
WilliamsWu:DynamicalStabilityOfAlgebraicRicciSolitons}.  If such spaces were stable, then one would be able to deduce that (locally) their isometry groups are maximal.

\subsection*{Outline of the proof of Main Theorem
~\ref{mt}.}  We begin with an arbitrary left-invariant metric $h$ on $S$ and let $G$ be the full isometry group and $L$ the isotropy subgroup.   The theorem is equivalent to the statement that $G/L$ admits a $G$-invariant Einstein metric, which is in turn equivalent to the condition that \emph{some} simply transitive solvable subgroup of $G$ admits a left-invariant Einstein metric whose isometry group includes $G$.   

At the outset, we replace the original solvable group by one (that we will continue to denote by $S$ in this introductory 
outline) in so-called ``standard position'' in $G$.   Using results of \cite{Heber,LauretStandard} concerning the structure of Einstein solvmanifolds along with results of \cite{GordonWilson:IsomGrpsOfRiemSolv} concerning isometry groups of solvmanifolds, we show that $S$ can be written as a semi-direct product $S=S_1\ltimes S_2$,  where $S_1$ is an Iwasawa subgroup of a semisimple Levi factor of $G$ and $S_2=S\cap \operatorname{Rad}(G)$.  Moreover, this new solvable group $S$ admits an Einstein metric isometric to the one on the original solvable group.

The key step in proving that this Einstein metric is $G$-invariant is to prove the following lemma, which is perhaps of interest in its own right:

\begin{lemma}\label{lem} Suppose that $S$ is a semi-direct product $S=S_1\ltimes S_2$ of solvable Lie groups satisfying the following hypotheses:
\begin{itemize} 
\item $S_1$ isomorphically embeds as an Iwasawa subgroup in a semisimple Lie group $G_1$.   
\item The adjoint action of $S_1$ on the Lie algebra $\operatorname{Lie}(S_2) $ extends to a representation of $G_1$ on $\operatorname{Lie}(S_2)$.
\end{itemize}
Then, $S$ admits a left-invariant Einstein metric of negative Ricci curvature if and only if $S_2$ does.  In this case, the Einstein metric $g$ on $S$ may be chosen so that its restriction to $S_2$ is Einstein, its restriction to $S_1$ is symmetric, and the Lie algebras of $S_1$ and $S_2$ are orthogonal.
\end{lemma}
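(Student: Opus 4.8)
The plan is to exploit the structure theory of Einstein solvmanifolds, and of noncompact semisimple Lie groups, to build the Einstein metric on $S$ out of an Einstein metric on $S_2$ by a controlled construction, and conversely to read off an Einstein metric on $S_2$ from one on $S$ by an algebraic averaging/restriction argument. Write $\s = \s_1 \oplus \s_2$ for the Lie algebra of $S$, with $\s_1 = \operatorname{Lie}(S_1)$, $\s_2 = \operatorname{Lie}(S_2)$, and let $\rho\colon G_1 \to \operatorname{Aut}(\s_2)$ denote the representation extending the adjoint action of $S_1$. Let $G_1 = K_1 A_1 N_1$ be an Iwasawa decomposition with $S_1 = A_1 N_1$. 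The crucial extra structure is that $\rho$ is a representation of the \emph{full} group $G_1$, not merely of $S_1$; this is precisely what will let us average over the maximal compact $K_1$ and use the representation theory of real reductive groups.

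First, for the ``if'' direction I would start from an Einstein metric $g_2$ on $S_2$, which by Heber's and Lauret's structure results (\cite{Heber,LauretStandard}) we may assume is a standard solvmanifold metric; in particular its metric solvable Lie algebra has a distinguished nilpotent part $\n_2 = \operatorname{Nilrad}(\s_2)$ and a grading-type derivation. The symmetric space $G_1/K_1$ carries its canonical Einstein metric $g_1$ of negative curvature, realized as a left-invariant metric on $S_1 = A_1 N_1$. The construction is to put on $\s = \s_1 \oplus \s_2$ the orthogonal direct sum inner product $t\, g_1 \oplus g_2$ for a scaling parameter $t>0$ to be chosen, after first arranging (by averaging $g_2$ over the compact group $\rho(K_1)$ acting on $\s_2$, which is possible because $\rho$ extends to $G_1$) that $g_2$ is $\rho(K_1)$-invariant; one must check this averaged metric is still Einstein on $S_2$, which holds because $\rho(K_1)$ normalizes $S_1$'s action and acts by automorphisms preserving the relevant nilpotent structure, so it permutes Einstein metrics within the known finite-dimensional moduli and averaging stays in the Einstein locus (here one invokes the uniqueness up to isometry and scaling of Einstein solvmanifold metrics, again from \cite{Heber,LauretStandard}). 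Then I would compute the Ricci tensor of $t\, g_1 \oplus g_2$ using the standard formula for the Ricci curvature of a left-invariant metric on a semidirect product: it splits into the Ricci term of $g_1$ on $\s_1$, the Ricci term of $g_2$ on $\s_2$, a ``moment-map'' correction on $\s_2$ coming from the representation $\rho$ of $S_1$, and mixed terms that vanish by the $\rho(K_1)$-invariance and the fact that $S_1 = A_1 N_1$ is unimodular-free in the appropriate sense. The key point is that the correction term on $\s_2$ is, by Schur-type considerations applied to the $G_1$-representation $\rho$, a \emph{scalar} multiple of the identity on each isotypic component, and in fact a single scalar $c(t)$ on all of $\s_2$ after a further rescaling within the moduli of Einstein metrics on $S_2$; one then solves the single scalar equation matching the Einstein constants of the $\s_1$ and $\s_2$ blocks by choosing $t$, and absorbs $c(t)$ by rescaling $g_2$. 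This yields an Einstein metric $g$ on $S$ with $g|_{S_2}$ Einstein, $g|_{S_1}$ symmetric, and $\s_1 \perp \s_2$, as claimed.

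For the ``only if'' direction, suppose $S$ admits a left-invariant Einstein metric $g$ of negative Ricci curvature. By Heber's structure theory $g$ is (isometric to) a standard solvmanifold metric, and by the Gordon--Wilson analysis of isometry groups of solvmanifolds together with the semisimple structure coming from the Iwasawa embedding $S_1 \hookrightarrow G_1$, the isometry group of $(S,g)$ contains a copy of $G_1$ acting with the symmetric space $G_1/K_1$ as an orbit-type factor; concretely, one first moves $g$ into standard position so that the metric solvable algebra decomposes compatibly with $\s = \s_1 \oplus \s_2$ and with $\s_1 \perp \s_2$ and $g|_{\s_1}$ symmetric (this uses that $S_1$ being an Iwasawa subgroup forces its metric contribution to be the symmetric one, by rigidity of the Einstein condition restricted to the Levi-type part). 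Then the restriction $g|_{\s_2}$, together with the solvable algebra structure of $\s_2$, must itself satisfy the Einstein condition: the Ricci formula for the semidirect product, evaluated on $\s_2$, shows that $\Ric_g|_{\s_2} = \Ric_{g|_{S_2}} + (\text{the }\rho\text{-correction}) = \lambda\, g|_{\s_2}$, and since the $\rho$-correction is again scalar (Schur on the $G_1$-module $\s_2$), $\Ric_{g|_{S_2}}$ is itself a scalar multiple of $g|_{\s_2}$, i.e.\ $g|_{S_2}$ is Einstein; negativity of its Ricci curvature follows because $S_2$ is solvable and admits an Einstein metric, hence that constant is negative (the zero case being excluded since it would force flatness, incompatible with a nontrivial nilradical).

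\medskip

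The main obstacle, and the technical heart of the argument, is establishing that the correction term that the $S_1$-action contributes to the Ricci curvature of $\s_2$ is a \emph{scalar} operator and that it interacts with the Einstein condition on $\s_2$ coherently as the scaling parameter $t$ varies --- in other words, reducing the coupled Einstein system on $\s = \s_1 \oplus \s_2$ to a single scalar equation in $t$. This is exactly where the hypothesis that $\rho$ extends from $S_1$ to all of $G_1$ is indispensable: for a general representation of the solvable group $S_1$ alone there is no Schur's lemma and the correction term need not be scalar, but for a representation of the reductive group $G_1$ the isotypic decomposition is available, and after the $\rho(K_1)$-averaging the correction becomes $G_1$-equivariant and hence scalar on irreducibles; controlling the \emph{relative} scalars across different irreducible summands --- showing they can be simultaneously normalized by staying within the (known, finite-dimensional) moduli space of Einstein metrics on $S_2$ --- is the delicate step and will require a careful invocation of the uniqueness results of \cite{Heber,LauretStandard} for Einstein solvmanifolds. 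A secondary subtlety is verifying that the mixed $\s_1$--$\s_2$ block of the Ricci tensor vanishes, which I expect to follow from $\rho(K_1)$-invariance combined with the nilpotency structure but should be checked against the Gordon--Wilson normal form.
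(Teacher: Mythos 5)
The essential difficulty in this lemma is the ``only if'' direction, and your argument for it does not work. You propose to move the Einstein metric $g$ on $S$ ``into standard position so that $\s_1\perp\s_2$ with $g|_{\s_1}$ symmetric'' and then read off the Einstein equation on $\s_2$ from a semidirect-product Ricci formula. But the standard decomposition of $\s$ is $\af+\n$ with $\n=\n_1+\n_2$ and does not respect the splitting $\s_1\oplus\s_2$; arranging that the Einstein metric is adapted to $S_1\ltimes S_2$ (orthogonal splitting, $\rho(\p_1)$ acting symmetrically on $\n_2$, $g|_{\n_2}$ a nilsoliton) is essentially equivalent to the conclusion you are trying to prove, so the argument is circular. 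The ``Schur'' step is also unjustified: $\s_2$ is not an irreducible $G_1$-module in general, and the correction term in the Ricci comparison is at best $K_1$-equivariant, not $G_1$-equivariant; moreover, in the clean adapted situation (the Lauret--Lafuente computation the paper uses), the discrepancy $\mytrace(\rho(X)\rho(Y))$ appears on the $\p_1$-block while on $\s_2\times\s_2$ the two Ricci tensors agree \emph{exactly} (Equation~\ref{rc2}) --- so there is no scalar correction on $\s_2$ to absorb, and nothing in the pointwise Ricci computation forces $g|_{S_2}$ to be Einstein for an arbitrary Einstein $g$ on $S$. The actual content of the ``only if'' statement is that $N_2=\nilrad(S_2)$ inherits a nilsoliton from the nilsoliton on $\nilrad(S)$; by Nikolayevsky's criterion (Proposition~\ref{prop.nik2}) this is the assertion that the orbit $G_{\varphi_2}\cdot\mu_2$ is closed, which must be extracted from the closedness of $G_\varphi\cdot\mu$ via Luna's theorems and a careful inheritance-of-closed-orbits argument (Sections~\ref{sec: technical lemmas on GIT} and~\ref{sec: proof of key lemma}). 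Such inheritance is false for general subgroups, and Section~\ref{exts} exhibits $S=S_1\ltimes N_2$ Einstein with $N_2$ admitting no soliton when the extension hypothesis fails --- a hypothesis your restriction argument never genuinely engages.

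Your ``if'' direction is closer to the paper's Proposition~\ref{prop.key} in spirit (block-orthogonal metric, symmetric metric on the $G_1/K_1$ factor, invariant-bilinear-form analysis of the coupling term), but it has two gaps. First, averaging an Einstein metric over $\rho(K_1)$ need not yield an Einstein metric: the Einstein condition is not convex, and uniqueness up to isometry and scaling does not imply that an average of mutually isometric metrics remains in the Einstein locus. The paper instead produces a $K_1$-invariant Einstein metric directly from Proposition~\ref{prop.step1} and then conjugates by an automorphism so that the Cartan decomposition acts orthogonally/symmetrically (see~\ref{lem.com}). Second, a single scaling parameter $t$ on $\s_1$ does not suffice when $\g_{nc}$ has several simple ideals: the invariant form $T(X,Y)=\mytrace(\rho(X)\rho(Y))$ equals a different multiple $\beta_i$ of the Killing form on each ideal $\h_i$, and one needs one scaling $\alpha_i=(-1-\beta_i)/c$ per ideal (Equation~\ref{alpha}) to match all the Einstein constants.
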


The ``if'' statement and the final statement are elementary.  On the other hand, the crucial  ``only if'' statement exploits the deep relationship between left-invariant Einstein metrics of negative Ricci curvature on solvable Lie groups and geometric invariant theory.  This relationship first appeared in the work of Heber \cite{Heber} and was subsequently refined by Lauret \cite{LauretStandard} and Nikolayevsky \cite{Nikolayevsky:EinsteinSolvmanifoldsandPreEinsteinDerivation}.  The connection with geometric invariant theory arises as follows:  The question of existence of an Einstein metric on a solvable group $S$ reduces to the question of existence of a nilsoliton metric on the nilradical $N$ of $S$.  Denoting the Lie bracket of Lie$(N)$ by $\mu$, we may identify Lie$(N)$ with $\{ \mathbb R^n , \mu \}$  and view $\mu$ as an element of $V = \wedge (\mathbb R^n)^* \otimes \mathbb R^n$, the space of brackets on $\mathbb R^n$.  The group $GL_n\mathbb R$ acts on $V$ and there exists a naturally defined subgroup $G_\phi \subset GL_n\mathbb R$ with the following property:
	$$\mbox{The orbit } G_\phi \cdot \mu \subset V \mbox{ is closed if and only if } N \mbox{ admits a nilsoliton metric.}     $$
In the notation of Lemma~\ref{lem}, the nilradical $N_2$ of $S_2$ is a normal subgroup of the nilradical $N$ of $S$.  Denote by $\mu_2$ the Lie bracket of Lie$(N_2) \subset \operatorname{Lie}(N)$, and denote the associated group by $G_{\phi_2}$.  To prove the ``only if'' statement, assume that $S$ admits a left-invariant Einstein metric of negative Ricci curvature.  We show that $G_{\phi_2} \cdot \mu_2$ inherits the topological property of being closed from the analogous property of the orbit $G_\phi\cdot\mu$.  In this way, $N_2$ obtains a nilsoliton metric and then $S_2$ obtains an Einstein metric.

The completion of the proof of the Main Theorem, given the Lemma, proceeds as follows:  Since we have already established the existence of an Einstein metric on $S$, the forward direction of the lemma gives us an Einstein metric on $S_2$ and a nilsoliton metric on the nilradical $N_2$ of $S_2$.  Using the fact that nilsoliton metrics on nilpotent Lie groups are maximally symmetric (see Section \ref{sec: state of knowledge on max symm on Lie groups}) and that $S_2$ and $N_2$ are normal in $G$, we find that some nilsoliton metric on $N_2$ -- and the resulting Einstein metric on $S_2$ -- are $\Ad(L)$-invariant and further $\Ad(G_1)$ acts by self-adjoint transformations on Lie$(N_2)$.  It is then straightforward to show that the extension of this metric on $S_2$ to an Einstein metric on $S$, given in the easier direction of the Lemma, is $G$-invariant.  (This Einstein metric on $S$ may differ from the original one by an automorphism).

\subsection*{Organization}  The paper is organized as follows:   In Section \ref{sec: state of knowledge on max symm on Lie groups} we address maximal symmetry metrics on Lie groups.  The structure theory of Einstein solvmanifolds and their automorphism groups is reviewed in Section~\ref{einstauts}.   Section~\ref{main} contains   the proof of the Main Theorem~\ref{mt} modulo Lemma~\ref{lem}.   The proof of the lemma is presented in Section \ref{sec: proof of key lemma} after first addressing the prerequisite Geometric Invariant Theory in Section \ref{sec: technical lemmas on GIT}.   The question of Einstein extensions is further addressed in Section~\ref{exts}.

\section{Existence of maximally symmetric left-invariant metrics}\label{sec: state of knowledge on max symm on Lie groups}

In this section we address the question of which Lie groups admit maximally symmetric left-invariant Riemannian metrics, as defined in Definition~\ref{def.maxsym}.

\begin{prop}\label{prop.normal} Suppose $G$ satisfies:
\begin{enumerate}
\item $\Aut(G)$ has only finitely many components.   (This condition is always satisfied if $G$ is simply-connected.)
\item For every left-invariant metric $h$ on $G$, $\Isom(G,h)< G\rtimes \Aut(G)$, where $\Isom(G,h)$ is the full isometry group of $h$.
\end{enumerate}
Then $G$ admits maximally symmetric left-invariant Riemannian metrics.
\end{prop}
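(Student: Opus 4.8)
The plan is to produce, for a group $G$ satisfying (1) and (2), a single left-invariant metric whose isometry group contains (a conjugate of) the isometry group of every other left-invariant metric. The natural strategy is a maximality/compactness argument: among all left-invariant metrics $h$, consider the isotropy representations $\Isom(G,h)_e \hookrightarrow \Aut(G)$ — equivalently the compact subgroups $L_h = \Stab_{\Aut(G)}(h)$ arising as stabilizers of inner products on $\g = \operatorname{Lie}(G)$ under the natural $\Aut(G)$-action on the symmetric space of inner products. By hypothesis (2), $\Isom(G,h) \leq G \rtimes \Aut(G)$, so the isotropy group at the identity is (conjugate into) such an $L_h$, and it suffices to find a left-invariant metric $g$ whose stabilizer $L_g$ contains a conjugate (by an automorphism) of $L_h$ for every $h$. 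So the real content is: \emph{among the compact subgroups of $\Aut(G)$ of the form $L_h$, there is one that is maximal up to conjugacy, and it is realized by some left-invariant metric.}

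First I would set up the space $\mathcal{P}$ of inner products on $\g$, on which $\Aut(G)$ acts, and observe $L_h$ is the (compact, since it fixes an inner product) isotropy group at $h \in \mathcal P$. Next I would invoke hypothesis (1): since $\Aut(G)$ has finitely many components, its maximal compact subgroups are all conjugate (Cartan--Iwasawa--Malcev), and every compact subgroup is contained in a maximal compact one; fix a maximal compact $K \leq \Aut(G)$. Each $L_h$ is conjugate in $\Aut(G)$ to a subgroup of $K$. The key step is then to show that $K$ itself is realized as $L_g$ for some left-invariant metric $g$: this is a standard averaging argument — take any inner product on $\g$ and average it over the compact group $K$ (using Haar measure) to get a $K$-invariant inner product $g$; then $K \leq L_g$, and since $L_g$ is compact hence conjugate into some maximal compact, and contains the maximal compact $K$, we get $L_g = K$ up to the ambient conjugacy. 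Finally, for an arbitrary $h$, pick $\psi \in \Aut(G)$ with $\psi L_h \psi^{-1} \leq K = L_g$; translating through the correspondence between $L$-invariance of inner products and isometries, this yields $\Isom(G,h) \subset \Isom(G, \psi^* g)$, which is exactly Definition 1.2.

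The step I expect to be the main obstacle — and the reason hypotheses (1) and (2) are both needed — is bridging carefully between the \emph{abstract} isometry group $\Isom(G,h)$ and the subgroup $L_h \leq \Aut(G)$. Hypothesis (2) guarantees $\Isom(G,h)$ sits inside $G \rtimes \Aut(G)$, so that its isotropy is a subgroup of $\Aut(G)$; but one must still check that this isotropy subgroup, when conjugated by an element of $\Aut(G)$, corresponds to replacing $g$ by $\psi^*g$ on the nose (i.e. that the $G \rtimes \Aut(G)$-structure interacts correctly with pullback of metrics), and that containment of isotropy groups plus the fact that both isometry groups contain $G$ upgrades to containment of the \emph{full} isometry groups. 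Hypothesis (1) is what makes ``maximal compact up to conjugacy'' a well-defined single target rather than a moving one; without finitely many components one could have an increasing chain of non-conjugate compact subgroups and no maximally symmetric metric. I would handle the bridging by recording, as a preliminary lemma, the precise dictionary: a left-invariant metric $h$ and its isometry group decomposition $\Isom(G,h)_0 = G \cdot L_h^0$ with $L_h$ acting on $G$ by automorphisms fixing $e$, together with the transformation rule $L_{\psi^* h} = \psi^{-1} L_h \psi$ — after which the proposition follows formally from the averaging/maximal-compact argument above.
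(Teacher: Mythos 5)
Your proposal is correct and follows essentially the same route as the paper: fix a maximal compact $K\le\Aut(G)$, average to obtain a $K$-invariant left-invariant metric $g$, use hypothesis (ii) to write $\Isom(G,h)=G\rtimes L_h$ with $L_h$ compact in $\Aut(G)$, and use hypothesis (i) (conjugacy of maximal compacts) to conjugate $L_h$ into $K$. The ``bridging'' verifications you flag (the rule $L_{\psi^*h}=\psi^{-1}L_h\psi$ and the upgrade from isotropy containment to containment of full isometry groups) are exactly the routine steps the paper leaves implicit, and the claim that $L_g$ actually equals $K$ is harmless but unnecessary --- $K\le L_g$ suffices.
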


The second hypothesis is equivalent to the condition that $G$  (more precisely, the group of left-translations of $G$) is a normal subgroup of $\Isom(G,h)$ for every choice of $h$.

\begin{proof}

  Let $K<\Aut(G)$ be a maximal compact subgroup of $\Aut(G)$, and let $g$ be a $K$-invariant, left-invariant Riemannian metric.   For $h$ any left-invariant metric, the first hypothesis implies that $\Isom(G,h)= G\rtimes L$ for some compact subgroup $L<\Aut(G)$.   The first hypothesis guarantees that all maximal compact subgroups of $\Aut(G)$ are conjugate.  Thus there exists $\tau\in\Aut(G)$ such that $\tau L\tau^{-1}<K$, and we have $\Isom(G,h)<\Isom(G,\tau^*g)$.
\end{proof}

\begin{cor}\label{cor.pos} Let $G$ be a connected Lie group satisfying any one of the following conditions:
\begin{itemize}
\item $G$ is compact and simple.
\item $G$ is semisimple of noncompact type.  
\item $G$ is a simply-connected, completely solvable unimodular Lie group.  (E.g., all simply-connected nilpotent Lie groups satisfy this condition.)

\end{itemize}
Then $G$ admits maximally symmetric left-invariant Riemannian metrics.
\end{cor}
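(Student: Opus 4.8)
The plan is to deduce the Corollary from Proposition~\ref{prop.normal}: for each of the three families of groups I would verify hypotheses (1) and (2) of that Proposition, citing the known structure theory for isometry groups of left-invariant metrics. Recall, as noted just after Proposition~\ref{prop.normal}, that hypothesis (2) is equivalent to the statement that the left-translation group $L(G)$ is normal in $\Isom(G,h)$ for every left-invariant metric $h$; this reformulation is what I would actually check. An immediate bonus of this reduction is that it is automatically clean: if $L(G)\triangleleft\Isom(G,h)$ and $\varphi\in\Isom(G,h)$ fixes $e$, then $\varphi L_a\varphi^{-1}=L_{\alpha(a)}$ for an automorphism $\alpha$ of $G$ and $\varphi=\alpha$, so the whole isometry group sits inside $G\rtimes\Aut(G)$.

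Hypothesis (1) is handled uniformly. For a simply-connected, completely solvable unimodular $G$ (in particular for a simply-connected nilpotent group) it is the parenthetical remark in Proposition~\ref{prop.normal}: $\Aut(G)\cong\Aut(\g)$ is the group of real points of an algebraic group and hence has finitely many components. For $G$ compact simple or semisimple of noncompact type, the identity component $\Aut(G)^0$ is the group of inner automorphisms and has finite index in $\Aut(G)$, since the outer automorphism group of a semisimple Lie algebra is finite. For hypothesis (2) in the two semisimple cases I would invoke the classical results of Ochiai and Takahashi on isometry groups of left-invariant metrics: when $G$ is semisimple of noncompact type one has $\Isom_0(G,h)=L(G)$, which is then the identity component and hence normal in $\Isom(G,h)$; when $G$ is compact and simple the same circle of ideas shows $L(G)$ is a normal (in fact simple-ideal) subgroup of $\Isom(G,h)$, so again hypothesis (2) holds.

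The remaining, and principal, case is $G$ simply-connected, completely solvable, and unimodular, where hypothesis (2) is exactly where the work lies. Here I would appeal to the Gordon--Wilson analysis of isometry groups of Riemannian solvmanifolds \cite{GordonWilson:IsomGrpsOfRiemSolv} (for $G$ nilpotent this is Wilson's earlier theorem that $\Isom(G,h)=L(G)\rtimes(\Aut(G)\cap O(\g))$): their ``standard position/modification'' machinery shows that for a completely solvable \emph{unimodular} $G$ the left-translation group is already normal in $\Isom(G,h)$ for every left-invariant $h$. The main obstacle is precisely that this is not elementary, and the unimodularity hypothesis is genuinely needed, not a convenience: the $ax+b$ group is a simply-connected, completely solvable, \emph{non}-unimodular Lie group every left-invariant metric on which is, up to scaling, the hyperbolic metric, and its left-translation group is not normal in the isometry group $\operatorname{PSL}(2,\R)$. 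Once hypotheses (1) and (2) are confirmed in all three cases, Proposition~\ref{prop.normal} yields the Corollary directly.
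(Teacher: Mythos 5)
Your overall strategy is exactly the paper's: reduce to Proposition~\ref{prop.normal} and cite the literature for hypothesis (2) case by case (the paper's proof is precisely this, attributing the three cases to Ochiai--Takahashi, Gordon \cite{G79}, and Gordon--Wilson \cite{GordonWilson:IsomGrpsOfRiemSolv} respectively, with Wilson for the nilpotent subcase). Your treatment of hypothesis (1) and of the completely solvable unimodular case is fine, and your remark about the $ax+b$ group correctly identifies why unimodularity is essential.

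However, your justification of hypothesis (2) in the semisimple noncompact case contains a genuine error: it is \emph{not} true that $\Isom_0(G,h)=L(G)$ for every left-invariant metric $h$ on a semisimple group $G$ of noncompact type. Take $G=\widetilde{SL}(2,\R)$ with the left-invariant metric $h$ induced by an $\Ad(K)$-invariant inner product on $\mathfrak{sl}(2,\R)$; as recorded in Example~\ref{ex.sl}, $\Isom_0(G,h)=(\widetilde{SL}(2,\R)\times\tilde K)/D$, which strictly contains the left translations (right translations by elements of $K$ are additional isometries). So the inference ``$L(G)$ is the identity component, hence normal'' collapses. The conclusion you need --- that $L(G)$ is normal in $\Isom(G,h)$, equivalently $\Isom(G,h)<G\rtimes\Aut(G)$ --- is still true, but it is a nontrivial theorem of Gordon \cite{G79}, not a consequence of minimality of the isometry group. (Note that in the displayed example the extra isometries do lie in $G\rtimes\Aut(G)$, since $R_{b^{-1}}=L_{b^{-1}}\circ\operatorname{Inn}_b$.) The statement $\Isom_0(G,g)=G$ that you may have had in mind holds for \emph{Einstein} metrics on such $G/K$ by the Jablonski--Petersen result quoted in the introduction, not for arbitrary left-invariant metrics. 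A similar caveat applies to your compact simple case: ``the same circle of ideas'' is not available, since bi-invariant metrics already have $\Isom_0$ strictly larger than $L(G)$; what Ochiai--Takahashi actually supply is the containment $\Isom(G,h)<G\rtimes\Aut(G)$ itself, which one must cite as such. With the citations corrected to carry the full weight of hypothesis (2), your argument matches the paper's.
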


\begin{proof}  We apply Proposition~\ref{prop.normal}.   The first hypothesis of the proposition is trivially satisfied in all three cases.    The fact that each of these types of Lie groups satisfy the second hypothesis is proven in \cite{Ochiai-Takahashi},  \cite{G79}, and \cite{GordonWilson:IsomGrpsOfRiemSolv}, respectively.   (See also \cite{Wilson} for the special case of nilpotent Lie groups.)
\end{proof}

In some cases included in Corollary~\ref{cor.pos}, one can identify a maximally symmetric left-invariant metric.  For compact simple Lie groups, the maximally symmetric left-invariant metrics are precisely the bi-invariant metrics.   For semisimple Lie groups of non-compact type, the first author has shown that the natural metric coming from the Killing form is maximally symmetric, see \cite{G79}.  The second author proved the following:

\begin{prop}\cite{Jablo:ConceringExistenceOfEinstein}\label{completely_solv} If a completely solvable unimodular Lie group admits a left-invariant Ricci soliton metric $g$, then $g$ is maximally symmetric.
\end{prop}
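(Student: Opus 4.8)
The plan is to prove that the isotropy subgroup of $(G,g)$ is a \emph{maximal} compact subgroup of $\Aut(G)$, and then to conclude maximal symmetry exactly as in the proof of Proposition~\ref{prop.normal}. We may assume $G$ is simply connected (the Ricci soliton lifts to the universal cover; the general case follows by a routine covering argument), so that $\Aut(G)=\Aut(\g)$ is a real algebraic group, has finitely many connected components, and has all of its maximal compact subgroups conjugate. Since $G$ is completely solvable and unimodular, \cite{GordonWilson:IsomGrpsOfRiemSolv} gives, for every left-invariant metric $h$ on $G$, that $\Isom(G,h)=G\rtimes L_h$ with $L_h=\Aut(\g)\cap O(\g,h)$ a compact subgroup of $\Aut(\g)$; in particular $\Isom(G,g)=G\rtimes L_g$. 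If $L_g$ is a maximal compact subgroup of $\Aut(\g)$, then any other $L_h$, being compact, lies in a conjugate $\psi^{-1}L_g\psi$ of $L_g$ with $\psi\in\Aut(\g)$, whence $\Isom(G,h)=G\rtimes L_h\subseteq G\rtimes\psi^{-1}L_g\psi=\Isom(G,\psi^{*}g)$ --- which is exactly the statement that $g$ is maximally symmetric.

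It thus suffices to show that $L_g=\Aut(\g)\cap O(\g,g)$ is a maximal compact subgroup of $\Aut(\g)$. Here I would use that a maximal compact subgroup of any Levi factor of $\Aut(\g)$ is automatically a maximal compact subgroup of $\Aut(\g)$ itself: the unipotent radical contains no nontrivial compact subgroup, and any compact subgroup of $\Aut(\g)$ can be conjugated, by an element of the unipotent radical, into a fixed Levi factor (vanishing of $H^{1}$ of a compact group with unipotent coefficients). So it is enough to exhibit a Levi factor $M$ of $\Aut(\g)$ that is in \emph{self-adjoint position} relative to the inner product $g$, i.e.\ stable under the transpose $\phi\mapsto\phi^{T}$. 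For such an $M$, the Cartan decomposition of a self-adjoint reductive linear group shows $M\cap O(\g,g)$ is a maximal compact subgroup of $M$, hence of $\Aut(\g)$; since it sits inside the compact group $L_g$, we conclude that $L_g$ is maximal compact. (One cannot hope for $\Aut(\g)$ itself to be $g$-self-adjoint: already on the Heisenberg group the ``shear'' automorphisms are not closed under transpose with respect to the nilsoliton metric, so the reduction to a Levi factor is essential.)

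To produce the self-adjoint Levi factor $M$, I would invoke the correspondence between left-invariant Ricci solitons on solvable Lie groups and geometric invariant theory --- due to Heber~\cite{Heber}, Lauret~\cite{LauretStandard}, and Nikolayevsky~\cite{Nikolayevsky:EinsteinSolvmanifoldsandPreEinsteinDerivation} --- that is developed in Section~\ref{sec: technical lemmas on GIT} and applied in Section~\ref{sec: proof of key lemma}. By Lauret's theorem $g$ is an algebraic Ricci soliton, $\Ric_g=c\,\Id+D$ with $D\in\Der(\g)$ symmetric; if $g$ is flat then $\g$ is abelian --- here using that $G$ is completely solvable --- and there is nothing to prove, so assume $c<0$. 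Realize $\g$ as $\R^{n}$ with bracket $\mu$ and $g$ the standard inner product, so that $\Aut(\g)=GL(\g)_\mu$. Let $\phi$ be the associated pre-Einstein derivation --- semisimple, with real spectrum, and taken $g$-symmetric --- and let $G_\phi\subseteq GL(\g)$ be the associated reductive group. The soliton condition is precisely the geometric-invariant-theoretic condition that the orbit $G_\phi\cdot\mu$ be closed, with $\mu$ a minimal vector in it. Since $\phi$ is $g$-symmetric, $G_\phi$ is $g$-self-adjoint, and the stabilizer of a minimal vector under a self-adjoint reductive group is itself self-adjoint (Kempf--Ness, Richardson--Slodowy); thus $\Aut(\g)\cap G_\phi$ is $g$-self-adjoint. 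Finally, $\Aut(\g)\cap G_\phi$ --- the subgroup of automorphisms of $\g$ commuting with $\phi$ --- is a Levi factor of $\Aut(\g)$, again by the Heber--Nikolayevsky theory. This is the desired $M$. (When $G$ is nilpotent, this recovers the maximal symmetry of nilsoliton metrics used in the outline of Theorem~\ref{mt}.)

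The one substantive point is the third paragraph: matching the minimal-vector statement for the auxiliary action of $G_\phi$ on the variety of bracket structures with the internal structure of the \emph{full} automorphism group $\Aut(\g)$ --- concretely, that the automorphisms commuting with $\phi$ form a Levi factor of $\Aut(\g)$ and can be placed in $g$-self-adjoint position. The remaining ingredients --- the reduction to ``$L_g$ is maximal compact,'' the passage from a Levi factor to all of $\Aut(\g)$, and the Cartan decomposition --- are formal, as is the reduction to the simply connected case.
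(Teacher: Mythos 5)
The paper does not actually prove this proposition --- it is imported wholesale from \cite{Jablo:ConceringExistenceOfEinstein} --- so there is no internal proof to compare against; but your architecture is exactly that of the cited source and is sound where it is complete. The reduction to ``$L_g=\Aut(\g)\cap O(\g,g)$ is a maximal compact subgroup of $\Aut(\g)$'' via \cite{GordonWilson:IsomGrpsOfRiemSolv} and conjugacy of maximal compacts, and the passage from a $g$-self-adjoint maximal fully reducible subgroup to maximal compactness of $L_g$, are both correct modulo standard facts about algebraic groups. For $G$ \emph{nilpotent} your third paragraph also closes up: the nilsoliton bracket is a minimal vector for Nikolayevsky's $G_\varphi$ (since the moment map value $\Ric_\mu=c\,\Id+D$ with $D\in\R\varphi$ is orthogonal to $\g_\varphi\subset\ker t$), Richardson--Slodowy makes the stabilizer $\Aut(\n)\cap G_\varphi$ self-adjoint, and since $\varphi$ is central in a maximal fully reducible subalgebra of $\Der(\n)$, the reductive (because self-adjoint) algebra $\Der(\n)^\varphi$ must equal that maximal fully reducible subalgebra.

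The genuine gap is that the proposition is not only about nilpotent groups, and your one-line assertion ``the soliton condition is precisely the condition that $G_\phi\cdot\mu$ be closed with $\mu$ a minimal vector'' is only backed, by the results quoted in this paper, for nilpotent $\g$: Proposition~\ref{prop.nik2} is stated and proved for nilpotent Lie algebras. Non-nilpotent completely solvable unimodular groups with left-invariant Ricci solitons exist --- e.g.\ $Sol^3=\R\ltimes_{\mathrm{diag}(1,-1)}\R^2$ --- and for these the $GL_n$-moment map at $\mu$ is not $\Ric_\mu$ but $\Ric_\mu+\tfrac12 B^\sharp$ (the $S(\ad H)$ term vanishes by unimodularity, but the Killing form does not), so placing $\mu$ as a minimal vector for $G_\phi$ requires Lauret's solvsoliton structure theory ($\g=\af+\n$ with $\ad(A)|_\n$ symmetric, $\Ric|_\af=-B^\sharp|_\af=c\,\Id_\af$, nilsoliton on $\n$) to absorb the extra term into $\R\,\Id+\R\phi$; alternatively one reduces to the nilradical as \cite{Jablo:ConceringExistenceOfEinstein} actually does, using the nilpotent case on $(\n,g|_\n)$ together with Equation~\eqref{eq.la} to see that a compact group of automorphisms acting orthogonally on the nilsoliton automatically acts orthogonally on $\af$. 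One of these arguments must be supplied. Two smaller points: your ``routine covering argument'' for non-simply-connected $G$ fails --- flat tori are completely solvable, unimodular Ricci solitons that are \emph{not} maximally symmetric in the sense of Definition~\ref{def.maxsym} (no finite subgroup of $GL_2(\Z)$ contains conjugates of both the square and hexagonal point groups), so simple connectivity must be read into the statement as in Corollary~\ref{cor.pos}; and $\Aut(\g)\cap G_\phi$ is not quite ``the automorphisms commuting with $\phi$'' (it omits $\exp(\R\phi)$, which has nontrivial determinant), though this is harmless since that factor is $g$-symmetric.
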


While Corollary~\ref{cor.pos} gives large families of Lie groups that admit maximally symmetric left-invariant Riemannian metrics, the existence of such metrics is far from universal.   

\begin{prop}\label{cmpt} There exist compact semisimple Lie groups $G$ that do not admit a maximally symmetric left-invariant Riemannian metric. 
\end{prop}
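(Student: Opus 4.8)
The plan is to exhibit a compact semisimple group on which no left-invariant metric is maximally symmetric, and I would organize the argument in two stages: (1) show that a maximally symmetric left-invariant metric, if one existed, would be forced to be a positive multiple of the bi-invariant metric $-B$ coming from the negative of the Killing form; (2) show that this metric $-B$ is itself not maximally symmetric.

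For stage (1), take $G = G_0 \times \cdots \times G_0$ to be a product of $k \ge 2$ copies of a compact simple Lie group $G_0$, and let $-B$ be the left-invariant metric determined by (the negative of) the Killing form of $\g = \g_0 \oplus \cdots \oplus \g_0$. This metric is bi-invariant, is preserved by every automorphism of $G$ (in particular by the group $S_k$ of permutations of the $k$ isomorphic factors), and is invariant under the inversion map of $G$. Suppose $g$ is maximally symmetric. Applying the definition to $h = -B$, there is $\psi \in \Aut(G)$ with $\Isom(G,-B) \subseteq \Isom(G,\psi^*g)$. Since the right translations lie in $\Isom(G,-B)$, the metric $\psi^*g$ is right-invariant; it is also left-invariant (being the pull-back of a left-invariant metric under an automorphism), hence bi-invariant. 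A bi-invariant metric on $G$ is obtained from $-B$ by rescaling the $i$-th simple factor by a constant $\lambda_i>0$ (each simple ideal carries a unique bi-invariant inner product up to scale); since the factor permutations also lie in $\Isom(G,-B)\subseteq\Isom(G,\psi^*g)$, all the $\lambda_i$ coincide, so $\psi^*g = \lambda(-B)$. As every automorphism of $G$ preserves $-B$, this gives $g = \lambda(-B)$.

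For stage (2) --- the crux --- I would choose $G_0$, hence $G$, so that $G$ admits a left-invariant metric $h$ whose isometry group is not contained in $\Isom(G,-B)$. Because $\psi^*(-B)=-B$ for every $\psi\in\Aut(G)$, this directly contradicts the maximal symmetry of $\lambda(-B)$ obtained in stage (1), and hence $G$ admits no maximally symmetric left-invariant metric. The mechanism I have in mind is to realize the underlying manifold of $G$ as a homogeneous space $\hat G/\hat K$ of a compact Lie group $\hat G$ whose transitive action is genuinely ``non-standard'' --- i.e. $\hat G$, with this action, is not conjugate in $\diff(G)$ to a subgroup of $\Isom(G,-B)$, the isometry group of the symmetric space $(G,-B)$ (which is generated by the left and right translations together with the automorphisms and the inversion of $G$). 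Pulling back a $\hat G$-invariant Riemannian metric from $\hat G/\hat K$ then produces a left-invariant metric $h$ on $G$ with $\hat G \subseteq \Isom(G,h)$ and $\Isom(G,h) \not\subseteq \Isom(G,-B)$.

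The main obstacle is precisely stage (2): one must produce a compact semisimple $G$ carrying such a non-standard transitive action and then verify rigorously that the enlarged isometry group escapes $\Isom(G,-B)$, and in particular cannot be conjugated inside it by any automorphism of $G$. Identifying and checking such an example --- equivalently, locating a left-invariant metric whose isometry group is, even up to automorphisms, incomparable with that of the Killing-form metric --- is where essentially all of the work lies; stage (1) then closes the argument.
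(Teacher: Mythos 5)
Your stage (1) is correct and runs parallel to the first half of the paper's proof: the paper shows that any maximally symmetric left-invariant metric $g_0$ on a compact semisimple $G$ must be bi-invariant, by comparing $g_0$ with a bi-invariant metric $h$ and using that right translations (equivalently, inner automorphisms) are isometries of $h$. Your permutation argument on $G=G_0^k$ merely sharpens this to $g_0=\lambda(-B)$, which is a harmless refinement and not where the content lies.

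The genuine gap is stage (2), and you have flagged it yourself: you never exhibit a compact semisimple $G$ together with a left-invariant metric $h$ satisfying $\Isom(G,h)\not\subseteq\Isom(G,-B)$. That existence statement is not a finishing detail --- it is essentially equivalent to the proposition itself. Indeed, by Ochiai--Takahashi (used in Corollary~\ref{cor.pos}) no such $h$ exists when $G$ is simple, so the bi-invariant metric \emph{is} maximally symmetric there, and one cannot expect to conjure the required ``non-standard'' transitive action by general principles. The paper closes exactly this gap by invoking a theorem of Ozeki \cite{Ozeki}, which produces left-invariant metrics $h$ on certain compact semisimple groups for which the subgroup of left translations is not normal in $\Isom(G,h)$; since $\Isom(G,h)<\Isom(G,g_0)=G\rtimes\Aut(G)$ would force that normality (the $\Aut(G)$-invariance of the bi-invariant $g_0$ letting one discard the automorphism $\psi$ in Definition~\ref{def.maxsym}), this yields the contradiction. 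Until you either cite such a result or construct and verify an explicit example, your argument proves only the preliminary reduction that a maximally symmetric metric would have to be bi-invariant, not the nonexistence claimed in the proposition.
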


\begin{proof} 
Suppose that $g_0$ is a maximally symmetric left-invariant Riemannian metric on $G$.   We first show that $g_0$ must be bi-invariant.  Let $g$ be a bi-invariant metric on $G$.  Then there exists $\tau\in\Aut(G)$ such that 
$$G\rtimes\Aut(G)=\Isom(G,g) < \Isom(G,\tau^*g_0).$$
In particular, $\tau^*g_0$ is invariant under all inner automorphisms and thus under right, as well as left translations.  I.e., $\tau^*g_0$ is bi-invariant.   But then $\tau^*g_0=g_0$, so $g_0$ is bi-invariant.

Definition~\ref{def.maxsym}, together with the fact that $g_0$ is invariant under $\Aut(G)$, thus implies that for every left-invariant metric $h$, we have $\Isom(G,h)<\Isom(G,g_0)=G\rtimes \Aut(G)$ and hence $G$ is normal in $\Isom(G,h)$.  However, Ozeki \cite{Ozeki} proved that there exist left-invariant Riemannian metrics $h$ on some compact semisimple Lie groups $G$ such that the group of left-translations of $G$ is not a normal subgroup of $\Isom(G,h)$.

\end{proof}

\begin{remark}  In the same article \cite{Ozeki} cited in the proof of Proposition~\ref{cmpt}, Ozeki showed that for every left-invariant metric $h$ on a compact semisimple Lie group $G$, there exists a normal subgroup $G'$ of $\Isom(G,h)$ that is isomorphic to $G$.  It is thus easy to see that bi-invariant metrics $g$ on $G$ satisfy the following weaker notion of maximal symmetry:  For every left-invariant Riemannian metric $h$ on $G$, the isometry group of $h$ is isomorphic to a subgroup of $\Isom(G,g)$.  
\end{remark}

\begin{example}\label{ex.sl} Let $S$ be the connected, simply-connected solvable Lie group given by the direct product $S=S_1\times \R$ where $S_1$ is the Iwasawa subgroup of $SL(2,\R)$, i.e., $S_1$ is the unique connected, simply-connected, non-abelian, two-dimensional solvable Lie group.  We show that $S$ cannot admit a maximally symmetric left-invariant Riemannian metric.  First note that for any left-invariant metric on a three-dimensional Lie group, the full isometry group must have dimension 3, 4 or 6, since the isotropy algebra must be isomorphic to a subalgebra of $\mathfrak{so}(3)$.  Moreover, every three-dimensional manifold with a six-dimensional isometry group must have constant sectional curvature.   Since $S$ does not admit a left-invariant metric of constant curvature, the isometry group of any left-invariant metric on $S$ must have dimension at most four.  We will exhibit a pair of left-invariant metrics $h_1$ and $h_2$ on $S$ such that the identify components $\Isom_0(S,h_1)$ and $\Isom_0(S,h_2)$ are non-isomorphic four-dimensional Lie groups.   If $S$ admitted a maximally symmetric left-invariant Riemannian metric $g$, then $\Isom(S,g)$ would have to contain isomorphic copies of both $\Isom_0(S,h_1)$ and $\Isom_0(S,h_2)$.   This is impossible since $\Isom(S,g)$ can itself be at most four-dimensional.

We construct the two metrics.   
We define $h_1$ to be the direct product of the hyperbolic metric on $S_1$ and a Euclidean metric on $\R$.   Then $$\Isom_0(S,h_1)= PSL(2,\R)\times \R.$$   To define $h_2$, first consider a left-invariant metric $h$ on the universal cover $\widetilde{SL}(2,\R)$ of $SL(2,\R)$, defined by an $\Ad(K)$-invariant inner product on the Lie algebra $\mathfrak{sl}(2,\R)$, where $K=SO(2,\R)$.   The identity component of the isometry group of $h$ is given by 
$$\Isom_0(\widetilde{SL}(2,\R), h)=(\widetilde{SL}(2,\R)\times \tilde{K})/D.$$
(See \cite{G79}.)
Here $\tilde{K}\simeq \R$ is the connected subgroup of $\widetilde{SL}(2,\R)$ with Lie algebra $\mathfrak{so}(2,\R)$.  The action of $(a,b)\in \widetilde{SL}(2,\R)\times \tilde{K}$ on $c\in  \widetilde{SL}(2,\R)$ is given by $c\mapsto acb^{-1}$.  The center of $\widetilde{SL}(2,\R)$ is isomorphic to $\Z$ and is contained in $\tilde{K}$.   The subgroup $D$ of $\widetilde{SL}(2,\R)\times \tilde{K}$ is the image of the center under the embedding $z\mapsto (z, z)\in \tilde{K}\times \tilde{K}<\widetilde{SL}(2,\R)\times \tilde{K}$.   Viewing the Iwasawa group $S_1$ as a subgroup of $\widetilde{SL}(2,\R)$, then $S_1\times \tilde{K}$ is a simply-transitive subgroup of $\Isom_0(\widetilde{SL}(2,\R), h)$ isomorphic to $S=S_1\times \R$.   Thus the metric $h$ defines a left-invariant Riemannian metric $h_2$ on $S$ with $$\Isom_0(S,h_2)\simeq(\widetilde{SL}(2,\R)\times \tilde{K})/D.$$   This completes the proof that $S$ does not admit a maximally symmetric left-invariant Riemannian metric.

\end{example}

\begin{remark}
The metric $h_1$ in Example~\ref{ex.sl} is a solvsoliton, i.e., a left-invariant Ricci soliton on the solvable Lie group.  Thus the Main Theorem~\ref{mt} fails when Einstein is replaced by Ricci soliton.
\end{remark}

\section{Background}\label{einstauts}

\begin{notconv} In the remainder of this article, we will always use the corresponding fraktur, with any appropriate subscripts or superscripts, to denote the Lie algebra of a given Lie group.  E.g., if a Lie group is named $G_1$, its Lie algebra will be denoted $\g_1$.
\end{notconv}

In this preliminary section, we first review existence and structural results for Einstein solvmanifolds of negative Ricci curvature.   We then discuss a technique of Y. Nikolayevsky for determining whether a solvable Lie group admits such a metric.   Finally we review the structure theory of isometry groups of arbitrary solvmanifolds.

\subsection{Solvable Lie groups admitting Einstein metrics of negative Ricci curvature}\label{subsec.einst}

We restrict our attention to non-flat homogeneous Einstein metrics.    Any solvable Lie group admitting such a non-flat Einstein metric is necessarily non-unimodular \cite{DottiMiatello:TransitveGroupActionsAndRicciCurvatureProperties}.

\begin{defin}\label{std}\text{}
\begin{enumerate}
	\item A solvable Lie group $S$ will be said to be of \emph{Einstein type} if it admits a left-invariant Einstein metric of negative Ricci curvature.   We will also say that its Lie algebra $\s$ is of Einstein type.   We will say that the nilradical $N$ of $S$ (or the nilradical $\n$ of $\s$) is an Einstein nilradical.
	\item A non-unimodular, metric solvable Lie algebra $(\s, \Ip)$ is said to be \emph{standard} if it can be written as an orthogonal direct sum $\s=\af+\n$ where  $\af$ is abelian and $\n=[\s,\s]$  is the nilradical.   
	\item A standard metric solvable Lie algebra is said to be of \emph{Iwasawa} type if $\ad(A)|_{\n}$ is symmetric and non-zero for all $A\in \af$ and if there exists some $H\in\af$ such that $\ad(H)|_{\n}$ is positive-definite.   We will say a solvable Lie algebra $\s$ is of Iwasawa type if it admits an inner product satisfying these conditions.
	\item A solvable Lie group (with a left-invariant Riemannian metric) is said to be standard, respectively of Iwasawa type, if the associated metric Lie algebra is standard, respectively of Iwasawa type.   

\end{enumerate}

\end{defin}

In 1998, J. Heber \cite{Heber} extensively addressed the structure of standard Einstein solvmanifolds.  Heber's work resulted in great interest in the question of whether all non-flat Einstein solvmanifolds are standard.   A decade later J. Lauret \cite{LauretStandard} answered this question in the affirmative.   The three propositions in this and the next subsection summarize the part of the extensive work of Heber, Lauret and Y. Nikolayevsky that provide the background needed in the later sections of this paper.

\begin{notarem}\label{nota.ss}\text{}
\begin{enumerate}
\item Given a decomposition $\s=\af+\n$ of a solvable Lie algebra $\s$, where $\n=\nilrad(\s)$ and $\af$ is an abelian complement, we will denote by $\Der(\s)^{\af}$ the space of derivations commuting with $\ad_\s(\af)$, i.e., the derivations that vanish on $\af$.  Via the isomorphism $\alpha\mapsto \alpha|_{n}$, we may identify $\Der(\s)^\af$ with the space $\Der(\n)^\af$ of all derivations of $\n$ commuting with $\ad(\af)|_\n$.

\item  If $T$ is a semisimple endomorphism of a finite-dimensional vector space $V$, then $T$ can be uniquely decomposed as $T=T^\R + T^{i\R}$ where $T^\R$, respectively $T^{i\R}$, is a semisimple endomorphism with all eigenvalues real, respectively, purely imaginary, and where $T^\R$ and $T^{i\R}$ commute with $T$ (hence with each other).  Moreover, $T^\R$ and $T^{i\R}$ commute with any other endomorphism that commutes with $T$.  We will refer to $T^\R$ and $T^{i\R}$ as the \emph{real and imaginary parts} of $T$.   If $D$ is a semisimple derivation of a Lie algebra $\g$, then $D^\R$ and $D^{i\R}$ are also derivations of $\g$.
\end{enumerate}
\end{notarem}

\begin{prop}[Heber \cite{Heber}]\label{prop.heb} 
If a solvable Lie group $S$ admits a standard Einstein metric $g$, then \emph{every} Einstein metric on $S$ is isometric to $g$ up to scaling.   

Let $\s=\af+\n $ be a decomposition of $\s$ as in Definition~\ref{std}.  Then in the notation of~\ref{nota.ss}:
\begin{enumerate}
\item    
$\Der(\s)=\ad_\s(\n) + \Der(\s)^{\af}.$
Moreover, $\Der(\s)^{\af}$ is reductive and decomposes as $\kf+\p$ where the elements of $\kf$ are skew-symmetric and the elements of $\p$ are symmetric relative to $g$.  (We will often identify elements of $\Der(\s)^{\af}$ with their restrictions to $\n$.)
\item For $0\neq A\in\af$, we have $0\neq \ad(A)^\R\in\p$ and $\ad(A)^{i\R}\in \kf$.   Let $\af'=\{\ad(A)^\R:A\in \af\}$ and let $\s'$ be the semi-direct sum of $\af'$ and $\n$.   Then $\s'$ is of Iwasawa type and the associated simply-connected solvable Lie group acts simply-transitively on $S$.  
\item Let $H$ be the unique element of $\af$ such that $g(H, A)=\mytrace(\ad(A))$ for all $A\in\af$.   Then there exists $\lambda\in\R^+$ such that the eigenvalues of $\lambda\, \ad(H)^\R|_\n$ are positive integers.  Thus $\s'$ is of Iwasawa type.  The derivation $\ad(H)|_{\n}^\R$ is sometimes called the \emph{Einstein derivation}.
\item  Let $\cf$ be any abelian subspace of $\p$  containing the Einstein derivation.   Then the semi-direct product $\cf\ltimes \n$ of $\cf$ with $\n$ admits an Einstein inner product.   
\end{enumerate}
\end{prop}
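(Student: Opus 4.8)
The plan is to exhibit an explicit Einstein inner product on $\cf\ltimes\n$ and to verify the Einstein equation with the standard formula for the Ricci curvature of a metric solvable Lie algebra of the form $\af\ltimes\n$ with $\af$ abelian and acting symmetrically on $\n$ (see \cite{Heber}). First, using part~(ii), I would replace $(S,g)$ by the isometric solvmanifold associated to $\s'=\af'\ltimes\n$; since the metric on $\n$ is unchanged and every derivation of $\n$ commuting with $\ad_\s(\af)|_\n$ also commutes with $\af'$, this only enlarges the space $\p$, so it is enough to treat the case in which $\s=\af+\n$ is of Iwasawa type. Then each $\ad_\s(A)|_\n$ is $g$-symmetric; let $H\in\af$ be the mean curvature vector, $g(H,A)=\mytrace\big(\ad_\s(A)|_\n\big)$, so that the Einstein derivation is $\phi=\ad_\s(H)|_\n\in\p$, a $g|_\n$-symmetric derivation of $\n$. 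Restricting $\Ric_{(\s,g)}=c\,\Id_\s$ (with $c<0$) to $\n$ via $\Ric_{(\s,g)}|_\n=\Ric_{(\n,\,g|_\n)}-\ad_\s(H)|_\n$ shows that $(\n,g|_\n)$ is a nilsoliton with $\Ric_{(\n,\,g|_\n)}=c\,\Id_\n+\phi$.

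Next I would define an inner product $\tilde g$ on $\tilde\s:=\cf\ltimes\n$ by $\tilde g|_\n:=g|_\n$, by declaring $\cf\perp\n$, and by $\tilde g(X,Y):=-\tfrac1c\,\mytrace_\n(X\circ Y)$ for $X,Y\in\cf$, where $\cf$ is regarded as a subspace of $\Der(\n)\subset\gl(\n)$. The elements of $\cf\subset\p$ are $g|_\n$-symmetric and $-1/c>0$, so $\tilde g|_\cf$ is positive definite and $\tilde g$ is a genuine inner product; moreover $\cf$ is an abelian algebra of derivations, so $\tilde\s$ is solvable. The same Ricci formula applies to $(\tilde\s,\tilde g)$ because $\cf$ is abelian and acts by $\tilde g|_\n$-symmetric derivations: there are no $\cf$--$\n$ cross terms; on $\cf$ one gets $\tilde g(\Ric_{\tilde g}X,Y)=-\mytrace_\n(X\circ Y)=c\,\tilde g(X,Y)$ directly from the definition of $\tilde g|_\cf$; and on $\n$
$$\Ric_{\tilde g}|_\n=\Ric_{(\n,\,g|_\n)}-\ad_{\tilde\s}(\tilde H)|_\n=\big(c\,\Id_\n+\phi\big)-\tilde H,$$
where $\tilde H\in\cf$ is the $\tilde g$-mean curvature vector, characterized by $\tilde g(\tilde H,X)=\mytrace_\n(X)$ for all $X\in\cf$, and I have used $\ad_{\tilde\s}(\tilde H)|_\n=\tilde H$.

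Everything thus reduces to proving $\tilde H=\phi$ (note $\phi\in\cf$ by hypothesis), i.e.\ the identity $-\tfrac1c\,\mytrace_\n(\phi\circ X)=\mytrace_\n(X)$ for all $X\in\cf$; substituting $\phi=\Ric_{(\n,\,g|_\n)}-c\,\Id_\n$, this is equivalent to
$$\mytrace_\n\big(\Ric_{(\n,\,g|_\n)}\circ X\big)=0\qquad\text{for every }X\in\cf.$$
This is the crux, and the step I expect to be the main obstacle: it is not formal, but is the standard fact that the Ricci endomorphism of a nilpotent metric Lie algebra is orthogonal, in the trace form on $\gl(\n)$, to every derivation of $\n$ --- applied here to the $g|_\n$-symmetric derivations constituting $\cf$. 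I would justify it either by citing the nilsoliton literature, or, consistently with the geometric invariant theory used later in the paper, by noting that $t\mapsto\|e^{tX}\cdot\mu\|^2$ is constant because $X\in\Der(\n)$ lies in the Lie algebra of the isotropy group $\Stab_{GL(\n)}(\mu)$, and that the vanishing of the derivative of this function at $t=0$ is precisely the trace-orthogonality of $X$ to a fixed positive multiple of $\Ric_{(\n,\,g|_\n)}$. Granting the identity, $\tilde H=\phi$, so $\Ric_{\tilde g}=c\,\Id_{\tilde\s}$ with $c<0$; hence the simply-connected Lie group of $\cf\ltimes\n$ carries a left-invariant Einstein metric of negative Ricci curvature, which is the assertion. (As a check, the choice $\cf=\af'$ returns the Einstein metric we started from.)
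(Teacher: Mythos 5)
The paper offers no proof of this proposition; it is quoted as background and attributed wholesale to Heber, so the only meaningful comparison is with Heber's original arguments. Within that frame, your argument for part (iv) is correct and is essentially the expected one: reduce to the Iwasawa-type algebra $\s'$ via part (ii) (your observation that $\Der(\n)^{\af}\subset\Der(\n)^{\af'}$, so $\cf$ survives the reduction, is the right point to check), read off the nilsoliton equation $\Ric_{(\n,g|_\n)}=c\,\Id_\n+\phi$ from the restriction of the Einstein equation to $\n$, put the metric $-\tfrac1c\mytrace_\n(X\circ Y)$ on $\cf$, and reduce everything to the identity $\mytrace_\n\big(\Ric_{(\n,g|_\n)}\circ X\big)=0$ for $X\in\Der(\n)$. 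You correctly isolate that identity as the crux, and your justification --- $\Der(\n)$ is the isotropy algebra of $\mu$ under the $GL(\n)$-action on brackets, so $t\mapsto\|e^{tX}\cdot\mu\|^2$ is constant, and its derivative at $t=0$ is a positive multiple of $\mytrace(\Ric_{(\n,g|_\n)}X)$ --- is the standard and correct one (it is Lauret's moment-map identification, consistent with the GIT framework used later in the paper). The block computations (vanishing of the $\cf$--$\n$ cross terms, the $\cf$-block giving $-\mytrace_\n(XY)=c\,\tilde g(X,Y)$, and $\Ric_{\tilde g}|_\n=\Ric_{(\n,g|_\n)}-\ad(\tilde H)|_\n$) all check out for an abelian algebra of symmetric derivations.

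The gap is one of coverage rather than logic: the statement has five assertions and your proposal proves only the last. The uniqueness claim, the reductivity and Cartan-type decomposition of $\Der(\s)^{\af}$ in (i), the properties of $\ad(A)^{\R}$ and the simply transitive action of $S'$ in (ii), and the rationality/positivity of the eigenvalues of the Einstein derivation in (iii) are the deep structural results of Heber's paper, and your argument consumes (ii) (together with the negativity of $c$) as an input rather than establishing it. If the intended task was to prove (iv) granting (i)--(iii), your write-up is complete modulo the standard facts you correctly cite; if it was to prove the proposition as stated, the bulk of the content is missing and cannot be recovered by the elementary Ricci computations you use --- in particular the uniqueness statement and part (iii) require Heber's variational and stratification machinery.
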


\begin{defin} A left-invariant Riemannian metric $g$ on a nilpotent Lie group $N$ is called a \emph{nilsoliton} if it is a Ricci soliton.   This condition is equivalent to $Ric_g =c Id + D$ for some constant $c$ and some $D\in \Der(\n)$.   (Here $Ric_g$ is the Ricci operator.)
\end{defin}

\begin{remark}  In the defintition above, the condition $Ric_g = c Id +D$ always produces a left-invariant Ricci soliton on a Lie group.  In the case of nilpotent groups, all Ricci solitons satisfy this condition, but this is not true more generally for solvable Lie groups.  If a left-invariant metric on a solvable Lie group satisfies $Ric_g = c Id +D$, it is called a solvsoliton.  

It is known that a Ricci soliton on a solvable Lie group is isometric to a solvsoliton (on a possibly different solvable Lie group) \cite{Jablo:HomogeneousRicciSolitons}.  To go between these two different solvable Lie structures, one uses the process of `modifications', see Definition \ref{def.stdpos}.


\end{remark}

\begin{prop}[Lauret \cite{LauretStandard,LauretNilsoliton}]\label{prop.Lau}
$\text{}$
\begin{enumerate}
\item Every Einstein solvmanifold of negative Ricci curvature is standard.
\item Let $N$ be a simply-connected nilpotent Lie group.   Then $N$ is an Einstein nilradical if and only if $N$ admits a nilsoliton metric.   If $(S,g)$ is any Einstein solvmanifold of negative Ricci curvature with nilradical $N$, then the restriction of $g$ to $N$ is a nilsoliton.
\item A nilpotent Lie group admits at most one nilsoliton metric, up to automorphism and scaling.
 \end{enumerate}
\end{prop}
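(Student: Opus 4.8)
The plan is to establish the three assertions in the order (i), (ii), (iii), since (i) is what makes the tools behind the other two applicable. All three are handled inside the \emph{varying bracket} picture: fix $\R^n$ with a background inner product $\Ip_0$, let $V_n\subset\wedge^2(\R^n)^*\otimes\R^n$ be the real variety of nilpotent Lie brackets, and record a nilpotent Lie algebra $\n$ together with a left-invariant metric as the point $\mu\in V_n$ obtained by transporting the metric to $\Ip_0$. Then $GL_n\R$ acts on $V_n$; the orbit through $\mu$ is the isomorphism class of $\n$, the $O(n)$-orbit inside it records the metric, the Ricci operator is --- up to a fixed normalization --- the moment map $m\colon V_n\setminus\{0\}\to\gl_n\R$ for this action, and the nilsoliton equation $\Ric = c\,\Id + D$ with $D\in\Der(\n)$ is exactly the statement that $\mu$ is a critical point of $\|m\|^2$ on its orbit, equivalently a minimal vector --- and such a point exists in $GL_n\R\cdot\mu$ if and only if that orbit is closed.

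\emph{Part (i).} Let $(S,g)$ be Einstein with $\Ric=\lambda\,\Id$, $\lambda<0$; by \cite{DottiMiatello:TransitveGroupActionsAndRicciCurvatureProperties} it is non-unimodular and we may take $S$ simply connected, so we work with a metric solvable Lie algebra $(\s,\Ip)$. Write $\s=\h\oplus\n$, with $\n=\nilrad(\s)$ and $\h=\n^\perp$; ``standard'' is in essence the assertion $[\h,\h]=0$. The idea is to realize the Einstein condition as a critical-point condition for a $GL$-equivariant functional on the variety of \emph{metric solvable} structures built over $\n$ and a complement, and then to run the Kirwan--Ness stratification of that representation: one shows that if $[\h,\h]\neq 0$, then the associated bracket lies in an unstable stratum whose characteristic direction is incompatible with the normalization forced on $\Ric$ by the Einstein equation together with the mean-curvature vector $H$ (defined by $\Ip(H,X)=\mytrace(\ad X)$), so that the functional strictly decreases in a suitable direction, contradicting criticality. \textbf{This is the main obstacle}: it is Lauret's theorem \cite{LauretStandard}, and the real content is in the stratification estimates, not in the formal reduction to them.

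\emph{Part (ii).} Granting (i), every Einstein solvmanifold of negative Ricci curvature is standard, so Heber's structure theorem (Proposition~\ref{prop.heb}) applies. Since $\ad X$ is nilpotent --- hence traceless --- for $X\in\n$, we have $H\in\h$, and in the standard case $\h=\af$ with $\ad(H)|_\n^\R$ a symmetric derivation of $\n$. The Ricci formula for standard solvmanifolds (see \cite{Heber}) then displays the restriction of $\Ric=\lambda\,\Id$ to $\n$ as $\Ric_{g|_\n}=\lambda\,\Id_\n + c\,\ad(H)|_\n^\R$ for a positive constant $c$, which is of the form required by the nilsoliton equation; hence $g|_\n$ is a nilsoliton, which yields both the forward implication of (ii) and the ``restriction'' statement. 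For the converse, given a nilsoliton on $N$ with $\Ric=c\,\Id+D$, a direct Ricci computation (Heber, Lauret) shows that the solvable extension $\R D\ltimes\n$ --- with $D\perp\n$, a suitable choice of $\Ip(D,D)$, and the nilsoliton on $\n$ --- is Einstein of negative Ricci curvature, so $N$ is an Einstein nilradical; this is also the source of Proposition~\ref{prop.heb}(iv).

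\emph{Part (iii).} By the dictionary above, a nilsoliton metric on $\n$ corresponds to a minimal vector in the $GL_n\R$-orbit of $\mu$, and the uniqueness assertion is the real Kempf--Ness theorem: inside a closed $GL_n\R$-orbit the minimal vectors (critical points of $\|m\|^2$) form a single $O(n)$-orbit. Translating this back, two nilsoliton metrics on $N$ are carried into one another by an orthogonal change of bracket, i.e.\ by a linear isomorphism intertwining $\mu$ with itself --- an automorphism of $\n$ --- once one absorbs the overall scale pinned down by the normalization of $m$; hence the nilsoliton metric is unique up to automorphism and scaling. (Equivalently, one invokes the convexity, along the geodesics of the symmetric space $GL_n\R/O(n)$, of the natural functional whose critical points are the minimal vectors.)
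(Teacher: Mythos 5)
This proposition is stated in the paper as background and attributed to Lauret; the paper gives no proof of it, so there is no internal argument to compare yours against. Judged on its own terms, your sketch correctly identifies the standard strategy, and part (ii) is essentially complete modulo Heber's Ricci formula for standard solvmanifolds: the restriction $\Ric_{g|_\n}=\lambda\,\Id_\n+S(\ad H|_\n)$ with $S(\ad H|_\n)$ a derivation is exactly the nilsoliton equation, and the rank-one extension gives the converse. But part (i) is not proved: you name the Kirwan--Ness stratification argument and then concede that ``the real content is in the stratification estimates,'' which is precisely the content of Lauret's Annals paper. Since (i) is the deepest of the three assertions and is what licenses your use of Heber's structure theory in (ii), the proposal as written establishes only (ii) and (iii) conditionally on (i).

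There is also a genuine error in your GIT dictionary that propagates into part (iii). For a nonzero nilpotent bracket $\mu$, the $GL_n\R$-orbit always contains $0$ in its closure (rescale $\mu\mapsto t\cdot\mu$), so it is \emph{never} closed and contains no minimal vectors; your sentence ``$\mu$ is a critical point of $\|m\|^2$ on its orbit, equivalently a minimal vector --- and such a point exists in $GL_n\R\cdot\mu$ if and only if that orbit is closed'' conflates two different notions. Nilsolitons are critical points of the normalized functional $\|m\|^2/\|\mu\|^4$ (equivalently, of $\|m\|^2$ restricted to the unit sphere), not minimal vectors, and the closed-orbit characterization of existence is Nikolayevsky's, for the specific reductive subgroup $G_\varphi\subsetneq SL_n\R$ recorded in Proposition~\ref{prop.nik2} --- not for $GL_n\R$. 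Consequently the Kempf--Ness theorem for closed orbits that you invoke in part (iii) does not apply as stated; the uniqueness you need is the Ness-type statement that within a single $GL_n\R$-orbit the critical points of the normalized functional form a single $O(n)\times\R^{+}$-orbit, which is what Lauret actually proves (your parenthetical remark about convexity along geodesics of $GL_n\R/O(n)$ points at the correct argument). With that substitution the bookkeeping in part (iii) --- producing an automorphism composed with a scaling --- goes through.
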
 

\begin{remark}\label{rem.maxred}  Let $\s$ be a solvable Lie algebra of Einstein type.  By Propostion~\ref{prop.Lau}, every Einstein inner product on $\s$ is standard.  Given such an inner product, write $\s=\af+\n$, where $\af=\n^\perp$.   By Proposition~\ref{prop.heb}, $\ad_\s(\af)$ is a fully reducible subalgebra of $\ad(\s)$.  Moreover, $[\af,\n]=\n$, so $\s$ has trivial center and $\ad_\s(X)$ is a non-trivial nilpotent operator for every $X\in\n$.   Thus $\ad_\s(\mathfrak a)$ is a maximal $ad$-reductive subalgebra.    By the work of Mostow \cite[Theorem 4.1]{Mostow:FullyReducibleSubgrpsOfAlgGrps}, all maximal fully reducible subalgebras of linear Lie algebras, in particular, of $\ad(\s)$, are conjugate by an inner automorphism.    Since $\mathfrak s$ has no center, $ad: \mathfrak s \to \mathfrak{gl(s)}$ is an isomorphism onto its image.  Thus, the decomposition $\s=\af+\n$ is unique up to conjugacy by an element of the nilradical and every maximal fully $\ad$-reducible subalgebra of $\s$ is conjugate to $\af$.

If $\af$ is any maximal fully $\ad$-reducible subalgebra of $\s$, we will refer to $\s=\af+\n$ as a \emph{standard decomposition} of $\s$.  By the uniqueness statement above, given any standard decomposition $\s=\af+\n$ of a solvable Lie algebra of Einstein type, there exists an Einstein metric for which $\af\perp\n$.
\end{remark}

\subsection{Nikolayevsky's technique}\label{subsec.nik}
 
\begin{defin}\label{def.pre-Einst} (See  \cite{Nikolayevsky:EinsteinSolvmanifoldsandPreEinsteinDerivation}.)  
 A derivation $\varphi \in Der(\mathfrak g)$ of a Lie algebra $\g$ is a \textit{pre-Einstein derviation} if it is semisimple as an element of $End(\mathfrak g)$ with all eigenvalues real, and satisfies
\begin{equation}\label{eqn:pre-Einstein deriv} \mytrace(\varphi A) = \mytrace(A)  \quad \mbox{ for all } A\in \Der(\g)
\end{equation}

 \end{defin}

 \begin{prop}[Nikolayevsky \cite{Nikolayevsky:EinsteinSolvmanifoldsandPreEinsteinDerivation}]\label{prop.nik}
 $\text{}$
\begin{enumerate}
 \item Every Lie algebra admits a pre-Einstein derivation $\varphi$, unique up to automorphism.   The eigenvalues of $\varphi$ are rational.
 \item If $N$ is an Einstein nilradical, then the Einstein derivation (see Proposition~\ref{prop.heb}(iii)) of every Einstein solvmanifold with nilradical $N$ is a positive multiple of a pre-Einstein derivation $\varphi$ of $\n$.
 
\end{enumerate}
 
 \end{prop}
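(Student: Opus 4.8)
\emph{Existence (part (i)).} Write $\mathfrak d=\Der(\g)\subseteq\mathfrak{gl}(\g)$. This is an algebraic Lie algebra, being the Lie algebra of the real algebraic group $\Aut(\g)$, and in particular it is closed under the real, imaginary and nilpotent Jordan parts of its elements. Fix a maximal abelian subalgebra $\mathfrak t\subseteq\mathfrak d$ consisting of $\R$-semisimple elements, i.e.\ a maximal $\R$-split torus. On $\mathfrak t$ the trace form $B(X,Y)=\mytrace(XY)$ is positive definite: a nonzero $H\in\mathfrak t$ has real, not-all-zero eigenvalues on $\g$, whence $\mytrace(H^2)>0$. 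So there is a unique $\varphi\in\mathfrak t$ with $B(\varphi,H)=\mytrace(H)$ for all $H\in\mathfrak t$, and this $\varphi$ is a semisimple derivation with real eigenvalues. The crux is then to upgrade this identity from $\mathfrak t$ to all of $\mathfrak d$. Decomposing $\mathfrak d=\bigoplus_\alpha\mathfrak d_\alpha$ into the simultaneous real eigenspaces of $\ad_{\mathfrak d}(\mathfrak t)$, one checks via short trace computations that $\mytrace(A)=\mytrace(\varphi A)=0$ whenever $A\in\mathfrak d_\alpha$ with $\alpha\neq0$ (choose $H$ with $\alpha(H)\neq0$ and take traces of $[H,A]=\alpha(H)A$ and of $\varphi[H,A]$). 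On the centralizer $\mathfrak d_0=Z_{\mathfrak d}(\mathfrak t)$, the nilpotent and imaginary Jordan parts of any element contribute $0$ to both $\mytrace(\cdot)$ and $\mytrace(\varphi\,\cdot)$, while the real-semisimple part lies in $\mathfrak t$ by maximality; hence the identity on $\mathfrak d_0$ follows from its validity on $\mathfrak t$. Thus $\varphi$ is a pre-Einstein derivation.

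\emph{Uniqueness and rationality (rest of part (i)).} Any pre-Einstein derivation is $\R$-semisimple, hence lies in some maximal $\R$-split torus, and within that torus nondegeneracy of $B$ forces it to be the representative of the trace functional constructed above. Since all maximal $\R$-split tori of $\mathfrak d$ are conjugate under $\Aut(\g)^0$ by the structure theory of real algebraic groups (cf.\ \cite{Mostow:FullyReducibleSubgrpsOfAlgGrps}), and since conjugation preserves both $B$ and $\mytrace$, any two pre-Einstein derivations are carried to one another by an element of $\Aut(\g)$. For rationality, let $T$ be the maximal $\R$-split torus with $\operatorname{Lie}(T)=\mathfrak t$, and let $X^*(T)$ be its integral character lattice, giving a $\mathbf Q$-structure $\mathfrak t^*(\mathbf Q)=X^*(T)\otimes\mathbf Q$. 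The weights $\lambda_i$ of $\mathfrak t$ on $\g$ lie in $X^*(T)$, so $\mytrace=\sum_i(\dim\g_{\lambda_i})\lambda_i$ is a rational functional and $B(H,H')=\sum_i(\dim\g_{\lambda_i})\lambda_i(H)\lambda_i(H')$ is a rational (positive-definite) form. Hence $\varphi=B^{-1}(\mytrace)$ is rational, and each eigenvalue $\lambda_i(\varphi)$ is rational.

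\emph{The Einstein derivation (part (ii)).} Let $(S,g)$ be Einstein of negative Ricci curvature with nilradical $N$. By Proposition~\ref{prop.Lau}(ii) the restriction of $g$ to $N$ is a nilsoliton, so $\Ric|_{\n}=c\,\Id+D$ with $D\in\Der(\n)$ symmetric, and by Proposition~\ref{prop.heb}(iii) the Einstein derivation is a positive multiple of $D$; so it suffices to realize $D$ as a positive multiple of a pre-Einstein derivation. Identify $(\n,\langle\,,\,\rangle)$ with a bracket $\mu\in V=\wedge^2\n^*\otimes\n$, with $\mathfrak{gl}(\n)$ acting by $\pi$, so that $\pi(A)\mu=0$ exactly when $A\in\Der(\n)$. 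Lauret's formula for the Ricci operator \cite{LauretStandard,LauretNilsoliton} provides a nonzero constant $\kappa$ with $\langle\pi(E)\mu,\mu\rangle=\kappa\,\mytrace(\Ric_\mu\,E)$ for symmetric $E$; both sides vanish on skew-symmetric $E$ (the left because $\pi$ of a skew map is skew-adjoint on $V$, the right because $\Ric_\mu$ is symmetric), so the identity extends to all $E\in\mathfrak{gl}(\n)$. Taking $E=A\in\Der(\n)$ yields $\mytrace(\Ric_\mu A)=0$, whence $\mytrace(DA)=-c\,\mytrace(A)$ for all $A\in\Der(\n)$. Specializing to $A=D$ gives $c=-\mytrace(D^2)/\mytrace(D)<0$ (here $D\neq0$ since $g$ is non-flat, and $\mytrace(D)>0$ by Proposition~\ref{prop.heb}(iii)); therefore $\varphi:=D/(-c)$ is $\R$-semisimple and satisfies $\mytrace(\varphi A)=\mytrace(A)$, so $\varphi$ is a pre-Einstein derivation and $D=(-c)\varphi$.

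\emph{Main obstacle.} The delicate point is the rationality statement, which rests entirely on the integrality of the characters of the $\R$-split torus $T$: this is what endows $\mathfrak t$ with a $\mathbf Q$-structure in which both $\mytrace$ and $B$ become rational. The uniqueness argument likewise requires conjugacy of maximal $\R$-split tori inside $\Aut(\g)$ itself, and some care is needed to guarantee that the conjugating element can be taken in $\Aut(\g)$ rather than merely in the inner automorphism group of $\mathfrak d$.
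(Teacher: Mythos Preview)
The paper does not prove this proposition; it is quoted from Nikolayevsky \cite{Nikolayevsky:EinsteinSolvmanifoldsandPreEinsteinDerivation} and used as input. Your argument is a correct condensed reconstruction of Nikolayevsky's proof, phrased via maximal $\R$-split tori of $\Der(\g)$ rather than via a maximal fully reducible subalgebra; the two formulations are equivalent since the pre-Einstein derivation sits in the real part of the abelian factor $\Der^{\abc}$ of any maximal fully reducible subalgebra, which is exactly a maximal $\R$-split torus.

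Two minor points. First, in part~(ii) you write that ``by Proposition~\ref{prop.heb}(iii) the Einstein derivation is a positive multiple of $D$'', but that proposition only defines the Einstein derivation $\ad(H)^\R|_{\n}$ and asserts positivity of its eigenvalues; it does not directly identify it with the nilsoliton derivation $D$. The missing link (standard, and contained in \cite{Heber,LauretStandard}) is that for a standard Einstein metric on $\s=\af+\n$ one has $\Ric|_{\n}=\Ric^{\n}+\text{(symmetric term in }\ad(H)|_\n)$, and comparing with $\Ric^\n=c\,\Id+D$ forces $D$ to be a positive multiple of $\ad(H)|_\n$. Once that is in place, your moment-map identity $\mytrace(\Ric_\mu A)=0$ for $A\in\Der(\n)$ gives the pre-Einstein equation for $D/(-c)$ exactly as you wrote.

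Second, the concern you flag about conjugacy happening in $\Aut(\g)$ rather than merely in inner automorphisms of $\mathfrak d$ is not an actual obstacle: the Borel--Tits conjugacy of maximal $\R$-split tori is a statement about the real algebraic group in question, here $\Aut(\g)$ itself, and the conjugating element may be taken in $\Aut(\g)^0$.
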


  \begin{prop}[Nikolayevsky \cite{Nikolayevsky:EinsteinSolvmanifoldsandPreEinsteinDerivation}]\label{prop.nik2}
Let $\n$ be a nilpotent Lie algebra of dimension $n$.   View the bracket $\mu:\n\times\n\to\n$ as an element of $V=\wedge^2(\mathbb R^n)^*\otimes \mathbb R^n$.  The group $GL_n(
\R)$ acts on $V$ via $A.\nu(x,y)=A\nu(A^{-1}x,A^{-1}y)$ for $A\in GL_n(\R)$, $\nu\in V$ and $x,y\in\R^n$, and this action gives rise to an action of the Lie algebra $\mathfrak{gl}_n(\R)$ on $V$.
Fix a choice of pre-Einstein derivation $\varphi$ of $\n$.  Let $t: GL_n(\R)\to \R$ be given by $t(A)=\mytrace(A\varphi)$ and let 
$$\mathfrak g_\varphi = \mathfrak{sl}_n(\mathbb R) \cap \mathfrak z(\varphi) \cap Ker~t$$  
where $\mathfrak z(\varphi)$ is the centralizer of $\varphi$ in $\mathfrak{gl}_n(\R)$.   Let $G_\varphi$ be the connected subgroup of $SL_n(\R)$ with Lie algebra $\mathfrak g_\varphi$.   The group $G_\varphi$ is fully reducible, and
the simply-connected Lie group $N$ with Lie algebra $\n$ admits a nilsoliton metric if and only if the orbit $G_\varphi.\mu$ is closed in $V$.
  
  \end{prop}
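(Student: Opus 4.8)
The plan is to reconstruct Nikolayevsky's argument, which naturally splits into two parts: full reducibility of $G_\varphi$, and the equivalence ``$G_\varphi.\mu$ closed $\iff$ $N$ admits a nilsoliton''. First I would dispose of the abelian case ($\mu=0$, orbit $\{0\}$, everything trivial) and then choose a basis of $\R^n$ in which $\varphi$ is diagonal; this loses no generality, since conjugating the pair $(\mu,\varphi)$ by an element of $GL_n(\R)$ replaces $G_\varphi.\mu$ by a linearly equivalent orbit and changes neither its closedness nor the abstract Lie algebra $\n$. With $\varphi$ diagonal, $\mathfrak z(\varphi)$ is block-diagonal, hence reductive and closed under transpose; and since both $A\mapsto\mytrace(A)$ and $t(A)=\mytrace(A\varphi)$ are invariant under transposition (using $\varphi^{T}=\varphi$), the subalgebra $\mathfrak g_\varphi=\mathfrak{sl}_n(\R)\cap\mathfrak z(\varphi)\cap\ker t$ is self-adjoint, so the connected group $G_\varphi$ is fully reducible. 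I would also record the trace-form orthogonal decomposition $\mathfrak z(\varphi)=\mathfrak g_\varphi\oplus\R\,\Id\oplus\R\varphi$, where the last two summands are the central directions cut out by $\mytrace$ and $t$ (independent since $\n$ is non-abelian).

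For the equivalence, the engine is the dictionary between the geometry of $\nu\in V$, viewed as the metric nilpotent Lie algebra $(\R^n,\nu,\Ip_{\mathrm{std}})$, and geometric invariant theory. Two facts drive it. First (Lauret), the moment map of the $GL_n(\R)$-action on $V$ at $\nu$ is a positive multiple of the Ricci operator $\Rc_\nu$; hence the moment map of $G_\varphi$ at $\nu$ is the trace-form orthogonal projection of $\Rc_\nu$ onto $\mathfrak g_\varphi$. Second, since $\varphi$ is a derivation of $\mu$ we have $\varphi\cdot\mu=0$ for the induced $\mathfrak{gl}_n$-action, so $\mu$ lies in the zero weight space $V_0=\ker(\varphi|_V)=\{\nu:\varphi\in\Der(\nu)\}$, which is preserved by $Z(\varphi)\supset G_\varphi$; thus $G_\varphi.\mu\subset V_0$, and for $\nu\in V_0$ the map $\varphi$ is a \emph{symmetric} derivation of $(\R^n,\nu,\Ip_{\mathrm{std}})$, so $\Rc_\nu$ commutes with $\varphi$ (a general fact: the Ricci operator of a nilmanifold commutes with every symmetric derivation), i.e.\ $\Rc_\nu\in\mathfrak z(\varphi)$. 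Combining these with the decomposition above: the $G_\varphi$-moment map vanishes at $\nu\in G_\varphi.\mu$ if and only if $\Rc_\nu\in\R\,\Id\oplus\R\varphi$, i.e.\ $\Rc_\nu=a\,\Id+b\varphi$ with $b\varphi\in\Der(\nu)$ — which is exactly the nilsoliton condition for $(\R^n,\nu,\Ip_{\mathrm{std}})$. So \emph{a point of the orbit is a minimal vector for $G_\varphi$ precisely when it is a nilsoliton in the standard inner product.} Now invoke the real Kempf--Ness theorem (Richardson--Slodowy): for the fully reducible group $G_\varphi$, the orbit $G_\varphi.\mu$ is closed in $V$ iff it contains a minimal vector. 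One implication is then immediate: a closed orbit contains some $\nu$ that is a nilsoliton in standard form, and since the $G_\varphi$-action is by Lie-algebra isomorphisms and ``Ricci soliton'' is an isometry invariant, $N\cong(\R^n,\nu)$ admits a nilsoliton.

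The remaining implication, and the main obstacle, is to convert an \emph{abstract} nilsoliton on $N$ into a minimal vector inside the \emph{specific} orbit $G_\varphi.\mu$. Starting from any nilsoliton metric on $N$, realize it in an orthonormal basis as $(\R^n,\mu_{\mathrm{ns}},\Ip_{\mathrm{std}})$ with $\Rc_{\mu_{\mathrm{ns}}}=c\,\Id+D$, $D\in\Der(\mu_{\mathrm{ns}})$ (automatically symmetric); by Propositions~\ref{prop.heb}(iii) and~\ref{prop.nik}, $D$ is a positive multiple of a pre-Einstein derivation of $\mu_{\mathrm{ns}}$. First conjugate $\mu_{\mathrm{ns}}$ by an orthogonal transformation so that this pre-Einstein derivation becomes our fixed diagonal $\varphi$ — possible because any two symmetric matrices with the same spectrum are $O(n)$-conjugate and the spectrum of a pre-Einstein derivation depends only on $\n$; this preserves the standard inner product and hence the nilsoliton property, and now $\Rc_{\mu_{\mathrm{ns}}}\in\R\,\Id\oplus\R\varphi$. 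Next pick a Lie-algebra isomorphism $g$ with $g.\mu_{\mathrm{ns}}=\mu$; since $g\varphi g^{-1}$ is again a pre-Einstein derivation of $\mu$, uniqueness up to $\Aut(\mu)$ lets us replace $g$ by $\tau g$ with $\tau\in\Aut(\mu)$ so that $\tau g\in Z(\varphi)$. Finally, because $Z(\varphi)$ is generated, up to finitely many components (which one absorbs into the freedom in $\Aut$), by $G_\varphi$ together with the central one-parameter groups $e^{s\,\Id}$ (pure scaling of the bracket) and $e^{s\varphi}$ (an automorphism of $\mu$), and because the condition $\Rc\in\R\,\Id\oplus\R\varphi$ is insensitive to both of these, one lands a point of $G_\varphi.\mu$ that is a nilsoliton in the standard inner product, hence a minimal vector; Kempf--Ness then gives that $G_\varphi.\mu$ is closed.

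The delicate points are concentrated in that last paragraph: using the uniqueness of the pre-Einstein derivation, the symmetry of the nilsoliton derivation with respect to the nilsoliton metric and its proportionality to the pre-Einstein derivation, and an analysis of the component group of $Z(\varphi)$ relative to $G_\varphi$, all while tracking the scaling factor $e^{s\,\Id}$ carefully enough that it cancels. By comparison, full reducibility of $G_\varphi$ and the ``closed $\Rightarrow$ nilsoliton'' direction are essentially formal once Lauret's moment-map identity and the real Kempf--Ness theorem are in hand.
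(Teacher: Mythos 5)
This proposition is quoted from Nikolayevsky's paper and is not proved in the present article, so there is no in-paper proof to compare against; judged on its own terms, your reconstruction follows what is in fact Nikolayevsky's route (Lauret's identification of the $GL_n(\R)$-moment map with the Ricci operator, the Richardson--Slodowy real Kempf--Ness theorem for the self-adjoint group $G_\varphi$, and the uniqueness of the pre-Einstein derivation up to automorphism to move an abstract nilsoliton into the orbit), and the argument is essentially sound. Two soft spots are worth tightening. First, the parenthetical claim that $\mytrace$ and $t$ are independent ``since $\n$ is non-abelian'' is false: for a characteristically nilpotent $\n$ every derivation is nilpotent, so $\varphi=0$ and $t\equiv 0$; the decomposition of $\mathfrak z(\varphi)$ then degenerates to $\mathfrak{gl}_n=\mathfrak{sl}_n\oplus\R\,\Id$, and the argument still closes (a minimal vector would be Einstein, hence flat, hence abelian), but the case must be handled separately rather than excluded by non-abelianness. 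Second, ``absorbing the component group of $Z(\varphi)$ into the freedom in $\Aut$'' is too vague as stated; the clean fix is to note that any component representative $\epsilon$ (a sign matrix) normalizes $G_\varphi$, since $\mathfrak g_\varphi$ is cut out of $\mathfrak z(\varphi)$ by the $\Ad$-invariant characters $\mytrace$ and $t$, so $G_\varphi\cdot(\epsilon.\mu)=\epsilon.(G_\varphi\cdot\mu)$ and closedness is unaffected. With those two repairs the proposal is a faithful and complete reconstruction.
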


 \begin{notarem}\label{note.nik}\text{}  
 \begin{enumerate} 
\item Observe that the Lie algebra of the stabilizer of $\mu$ in $G_\varphi$ is precisely $\Der(\n)\cap \mathfrak{sl}(n,\R)$. 
\item  As discussed in \cite{Nikolayevsky:EinsteinSolvmanifoldsandPreEinsteinDerivation}, the group $G_\varphi$ is pre-algebraic; it is the identity component of the algebraic, fully reducible subgroup  $\Gt_\varphi < SL(n,\R)$ given as follows:  Let $V_1,\dots, V_k$ be the eigenspaces of $\varphi$ and $\lambda_1,\dots, \lambda_k$ the corresponding eigenvalues.  By~\ref{prop.nik} the $\lambda_j$ are positive rational numbers.  Let $N$ be the least positive integer for which all the $a_j:=N\lambda_j$ are integers.  The group  $\Gt_\varphi $ is defined by
 $$\Gt_\varphi=  \{(\alpha_1,\dots, \alpha_k)\in\prod_{j=1}^k\,GL(V_j)<GL(\n): \prod_{j=1}^k\,\det(\alpha_j)= \prod_{j=1}^k\,\det(\alpha_j)^{a_j}=1\}.$$
 
 \item The subgroup $G_\varphi$ of $SL(n,\R)$ is self-adjoint with respect to any inner product for which $\varphi$ is symmetric.
\item  We will sometimes abuse language and identify the bilinear map $\mu$ in Proposition~\ref{prop.nik2} with the Lie algebra $\mathfrak n$.   For $X\in\R^n$, we will write $\ad_\mu(X):\R^n\to\R^n$ to mean the linear mapping of $\R^n$ associated with the bracket $\mu$.

\end{enumerate}

   \end{notarem} 
   
 We  conclude this section with a corollary of Propositions~\ref{prop.heb}, \ref{prop.Lau}, and \ref{prop.nik}.   

\begin{cor}\label{cor.nik}
Let $N$ be a simply-connected nilpotent Lie group that admits a nilsoliton metric, and let $\varphi$ be the pre-Einstein derivation of $\n$.  Let $\af$ be any abelian subalgebra of $\Der(\n)$, all of whose elements are semisimple with non-zero real part, and suppose that the pre-Einstein derivation $\varphi$ is given by $A^\R$ for some $A\in \af$.   Then $\af\ltimes\n$ is a solvable Lie algebra of Einstein type.
\end{cor}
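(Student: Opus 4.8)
The plan is to reduce to Heber's extension result, Proposition~\ref{prop.heb}(iv), by conjugating $\af$ into a good position inside a reductive algebra of derivations of $\n$, and then to absorb the ``imaginary'' directions of $\af$ using modification theory of Gordon--Wilson. Some preliminaries first: by hypothesis every nonzero element of $\af$ has nonzero real part, so $X\mapsto X^{\R}$ is injective on $\af$; using~\ref{nota.ss}(2) one checks that $\af^{\R}:=\{X^{\R}:X\in\af\}$ and $\af^{i\R}:=\{X^{i\R}:X\in\af\}$ are abelian subalgebras of $\Der(\n)$ that commute with one another, that $\varphi\in\af^{\R}$, and that $\af$ is the graph of a linear map $\sigma\colon\af^{\R}\to\af^{i\R}$. (Also, since the eigenvalues of $\varphi$ are positive by~\ref{note.nik}, the element $A$ is invertible on $\n$, whence $[\af,\n]=\n$, and every nonzero $X\in\af$ acts on $\af\ltimes\n$ as a nonzero semisimple endomorphism; thus $\n=\nilrad(\af\ltimes\n)$ and $\af+\n$ is a standard decomposition in the sense of~\ref{rem.maxred}.)

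Since $N$ admits a nilsoliton metric, $\n$ is an Einstein nilradical by~\ref{prop.Lau}(2), so some solvable Lie algebra with nilradical $\n$ is of Einstein type; by~\ref{prop.heb}(iii),~\ref{prop.nik}, and uniqueness of the pre-Einstein derivation up to $\Aut(\n)$, after conjugating the solvable action by an automorphism of $\n$ we may take its Einstein derivation to be a positive multiple of $\varphi$. Proposition~\ref{prop.heb}(iv) applied with $\cf=\R\varphi$ then produces an Einstein solvmanifold $(\s_{2}=\R\varphi\ltimes\n,\,g_{2})$ with $\R\varphi\perp_{g_{2}}\n$ (\ref{rem.maxred}). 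By~\ref{prop.heb}(i) the algebra $\mathfrak{l}:=\Der(\s_{2})^{\R\varphi}=\{D\in\Der(\n):[D,\varphi]=0\}$ is reductive, with decomposition $\mathfrak{l}=\kf_{2}\oplus\p_{2}$ into parts skew- and symmetric for $g_{2}|_{\n}$; here $\varphi\in\p_{2}$ (by~\ref{prop.heb}(ii), as $\varphi=\varphi^{\R}$), $\varphi$ is central in $\mathfrak{l}$, and the Einstein derivation of $(\s_{2},g_{2})$ is a positive multiple of $\varphi$.

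Now $\af$ commutes with $A$, hence with $\varphi$, so $\af^{\R}\subseteq\mathfrak{l}$ is an $\R$-split torus containing $\varphi$. Using conjugacy of maximal $\R$-split tori (a maximal abelian subspace of $\p_{2}$ is one such), together with the fact that $\varphi$, being central in $\mathfrak{l}$, is fixed by the entire connected group $L\subseteq\Aut(\n)$ with Lie algebra $\mathfrak{l}$, we obtain $\Phi_{1}\in L$ with $\mathfrak{a}':=\Phi_{1}\af^{\R}\Phi_{1}^{-1}\subseteq\p_{2}$ and $\varphi\in\mathfrak{a}'$. Applying~\ref{prop.heb}(iv) to $(\s_{2},g_{2})$ with $\cf=\mathfrak{a}'$ (an abelian subspace of $\p_{2}$ containing the Einstein derivation) shows $\mathfrak{a}'\ltimes\n$ is of Einstein type; fix an Einstein metric $g_{3}$ on it with $\mathfrak{a}'\perp_{g_{3}}\n$, so $g_{3}|_{\n}$ is a nilsoliton and $\mathfrak{a}'$ acts on it by symmetric operators. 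In particular $\af^{\R}\ltimes\n\cong\mathfrak{a}'\ltimes\n$ is of Einstein type, settling the corollary when $\af=\af^{\R}$. For general $\af$ one has $\af\ltimes\n\cong(\Phi_{1}\af\Phi_{1}^{-1})\ltimes\n$, where $\Phi_{1}\af\Phi_{1}^{-1}$ is the graph of a linear map $\sigma'\colon\mathfrak{a}'\to\Phi_{1}\af^{i\R}\Phi_{1}^{-1}$ whose image is a compact torus in $\mathfrak{m}:=\mathfrak{z}_{\Der(\n)}(\mathfrak{a}')=\Der(\mathfrak{a}'\ltimes\n)^{\mathfrak{a}'}$, which is reductive (\ref{prop.heb}(i)) with $\mathfrak{a}'$ central in it. Writing $\mathfrak{m}=\kf_{3}\oplus\p_{3}$ relative to $g_{3}$ and conjugating by an element $\Phi_{2}$ of the connected group of $\mathfrak{m}$ (which fixes $\mathfrak{a}'$ pointwise, $\mathfrak{a}'$ being central), we may assume $\sigma'$ takes values in $\kf_{3}$, i.e.\ acts on $(\n,g_{3}|_{\n})$ by skew-symmetric derivations. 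Then $(\Phi_{2}\Phi_{1}\af\Phi_{1}^{-1}\Phi_{2}^{-1})\ltimes\n$, with the metric equal to $g_{3}|_{\n}$ on $\n$, to the transport of $g_{3}|_{\mathfrak{a}'}$ on its abelian part (via $Y+\sigma'(Y)\mapsto Y$), and making these orthogonal, is a Gordon--Wilson modification of $(\mathfrak{a}'\ltimes\n,g_{3})$ by the skew-valued map $\sigma'$ and hence isometric to it \cite{GordonWilson:IsomGrpsOfRiemSolv}. It is therefore Einstein, so $\af\ltimes\n$ is of Einstein type.

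The main obstacle is making this last step precise: one must verify that the conjugated $\sigma'$ is a legitimate modification map — its image consisting of commuting derivations that are skew for $g_{3}|_{\n}$ and commute with $\ad(\mathfrak{a}')$ — and invoke \cite{GordonWilson:IsomGrpsOfRiemSolv} to conclude that the modified metric solvmanifold is isometric to the original. The conjugations also require care for two reasons: the element splitting off $\af^{\R}$ must fix the central derivation $\varphi$ (so that $\cf=\mathfrak{a}'$ still contains the Einstein derivation), and the element making $\sigma'$ skew must fix $\mathfrak{a}'$ pointwise (so that the ``graph'' structure, and the nilsoliton $g_{3}|_{\n}$, are preserved).
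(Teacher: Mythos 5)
Your proposal is correct and follows essentially the same route as the paper's proof: establish that $\R\varphi\ltimes\n$ is of Einstein type, conjugate so that $\af^{\R}$ lies in the symmetric part and apply Proposition~\ref{prop.heb}(iv) to get an Einstein metric on $\af^{\R}\ltimes\n$, then absorb $\af^{i\R}$ as skew-symmetric (isometric) derivations and observe that the graph $\af+\n$ acts simply transitively on the resulting Einstein solvmanifold. The only difference is cosmetic: the paper aligns $\af^{\R}$ and $\af^{i\R}$ simultaneously with a single conjugation of the metric via Mostow's compatible Cartan decompositions (\ref{cd}), whereas you conjugate the algebra in two stages (split-torus conjugacy, then maximal-compact conjugacy), which amounts to the same thing.
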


Since the corollary requires a slight strengthening of Proposition~\ref{prop.heb}(iv), we include the proof here after first recalling results of Mostow concerning Cartan decompositions.   For later use, we state Mostow's results in greater generality than needed for the proof of Corollary~\ref{cor.nik}.

\begin{notarem}\label{cd} In the language of Mostow \cite{Mostow:SelfAdjointGroups}, an ``fcc'' group is a Lie group with finitely many connected components.   If $\hat{H}$ is an fcc group, every maximal compact subgroup $\hat{K}$ of $\hat{H}$ satisfies $\hat{H}=H\hat{K}$ where $H$ is the identity component of $\hat{H}$.   In particular, $K:=\hat{K}\cap H$ is a maximal compact subgroup of $H$ and $\hat{K}/K$ is finite.  Any two maximal compact subgroups of $\hat{H}$ are conjugate via an element of $H$.    Generalizing the language of semisimple Lie group theory, we say that $\hat{H}=\hat{K}P$ is a \emph{Cartan decomposition} of $\hat{H}$ if: (i) $\hat{K}$ is a maximal compact subgroup of $\hat{H}$ and (ii) there exists a compact real form $\cf$  of the complexification $\h^\C$ of $\h$ such that $\kf=\h\cap \cf$ and $P=\exp(\p)$, where $\p=\h\cap i\cf$.  (Here $i=\sqrt{-1}$, and $\h$ and $\kf$ are the Lie algebras of $\hat{H}$ and $\hat{K}$, respectively.)  The existence of a Cartan decomposition implies that $\hat{H}$ is a reductive Lie group.  However, not every reductive fcc group admits a Cartan decomposition.   If a Cartan decomposition does exist, it is unique up to conjugation by elements of the identity component $H$ of $\hat{H}$.

Theorem 4.1 of \cite{Mostow:SelfAdjointGroups} states:   If $\hat{H}$ and $\hat{H}'$ are fcc Lie groups with $\hat{H}<\hat{H}'$ and if both $\hat{H}$ and $\hat{H}'$ admit Cartan decompositions, then for each Cartan decomposition $\hat{H}=\hat{K}P$, there exists a compatible Cartan decomposition  $\hat{H}'=\hat{K}'P'$.  By \emph{compatible} we mean that $\hat{K}<\hat{K}'$ and $P<P'$.    
\end{notarem}

\begin{proof}[Proof of Corollary~\ref{cor.nik}] Let $\af_0$ be the one-dimensional Lie algebra $\R\varphi$ where $\varphi$ is the pre-Einstein derivation of $\n$.   By Propositions~\ref{prop.Lau} and~\ref{prop.nik}, $\s_0:=\af_0\ltimes\n$ is a solvable Lie algebra of Einstein type.  Let $g_0$ denote both an Einstein metric on $\s_0$ and its restriction to a nilsoliton metric on $\n$.  By~\ref{nota.ss} and the hypotheses that $\af$ is abelian and that $\varphi=A^\R$ for some $A\in\af$, we have $\af<\Der(\n)^{\af_0}$.  Again by~\ref{nota.ss}, $X^\R$ and $X^{i\R}\in \Der(\n)^{\af_0}$ for all $X\in\af$.   
Let $\bfrak=\af^\R+\af^{i\R}$, where $\af^\R=\{X^\R:X\in\af\}$ and similarly for $\af^{i\R}$.  Note that $\bfrak=\af^{i\R}+\af^{\R}$ is a Cartan decomposition of $\bfrak$.   By~\ref{cd}, there exists a Cartan decomposition $\kf+\p$ of $\Der(\n)^{\af_0}$ such that $\af^{i\R}\subset \kf$ and $\af^\R \subset \p$.   By the conjugacy of Cartan decompositions and Proposition~\ref{prop.heb}, there exists $\tau\in \Aut(\n)^{\af_0}$ such that the elements of $\kf$, respectively $\p$, are skew-symmetric, respectively symmetric, with respect to the nilsoliton metric $\tau^*(g_0)$ on $\n$.   Since $\tau(\varphi)=\varphi$, the Einstein derivation of $(\s_0,g)$ is again $\varphi$.  By Proposition~\ref{prop.heb}, $\rf:=\af^\R+\n$  is of Einstein type.  Let $M$ be the associated simply-connected Einstein manifold.   
Since $\af^{i\R}$ acts skew-symmetrically, the isometry algebra of $(R,g)$ contains $\af^{i\R}+\rf=\bfrak+\n$, and it is easy to see that the simply-connected Lie group $S$ with Lie algebra $\af+\n<\bfrak+\n$ acts simply-transitively on $M$.  Thus $\af+\n$ is of Einstein type.

\end{proof}

\subsection{Isometry groups of solvmanifolds}\label{subsec.isom}

We review results of \cite{GordonWilson:IsomGrpsOfRiemSolv} concerning the structure of isometry groups of arbitrary solvmanifolds.   We will restrict our attention here to simply-connected solvmanifolds, since all solvable Lie groups of Einstein type are simply-connected.   

\begin{defin}\label{def.stdpos} Let $(\mathcal{M}, h)$ be a simply-connected Riemannian solvmanifold, let $G=I_0(\mathcal{M})$ be the identity component of the full isometry group of $\mathcal{M}$, and let $\g$ be the Lie algebra of $G$.  Let $\Rcal=\Rcal(h)$ denote the collection of all simply-transitive solvable subgroups of $G$.   Fix once and for all a base point $p\in \Mcal$.   For $R\in\Rcal$, we will continue to denote by $h$ the left-invariant Riemannian metric on $R$ defined by identifying $R$ with $\Mcal$ via $a\mapsto a(p)$ for $a\in R$.
\begin{enumerate}
\item    For $R\in\Rcal$, recall that the normalizer $ N_\g(\rf)$ of $\rf$ in $\g$ is given by the semi-direct sum $N_\g(\rf)=\Der_{\operatorname{skew}}(\rf,h)+\rf$ where $\Der_{\operatorname{skew}}(\rf,h)$ is the space of skew-symmetric derivations of $(\rf,h)$.    The \emph{standard modification} $\rf'$ of $\rf$ with respect to $h$ is defined to be the orthogonal complement in $N_\g(\rf)$ of $\Der_{\operatorname{skew}}(\rf,h)$ with respect to the Killing form.   The connected subgroup $R'$ of $G$ with Lie algebra $\rf'$ will also be called the standard modification of $R$.  Observe that $R'\in\Rcal$.

\item We say that $R$ (or its Lie algebra $\rf$) is in \emph{standard position} in $G$ with respect to $h$ if it is equal to its own standard modification.

\end{enumerate}
\end{defin}

\begin{notarem}\label{remf} (See \cite{GordonWilson:IsomGrpsOfRiemSolv}.)  
\begin{enumerate}
\item Let $\mathcal{F}$ be the collection of subgroups of $G$ that are maximal with respect to the property of having no non-trivial connected noncompact simple subgroups.  Then the elements $F\in \mathcal{F}$ form a conjugacy class of subgroups of $G$ given as follows:  Let $G=G_1G_2$ be any Levi decomposition of $G$ and write $G_1=G_{nc}G_c$, where $G_{nc}$ and $G_c$ are the maximal semsimple connected normal subgroups of $G$ of noncompact and compact type, respectively.  Let $G_1= K_1A_1N_1$ be any Iwasawa decomposition of $G_1$ (in particular, $G_c<K_1$), and let $M_1$ be the centralizer of $A_1$ in $K_1\cap G_{nc}$.  Set
\begin{equation}\label{eqf}F=(M_1A_1N_1)G_cG_2.\end{equation}
Then $F\in\mathcal{F}$, and every element of $\mathcal{F}$ is of this form.
\item A subgroup $S_1$ of $G$ of the form $S_1=A_1N_1$, where $K_1A_1N_1$ is an Iwasawa decompostion of some semisimple Levi factor of $G$, will be called an \emph{Iwasawa subgroup} of $G$.
\item In the notation of (i), the group $K_1$ is compact if and only if $G_{nc}$ has finite center.  This condition is equivalent to the condition that the Lie algebra of some, hence any, maximal compact connected subgroup of $G$ is a maximal compactly embedded subalgebra of $\g$.  In this case, every maximal compact subgroup $U$ of $G$ is of the form $U=K_1(U\cap G_2)$ relative to some Levi and Iwasawa decompositions as in (i).  
\end{enumerate}
\end{notarem}

\begin{prop}\label{prop.f}\cite{GordonWilson:IsomGrpsOfRiemSolv}  Let $\mathcal{M}$ be a simply-connected Riemannian solvmanifold and let $G=I_0(\mathcal{M})$ be the identity component of the full isometry group of $\mathcal{M}$.  Then:
\begin{enumerate}
\item The collection of all simply transitive solvable subgroups of $G$ in standard position with respect to $h$ forms a (non-empty) conjugacy class $\mathcal{S}=\Scal(h)$ of subgroups of $G$.
\item  Given $R\in\Rcal$, let $R'$ be the standard modification of $R$ and let $R''$ be the standard modification of $R'$.   Then $R''$ is in standard position in $G$ with respect to $h$, and the normalizer of $R''$ in $G$ contains that of $R$.   

\item (See the notation of~\ref{remf}.)  For $S\in\mathcal{S}$, the normalizer $N_G(S)$ is an element of $\mathcal{F}$. Conversely, given any $F\in \mathcal{F}$, there exists  $S\in\mathcal{S}$ such that $N_G(S)=F$.   

\item Let $S\in\mathcal{S}$ and let $G=G_1G_2$ and $G_1=K_1A_1N_1$ be the Levi and Iwasawa decompositions associated with $F=N_G(S)$ as in~\ref{remf}.  Then the Lie algebra of $S$ satisfies $$\af_1+\n_1+[\g,\g_2]\subset \s\subset Z(\mf_1)+  \af_1 +\n_1+\g_2,$$ where $Z(\m_1)$  is the center of the Lie algebra $\m_1$ of $M_1$.   In particular, $S$ contains an Iwasawa subgroup of $G$.
  
\end{enumerate}
\end{prop}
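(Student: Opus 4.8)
This is the main structural theorem of \cite{GordonWilson:IsomGrpsOfRiemSolv}, and the plan is to derive it from the general theory of transitive isometric actions on solvmanifolds together with Mostow's conjugacy theorems for fully reducible and self-adjoint linear groups recalled in~\ref{cd}. Throughout, fix $G=I_0(\Mcal)$, a base point $p$, and the (compact) isotropy subgroup $L=\Stab_G(p)$, so that $\g=\rf\oplus\lf$ as vector spaces for every $R\in\Rcal$; also fix a Levi decomposition $G=G_1G_2$ with $G_2=\operatorname{Rad}(G)$, its refinement $G_1=G_{nc}G_c$ into noncompact and compact semisimple normal factors, an Iwasawa decomposition $G_1=K_1A_1N_1$, the centralizer $M_1$ of $A_1$ in $K_1\cap G_{nc}$, and $F=(M_1A_1N_1)G_cG_2\in\mathcal F$ as in~\ref{remf}.

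\emph{The modification map.} First I would record the normalizer identity $N_\g(\rf)=\Der_{\operatorname{skew}}(\rf,h)+\rf$ used in Definition~\ref{def.stdpos}: an element of $G$ normalizing $R$ and fixing $p$ acts on $\rf\cong T_p\Mcal$ as an isometric automorphism of $(\rf,h)$, hence as a skew-symmetric derivation, while conversely every skew-symmetric derivation of $(\rf,h)$ integrates to an element of $G$ since it preserves both $h$ and the simply transitive $R$-action. This makes the standard modification $\rf'$ — the Killing-orthocomplement of $\Der_{\operatorname{skew}}(\rf,h)$ in $N_\g(\rf)$ — a well-defined solvable subalgebra; one checks that it still projects isomorphically onto $T_p\Mcal$, so $R'\in\Rcal$, and that $N_G(R')$ contains a canonical reductive complement to $\rf'$.

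\emph{Stabilization under modification, part (ii) --- the main obstacle.} The crux is to show that a second modification returns $\rf'$, i.e. that $R''=(R')'$ is in standard position, and that $N_G(R'')\supseteq N_G(R)$. Here I would track how the nilradical of $\rf$ and a reductive complement sit relative to the Levi decomposition of $\g$: after one modification the reductive complement of $\rf'$ in $N_\g(\rf')$ becomes fully reducible, indeed self-adjoint with respect to $h$, and then Mostow's conjugacy of maximal fully reducible / self-adjoint subgroups (\ref{cd}, \cite{Mostow:FullyReducibleSubgrpsOfAlgGrps,Mostow:SelfAdjointGroups}) forbids any further shrinking, so the iteration halts. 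The normalizer inclusion is then bookkeeping, since each modification only enlarges the skew-derivation summand. This is the step I expect to consume most of the work.

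\emph{Normalizers, the pinch (iv), and conjugacy (i), (iii).} For $S\in\Scal$, the standard-position condition kills the noncompact simple part of $N_G(S)$ while its compact part is forced to be maximal of compact type and its solvable part to be $A_1N_1$ together with the radical; this identifies $N_G(S)$ with a group of the form~\eqref{eqf}, so $N_G(S)\in\mathcal F$, and conversely each $F\in\mathcal F$ arises by choosing inside $F$ a maximal solvable subgroup acting simply transitively on $\Mcal$ and verifying it is modification-stable. For (iv): $\mf_1$ is the Lie algebra of the compact group $M_1$, hence compact reductive, and its semisimple summand $[\mf_1,\mf_1]$ must lie in the isotropy algebra $\lf$, giving $\s\subseteq Z(\mf_1)+\af_1+\n_1+\g_2$; for the reverse containment, a solvable subgroup acting transitively cannot omit the Iwasawa directions $\af_1+\n_1$ (they carry the entire noncompact-semisimple part of the action) nor the ideal $[\g,\g_2]$ (modulo which the radical is central in $\g$), giving $\af_1+\n_1+[\g,\g_2]\subseteq\s$. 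Finally, for (i): $\Scal\neq\emptyset$ because $\Mcal$ is a solvmanifold (so $\Rcal\neq\emptyset$) and double modification lands in $\Scal$; and $\Scal$ is a single conjugacy class because $\mathcal F$ is one --- a statement about $G$ alone, from conjugacy of Levi factors, of the maximal compact and noncompact semisimple normal subgroups, and of Iwasawa decompositions --- while, for a fixed $F\in\mathcal F$, the standard-position subgroups with normalizer $F$ are mutually conjugate by elements of $F$ after reducing modulo the semisimple and compact directions and again applying Mostow's conjugacy (\ref{cd}).
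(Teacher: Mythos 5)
This proposition is quoted verbatim from Gordon--Wilson \cite{GordonWilson:IsomGrpsOfRiemSolv} as background; the paper contains no proof of it, so there is nothing internal to compare against. Your outline does follow the broad strategy of that reference (iterated standard modifications, Mostow's conjugacy theorems for maximal compact, fully reducible, and self-adjoint subgroups), so the \emph{approach} is the right one.

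As written, however, this is a plan rather than a proof, and it leaves exactly the hard steps unexecuted. (1) For part (ii) you say that after one modification the reductive complement ``becomes fully reducible, indeed self-adjoint with respect to $h$, and then Mostow's conjugacy \dots forbids any further shrinking,'' but you give no argument for why the complement becomes self-adjoint, nor for why conjugacy of maximal self-adjoint subgroups forces the iteration to halt at the second step. Moreover, your claim that ``each modification only enlarges the skew-derivation summand'' is not clear even for the first modification: an element $\Ad(r)$ with $r\in R\subset N_G(R)$ preserves $N_\g(\rf)$ and the Killing form but does \emph{not} preserve $\lf$, so it is not evident that $N_G(R)$ normalizes $R'$ --- this is precisely why the statement concerns $R''$ rather than $R'$, and it requires a genuine argument. (2) In part (iv) you assert that $M_1$ is compact and that $[\mf_1,\mf_1]\subset\lf$; the first is false in general ($K_1$, hence $M_1$, is compact only when $G_{nc}$ has finite center, cf.~\ref{remf}), and the upper bound $\s\subset Z(\mf_1)+\af_1+\n_1+\g_2$ does not follow merely from solvability of $\s$, since a solvable subalgebra of $\ff$ could a priori sit diagonally across $\mf_1$, $\g_c$ and the radical. (3) The conjugacy in (i) is reduced to the assertion that standard-position subgroups with a common normalizer $F$ are $F$-conjugate ``after reducing modulo the semisimple and compact directions and again applying Mostow'' --- a statement of intent, not an argument. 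Since you yourself flag (ii) as ``the step I expect to consume most of the work'' and then do not do that work, the proposal cannot stand as a proof; it is a reasonable reading guide to the cited paper.
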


\section{The Main Theorem}\label{main}

Our goal is to prove the following theorem:

\begin{thm}\label{mainthm}
Let $R$ be a solvable Lie group of Einstein type and $h$ a left-invariant Riemannian metric on $R$.   Let $\Gt=\Isom(R,h)$ be the full isometry group  and let $\Lt$ be the isotropy subgroup of $\Gt$ at the identity $e\in R$.   Then $\Gt/\Lt$ admits a $\Gt$-invariant Einstein metric.

\end{thm}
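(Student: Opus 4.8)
The plan is to reduce the theorem to Lemma~\ref{lem} by putting the solvable group into standard position and exhibiting the correct semidirect product structure, and then to bootstrap the Einstein metric produced by the easy direction of the lemma into a $\Gt$-invariant one using the maximal symmetry of nilsolitons. First I would pass from $R$ to a solvable group $S$ in standard position in $G:=\Isom_0(R,h)$ with respect to $h$; by Proposition~\ref{prop.f}(i)--(ii), such an $S$ exists, lies in the conjugacy class $\Scal(h)$, and has normalizer $F:=N_G(S)\in\mathcal F$. Since $R$ is of Einstein type, Proposition~\ref{prop.heb} tells us every Einstein metric on $R$ is unique up to scaling and isometry, and using the description of the standard modifications one checks that $S$ itself carries an Einstein metric isometric to the one on $R$; hence $S$ is also of Einstein type. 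Writing the Levi and Iwasawa decompositions $G=G_1G_2$, $G_1=K_1A_1N_1$ associated to $F$ as in~\ref{remf}, Proposition~\ref{prop.f}(iv) shows $\s$ sits between $\af_1+\n_1+[\g,\g_2]$ and $Z(\mf_1)+\af_1+\n_1+\g_2$, so $S_1:=A_1N_1$ is an Iwasawa subgroup contained in $S$ and $S_2:=S\cap\operatorname{Rad}(G)$ is a normal solvable subgroup with $S=S_1\ltimes S_2$. The adjoint action of $S_1\subset G_1$ on $\g_2\supset\s_2$ is the restriction of the $G_1$-action coming from the semisimple structure, so it extends to a representation of $G_1$ on $\operatorname{Lie}(S_2)$; thus both hypotheses of Lemma~\ref{lem} hold for the decomposition $S=S_1\ltimes S_2$.

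Next, since $S$ is of Einstein type, the ``only if'' direction of Lemma~\ref{lem} gives that $S_2$ is of Einstein type, and Proposition~\ref{prop.Lau}(ii) gives a nilsoliton metric on the nilradical $N_2$ of $S_2$. Now I would invoke maximal symmetry: by Corollary~\ref{cor.pos} (third bullet) or Proposition~\ref{completely_solv}, the nilsoliton metric on $N_2$ is maximally symmetric among left-invariant metrics on $N_2$, and by Proposition~\ref{prop.Lau}(iii) it is unique up to automorphism and scaling. Because $S_2$ — and hence $N_2$ — is normal in $G$, the group $\Ad(G)$ normalizes $\Der(\n_2)$ and acts on the set of nilsoliton inner products on $\n_2$; combining this with the uniqueness up to automorphism, I would produce (after replacing everything by a suitable automorphism of $S$, equivalently conjugating $S$ inside $G$) a nilsoliton inner product on $\n_2$ that is $\Ad(\Lt)$-invariant and with respect to which $\Ad(G_1)$ acts by self-adjoint transformations — the self-adjointness following because $\Ad(G_1)$ restricted to $\n_2$ normalizes the relevant reductive group $G_{\varphi_2}$ and one can arrange the pre-Einstein derivation to be symmetric, so Mostow's conjugacy of Cartan decompositions (as recalled in~\ref{cd}) forces the self-adjoint form. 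This nilsoliton metric on $N_2$ determines, via Proposition~\ref{prop.heb} and the easy direction of Lemma~\ref{lem}, an Einstein metric $g$ on $S$ whose restriction to $S_2$ is Einstein, whose restriction to $S_1$ is symmetric, and for which $\s_1\perp\s_2$.

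Finally I would check that this Einstein metric $g$ on $S$ extends to a $\Gt$-invariant (equivalently $G$-invariant) metric on $G/L=\Gt/\Lt$, i.e.\ that the corresponding inner product on $\s$ is $\Ad(L)$-invariant and that $G$ acts by isometries. The isometry-extension criterion here is the standard one (as used in~\cite{GordonWilson:IsomGrpsOfRiemSolv}): one needs $\Ad(L)$ to preserve the inner product and the ``normalizer'' directions to act skew-symmetrically. By construction the restriction to $\n_2$ is $\Ad(L)$-invariant; the symmetric-space metric on $S_1$ is invariant under $M_1A_1N_1$ acting by isometries of $G_1/K_1$ and $K_1$ acts skew-symmetrically; and the orthogonality $\s_1\perp\s_2$ together with the self-adjointness of $\Ad(G_1)$ on $\n_2$ is exactly what is needed to see that the semisimple part $G_1$ and the compact isotropy directions in $L$ act by isometries on the whole of $S$. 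Assembling these pieces shows $\Isom(S,g)\supset G$, hence $g$ descends to a $\Gt$-invariant Einstein metric on $\Gt/\Lt$. The main obstacle I anticipate is the middle step: getting the nilsoliton on $\n_2$ into the simultaneously $\Ad(\Lt)$-invariant and $\Ad(G_1)$-self-adjoint normal form, since one must carefully combine the maximal-symmetry/uniqueness statement for nilsolitons with Mostow's conjugacy theorem for maximal compact subgroups and Cartan decompositions, keeping track of which automorphism of $S$ is being applied so that the extension produced by Lemma~\ref{lem} really does have $G$ in its isometry group (possibly after that one automorphism, as the statement of the Main Theorem allows).
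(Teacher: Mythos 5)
Your proposal follows essentially the same route as the paper: reduce to a group $S=S_1\ltimes S_2$ in standard position, apply the Key Lemma to conclude $S_2$ is of Einstein type, normalize the nilsoliton on $N_2$ via maximal symmetry, uniqueness, and Mostow's conjugacy of Cartan decompositions so that $\Lt$ acts orthogonally and $G_1$ acts self-adjointly, and then extend. The only step you treat as a black box --- that the resulting metric on $S$, symmetric on $\s_1$ and Einstein on $\s_2$, is actually Einstein and $G$-invariant --- is where the paper does its remaining work (a Lauret--Lafuente Ricci computation forcing a specific scaling $\alpha_i$ of the symmetric metric on each noncompact simple factor of $\g_1$), but that is precisely the content of the ``if'' direction of Lemma~\ref{lem} which you are entitled to invoke.
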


Since $R$ acts simply-transitively on $\Gt/\Lt$, it will follow that $R$ admits an Einstein metric whose isometry group contains that of $h$, thus proving the Main Theorem~\ref{mt}.

The proof of Theorem~\ref{mainthm} has three main parts:

\begin{enumerate}
\item We apply a result of the second author to show that the normalizer of $R$ (or of any simply transitive solvable subgroup of $\Gt$ in $\Gt$)  leaves an Einstein metric invariant.   This result along with a study of the structure of the isometry groups of left-invariant metrics on solvable Lie groups of Einstein type enables us to reduce the theorem to Lemma~\ref{lem}. 

\item We use Ricci curvature computations to prove the ``if'' statement and the final statement of Lemma~\ref{lem}.
\item We apply Nikolayevsky's technique, as outlined in Subsection~\ref{subsec.nik} to prove the forward statement of Lemma~\ref{lem}.

\end{enumerate}

In this section we carry out the first two parts of the proof.  
\subsection{Part (i) of the proof.}\label{subsec.part1}

In the notation of Theorem~\ref{mainthm}, we first show that the normalizer of $R$ in $\Gt$ leaves an Einstein metric invariant.   Recall that the normalizer of $R$ in $\Gt$ is the group $\Aut_{\operatorname{orth}}(R,h)$ of orthogonal automorphisms of $(R,h)$. 
\begin{prop}\label{prop.step1}  Let $R$ be a solvable Lie group that admits an Einstein metric and let $h$ be any left-invariant metric on $R$.   Then there exists an Einstein metric $g$ on $R$ such that 
$$\Isom(R,h)\cap \Aut(R)\subset \Isom(R,g).$$
\end{prop}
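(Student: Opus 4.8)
The plan is to produce the Einstein metric $g$ as a pullback of an arbitrary Einstein metric on $R$ by an automorphism, chosen so that it absorbs the (compact) automorphism part of $\Isom(R,h)$. First I would fix an Einstein metric $g_0$ on $R$; by Proposition~\ref{prop.Lau}(i) and Remark~\ref{rem.maxred}, $g_0$ is standard, so we may write $\s=\af+\n$ with $\af\perp_{g_0}\n$, $\n$ the nilradical, and $\af$ a maximal fully $\ad$-reducible subalgebra. By Proposition~\ref{prop.heb}(i), the space $\Der(\s)^\af$ decomposes as $\kf+\p$ with $\kf$ skew-symmetric and $\p$ symmetric relative to $g_0$; in particular the group $\Aut(R)^\af$ generated by $\exp(\Der(\s)^\af)$ is a reductive subgroup of $\Aut(R)$ with $\exp(\kf)$ a maximal compact subgroup (up to finite index / components). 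The key external input is that the normalizer of $R$ in $\Isom(R,h)$ — which is exactly $\Isom(R,h)\cap\Aut(R)=\Aut_{\operatorname{orth}}(R,h)$ — is a \emph{compact} subgroup of $\Aut(R)$, since it consists of orthogonal transformations of the inner product space $(\s,h)$; this is the point where I would invoke the structure theory of \cite{GordonWilson:IsomGrpsOfRiemSolv}, together with the fact that $R$ is simply connected and of Einstein type so that $\Aut(R)\cong\Aut(\s)$.

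Next I would argue that $\Aut_{\operatorname{orth}}(R,h)$, being compact, lies inside a maximal compact subgroup of $\Aut(R)$, and that (after replacing $g_0$ by $\sigma^*g_0$ for a suitable $\sigma$, which is harmless since pullbacks of Einstein metrics are Einstein) we may take that maximal compact subgroup to be one for which $g_0$ is invariant. Concretely: Heber's theorem gives that $g_0$ is invariant under $\exp(\kf)$, and by enlarging if necessary we get a maximal compact $U_0<\Aut(R)$ with $U_0$-invariant $g_0$; all maximal compact subgroups of $\Aut(R)$ are conjugate (using that $\Aut(\s)$ has finitely many components when $\s$ is of Einstein type — this follows because $\Aut(\s)$ is the real points of an algebraic group), so there is $\tau\in\Aut(R)$ with $\tau\,\Aut_{\operatorname{orth}}(R,h)\,\tau^{-1}<U_0$. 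Then $g:=\tau^*g_0$ is an Einstein metric, it is invariant under $\tau^{-1}U_0\tau\supset\Aut_{\operatorname{orth}}(R,h)$, and since $g$ is left-invariant (automorphism pullbacks of left-invariant metrics are left-invariant), $R$ itself acts isometrically, so $\Isom(R,g)\supset R\cdot\Aut_{\operatorname{orth}}(R,h)\supset\Isom(R,h)\cap\Aut(R)$. That gives the claimed inclusion.

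The main obstacle I anticipate is the finiteness-of-components / conjugacy-of-maximal-compacts step: one needs that $\Aut(\s)$ (equivalently $\Aut(R)$, as $R$ is simply connected) has finitely many connected components so that Mostow-type conjugacy of maximal compact subgroups applies — this is why the excerpt emphasizes simple connectivity throughout Subsection~\ref{subsec.isom}. A secondary technical point is verifying that $\Aut_{\operatorname{orth}}(R,h)$ is genuinely the full normalizer $N_{\Isom(R,h)}(R)$ and is compact; this is precisely the content of the Gordon--Wilson description of isometry groups of solvmanifolds, so I would cite \cite{GordonWilson:IsomGrpsOfRiemSolv} for it rather than reprove it. Everything else — that pullbacks preserve the Einstein condition and left-invariance, and that Heber's decomposition exhibits a maximal compact subgroup under which $g_0$ is invariant — is routine.
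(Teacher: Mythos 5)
The overall architecture of your argument---reduce the statement to the claim that \emph{some} Einstein metric is invariant under a maximal compact subgroup of $\Aut(R)$, then conjugate the compact group $\Isom(R,h)\cap\Aut(R)=\Aut_{\operatorname{orth}}(R,h)$ into that maximal compact subgroup---is coherent, and the surrounding bookkeeping (compactness of $\Aut_{\operatorname{orth}}(R,h)$ as a closed subgroup of $O(\s,h)$, finiteness of components of the real algebraic group $\Aut(\s)$, conjugacy of maximal compacts, and the fact that automorphism pullbacks preserve both left-invariance and the Einstein condition) is fine. The gap is the sentence ``Heber's theorem gives that $g_0$ is invariant under $\exp(\kf)$, and by enlarging if necessary we get a maximal compact $U_0<\Aut(R)$ with $U_0$-invariant $g_0$.'' Enlarging the compact group $\exp(\kf)$---or even the full group $K_0=\Aut(\s)\cap O(\s,g_0)$---to a maximal compact subgroup $U_0$ does \emph{not} preserve invariance of $g_0$: any element of $U_0\setminus K_0$ is by definition not a $g_0$-isometry. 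What your argument actually requires is that $K_0$ is \emph{already} maximal compact in $\Aut(\s)$, equivalently that some Einstein metric is invariant under a maximal compact subgroup of $\Aut(R)$; but that statement is essentially equivalent to the proposition being proved (it is Proposition~\ref{prop.full_aut}(i), which the paper deduces \emph{from} Proposition~\ref{prop.step1}, so invoking it would be circular). Heber's decomposition $\Der(\s)^{\af}=\kf+\p$ only exhibits $\kf$ as a maximal compact subalgebra of the reductive algebra $\Der(\s)^{\af}$, which is a proper subalgebra of $\Der(\s)=\ad_\s(\n)+\Der(\s)^{\af}$; even granting a Lie-algebra-level maximality argument, one must still control the extra components of $U_0$ relative to $K_0$, whose elements could a priori carry $g_0$ to a different Einstein metric. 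Heber's uniqueness is only up to isometry and scaling, not up to automorphism, so it does not close this loop.

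The paper's proof supplies precisely the missing ingredient by importing it from \cite{Jablo:ConceringExistenceOfEinstein}: for an \emph{arbitrary} left-invariant metric $h$ there is a ``distinguished'' metric $g_0$ with $\Aut_{\operatorname{orth}}(R,h)\subset\Isom(R,g_0)$ (the first step of the proof of Theorem 5.8 there), and orthogonal automorphisms are then transferred from $g_0$ to the associated Einstein metric via the explicit formula $g(A,A)=c\,\mytrace(S_A)^2$ on $\af$. The real content---that a soliton/Einstein metric can be normalized so that its orthogonal automorphism group contains a prescribed compact group of automorphisms---rests on the geometric invariant theory of nilsolitons (minimal vectors and their stabilizers), not on Heber's structure theory plus conjugacy of maximal compacts. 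To repair your proof while keeping its architecture, you would need either to cite that result directly or to give an independent argument that $\Aut(\s)\cap O(\s,g_0)$ is a maximal compact subgroup of $\Aut(\s)$ for a suitable Einstein metric $g_0$.
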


\begin{remark} We note that the proposition above holds more generally for solvsolitons, with the same proof, but we will not need this fact.
\end{remark}

\begin{proof} We apply results of \cite{Jablo:ConceringExistenceOfEinstein}.  (That article addresses the more general setting of solvable Lie groups admitting  solvsolitons but we only apply it to the special case of those admitting Einstein metrics.)  There is a natural correspondence between Einstein (or solvsoliton) metrics on $R$ and so-called \emph{distinguished metrics}.  Letting $\n$ denote the nilradical of $\rf$, a distinguished metric and the corresponding Einstein metric agree on $\n$ (and restrict to a nilsoliton metric on $\n$), and the orthogonal complement of $\n$ relative to both metrics is the same abelian algebra $\af$.  The two metrics differ only on $\af$.  The expression for the distinguished metric on $\af$ will not be needed below.   The Einstein metric is given on $\af$ by
	\begin{equation}\label{eq.la}g(A,A)=c~\mytrace(S_A)^2 
	\end{equation}
where $c$ is a constant and where $S_A$ is the symmetric part of $\ad(A)|_{\n}$ with respect to the given nilsoliton metric on $\n$; i.e., $S_A=\frac{1}{2}(\ad(A)|_{\n}+\ad(A)|_{\n}^t)$.

Theorem 4.1 of \cite{Jablo:ConceringExistenceOfEinstein} (stated as Proposition~\ref{completely_solv} above) states that left-invariant  solvsoliton metrics on completely solvable unimodular Lie groups are maximally symmetric.   In our case, $R$ is not assumed to be either completely solvable or unimodular.   However, the first step in the proof of Theorem 5.8 of \cite{Jablo:ConceringExistenceOfEinstein} applies to \emph{all} solvable Lie groups that admit solvsoliton metrics.  It asserts that for any left-invariant metric $h$ on $R$, there exists a distinguished metric $g_0$ such that 
	$$\Aut_{\operatorname{orth}}(R,h)\subset \Isom(R,g_0).$$
Here $\Aut_{\operatorname{orth}}(R,h)$ denotes the group $\Aut (R) \cap \Isom(R,h)$.  As $R$ is simply-connected, this group corresponds precisely to the orthogonal automorphisms of the Lie algebra, $\Aut(\rf)\cap O(\rf,h)$.

To complete the proof, we need only show that $\Aut_{\operatorname{orth}}(R, g_0)\subset \Aut_{\operatorname{orth}}(R,g)$ where $g$ is the Einstein metric associated with $g_0$.    
Let $\tau\in \Aut(\rf)\cap O(\rf,g_0)$, and let $A\in\af$.  By Proposition~\ref{prop.heb}, we have $\ad(A)|_{\n}=S_A + T_A$, where $T_A$ is a skew-symmetric derivation of $\n$ and $S_A$, as defined above, is a symmetric derivation.  Moreover, $S_A$ and $T_A$ both commute with $\ad(A)$ and hence with each other. 
 Since $\tau\in \Aut(\rf)$, we have $\tau|_{\n}\circ \ad(A)|_{\n}=\ad(\tau(A))|_{\n}\circ \tau|_{\n}$, so $\ad(\tau(A))|_{\n}=\tau|_{\n}\circ \ad(A)|_{\n}\circ \tau|_{\n}^{-1}$.  Since $\tau|_{\n}$ is orthogonal, we also have that $S_{\tau(A)}=\tau|_{\n}\circ S_A\circ \tau|_{\n}^{-1}$. 
It is now immediate from 
Equation~(\ref{eq.la}) that  $\tau\in O(\rf,g)$.    
\end{proof} 

Restricting our attention to the identity component $G$ of $\Gt$ for now, we next apply Proposition~\ref{prop.step1} to describe the subgroups of $G$ in standard position, in the language of Definition~\ref{def.stdpos}. 

\begin{lemma}\label{lem.stdmodEinst} Let $R$ be a solvable Lie group of Einstein type and let $h$ be an arbitrary but fixed left-invariant metric on $R$ (not necessarily an Einstein metric).  Let $G=I_0(R,h)$ and let $L$ be the isotropy subgroup of $G$ at the identity element.   We use the notation of~\ref{remf} and Proposition~\ref{prop.f}.   Then:
\begin{enumerate}
\item Each $S\in\mathcal{S}(h)$ is a solvable Lie group of Einstein type.  
\item There exist a Levi decomposition $G=G_1G_2$ and an Iwasawa decomposition $G_1=K_1A_1N_1$ such that, in the notation of~\ref{remf},
$$L=K_1(L\cap G_2).$$
In particular, $K_1$ has finite center and $L$ is a maximal compact subgroup of $G$.
\item There exists a characteristic subgroup $S_2$ of $G$ contained in $G_2$ such that:
\begin{enumerate}
\item $\mathcal{S}(h)=\{S_1\ltimes S_2: S_1\mbox{\,is an Iwasawa subgroup of \,}G\}.$
\item $G_2=(L\cap G_2)\ltimes S_2$.
\item $\nilrad(G)=\nilrad(S_2)$.
\end{enumerate}

\item Let $F\in\mathcal{F}$.  Then $F/(F\cap L)$ admits a left-invariant Einstein metric.   (In particular, the Einstein metric on $R$ and on the associated element $S$ of $\Scal(h)$ can be chosen to be invariant under $N_G(S)\in\mathcal{F}$.)

\end{enumerate}
\end{lemma}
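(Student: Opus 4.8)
The plan is to weave the structure theory of isometry groups of solvmanifolds (Proposition~\ref{prop.f} and~\ref{remf}, from \cite{GordonWilson:IsomGrpsOfRiemSolv}) together with the Einstein structure theory of Heber, Lauret and Nikolayevsky (Propositions~\ref{prop.heb}, \ref{prop.Lau}, \ref{prop.nik}, \ref{prop.nik2} and Corollary~\ref{cor.nik}) and with Proposition~\ref{prop.step1}. Since $\mathcal{S}(h)$ is a single conjugacy class in $G$ (Proposition~\ref{prop.f}(i)), I would fix one $S\in\mathcal{S}(h)$ and prove everything for it, the general statements following by conjugation. For this $S$, Proposition~\ref{prop.f}(iii) gives $F:=N_G(S)\in\mathcal{F}$, so by~\ref{remf}(i) there are a Levi decomposition $G=G_1G_2$ and an Iwasawa decomposition $G_1=K_1A_1N_1$ with $F=(M_1A_1N_1)G_cG_2$ as in~\eqref{eqf}, and by Proposition~\ref{prop.f}(iv) the Lie algebra satisfies
$$\af_1+\n_1+[\g,\g_2]\subset\s\subset Z(\m_1)+\af_1+\n_1+\g_2.$$
Set $S_1:=A_1N_1$ (an Iwasawa subgroup of $G$, with $\s_1=\af_1+\n_1$) and $S_2:=S\cap G_2$. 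Part (iii) is then essentially linear algebra: $G_1\cap G_2=0$ together with the sandwich gives $\s=\s_1\oplus\s_2$ as vector spaces, $\g_2=\operatorname{Rad}(\g)$ is an ideal so $\s_2$ is an ideal of $\s$, and $\s_1$ is a subalgebra, whence $\s=\s_1\ltimes\s_2$; for (iii)(a) one checks that $\hat S_1\ltimes S_2$ is simply transitive and in standard position for any Iwasawa subgroup $\hat S_1$, and conversely that any $S'\in\mathcal{S}(h)$, being conjugate to $S$, satisfies $S'\cap G_2=S_2$ after matching Iwasawa parts — using the intrinsic bounds $[\g,\g_2]\subset S'\cap\g_2\subset\g_2$, the fixed value $\dim(S'\cap\g_2)=\dim\mathcal{M}-\dim\s_1$, and the rigidity of standard position (Definition~\ref{def.stdpos}, Remark~\ref{rem.maxred}); this also makes $S_2$ intrinsic inside $\operatorname{Rad}(G)$, hence characteristic in $G$. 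For (iii)(b), $L\cap G_2$ is a maximal (toral) compact subgroup of the connected solvable group $G_2$ and $S_2$ is the complement singled out by standard position, so $G_2=(L\cap G_2)\ltimes S_2$; for (iii)(c), $\nilrad(\g)=\nilrad(\g_2)\subset\s_2$ (using $\nilrad(\g)\subset\operatorname{Rad}(\g)=\g_2$, $[\g,\g_2]\subset\s_2$, and that $L\cap G_2$ acts with purely imaginary eigenvalues) while $\s_2/\nilrad(\g)$ is abelian, giving $\nilrad(\s_2)=\nilrad(\g)$.

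The crux is part (i): one must show the abstract solvable group $S$ is of Einstein type knowing only that $R$ is. Since $\mathcal{S}(h)$ is a conjugacy class it is enough to produce one member of Einstein type, and since the double standard modification $R''$ of $R$ with respect to $h$ lies in $\mathcal{S}(h)$ by Proposition~\ref{prop.f}(ii), it suffices to show that a standard modification of a solvable group $R_0$ of Einstein type, with respect to any left-invariant metric, is again of Einstein type. The plan is this. The key observation is that a standard modification alters the adjoint action of the abelian complement only by adding skew-symmetric derivations of the nilradical, which are semisimple with purely imaginary eigenvalues and hence have vanishing real part; consequently it leaves the nilradical $\n$ of $R_0$ unchanged and leaves unchanged the family $\{(\ad A)^{\R}|_\n\}$ of real parts, as $A$ ranges over the abelian complement. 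Now since $R_0$ is of Einstein type, by Proposition~\ref{prop.Lau} its nilradical admits a nilsoliton metric (equivalently, the orbit in Proposition~\ref{prop.nik2} is closed), and by Proposition~\ref{prop.heb}(iii) and Proposition~\ref{prop.nik}(ii) the pre-Einstein derivation $\varphi$ of $\n$ equals $(\ad H)^{\R}|_\n$ for a suitable $H$ in a standard abelian complement. Both facts persist after the modification: the modified algebra has nilradical $\n$ and an abelian complement $\af'$ whose elements are semisimple with non-zero real part and with $\varphi\in\{Y^{\R}:Y\in\af'\}$, so Corollary~\ref{cor.nik} applied to $\af'\ltimes\n$ yields that it is of Einstein type. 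Applying this to $R_0=R$ and then to $R_0=R'$ gives $R''$, hence $S$, of Einstein type. The technical heart — and the main obstacle — is the verification of this key observation: that the modification of Definition~\ref{def.stdpos} is realized by adding skew derivations, that this preserves the nilradical and the standard-decomposition data, and that $\af'$ remains abelian with the stated spectral properties.

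Part (ii): that $L$ is compact is automatic ($L$ is the isotropy group of a left-invariant metric), and $\mathcal{M}=G/L$ is diffeomorphic to $\R^n$ because the simply connected solvable group $S$ acts simply transitively on it. For maximality, if $U\supset L$ is a maximal compact subgroup of $G$ then $U\cap S$ is a compact subgroup of the simply connected solvable group $S$, hence trivial; comparing dimensions via $G=SL$ and $S\cap L=\{e\}$ forces $U^0=L^0$, and a fixed-point argument for $U$ acting on $\mathcal{M}$ gives $U=L$, so $L$ is a maximal compact subgroup of $G$. Now invoke (i): $S$ is of Einstein type, so it carries a standard Einstein metric, and the structure theory of isometry groups of Einstein solvmanifolds (Proposition~\ref{prop.heb} together with~\ref{remf}) then shows that the semisimple Levi factor $G_1$ of $G$ can be chosen with finite center; hence $K_1$ is compact, which by~\ref{remf}(iii) is exactly the condition that the Lie algebra of a maximal compact connected subgroup of $G$ is maximal compactly embedded, and choosing the Levi and Iwasawa decompositions adapted to $L$ then yields $L=K_1(L\cap G_2)$. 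The Einstein-type hypothesis cannot be dropped here: Example~\ref{ex.sl} furnishes an $S$, not of Einstein type, whose $K_1$ has infinite center.

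Part (iv): all elements of $\mathcal{F}$ are conjugate, so take $F=N_G(S)$ for the fixed $S$. By (i), $S$ is of Einstein type, so Proposition~\ref{prop.step1} applied to $S$ with the left-invariant metric $h|_S$ (coming from identifying $S$ with $\mathcal{M}$ through the base point) produces an Einstein metric $g$ on $S$ with $\Aut_{\operatorname{orth}}(S,h|_S)\subset\Isom(S,g)$. Inside $G=I_0(\mathcal{M})$ the group $F=N_G(S)$ is precisely the semidirect product of the left-translation group of $S$ with $\Aut_{\operatorname{orth}}(S,h|_S)$; since $g$ is left-invariant and $\Aut_{\operatorname{orth}}(S,h|_S)$-invariant, it is $F$-invariant as a metric on $\mathcal{M}$. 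As $F\supset S$ acts transitively, $g$ is an $F$-invariant Einstein metric on $F/(F\cap L)=\mathcal{M}$, which also gives the parenthetical claim (and the same $g$, transported back, gives the Einstein metrics on $R$ and on $S$ invariant under $N_G(S)$). Together with Propositions~\ref{prop.f} and~\ref{prop.step1}, this completes the lemma.
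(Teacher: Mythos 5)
Your overall architecture matches the paper's (reduce to one $S\in\Scal(h)$, get $F=N_G(S)$ from Proposition~\ref{prop.f}, deduce (iv) from Proposition~\ref{prop.step1}), but the two places where the real work lives are left unfinished. The most serious is part (i). You reduce to showing that a standard modification of an Einstein-type group is again of Einstein type, and propose to do this by showing the modification preserves the nilradical and the family $\{(\ad A)^{\R}|_{\n}\}$ and then invoking Corollary~\ref{cor.nik} --- but you explicitly leave this ``key observation'' unverified, and it is not routine: the modification replaces $\ad(A)|_{\n}$ by $\ad(A)|_{\n}+D$ with $D$ skew-symmetric \emph{for the arbitrary metric $h$}, and $(\ad(A)+D)^{\R}=(\ad A)^{\R}$ only after one establishes the relevant commutation; one must also check that $\af'$ is abelian and that the nilradical is unchanged. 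The paper's remark following the proof confirms your route can be completed, but the paper itself sidesteps it: by Proposition~\ref{prop.step1} there is an Einstein metric $g$ on $R$ invariant under $\Aut_{\operatorname{orth}}(R,h)$; hence $H/\Aut_{\operatorname{orth}}(R,h)$, where $H=R\rtimes \Aut_{\operatorname{orth}}(R,h)$ has Lie algebra $N_\g(\rf)=\Der_{\operatorname{skew}}(\rf,h)+\rf$, carries an $H$-invariant Einstein metric on which the standard modification $R'$ acts simply transitively, so $R'$ is of Einstein type; one more modification lands in $\Scal(h)$. Either adopt that argument or actually prove your key observation.

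In part (ii), the sentence ``the structure theory of isometry groups of Einstein solvmanifolds then shows that the Levi factor can be chosen with finite center'' is precisely the assertion to be proved, not a citation. The paper's argument is short and you should supply it: if $\uf\supset\lf$ is a maximal compactly embedded subalgebra and $X\in\uf\cap\rf$, then $\ad(X)$ is semisimple with purely imaginary spectrum, which by Proposition~\ref{prop.heb}(ii) forces the $\af$-component of $X$ to vanish, whence $\ad(X)$ is both semisimple and nilpotent, hence zero, contradicting the triviality of the center of $\rf$; so $\lf=\uf$ is maximal compactly embedded and~\ref{remf}(iii) gives $L=K_1(L\cap G_2)$. (Your separate fixed-point argument for maximality of $L$ among compact subgroups is both unnecessary and delicate, since compact group actions on $\R^n$ need not have fixed points.) In part (iii), defining $S_2=S\cap G_2$ and calling it ``intrinsic, hence characteristic'' only yields invariance under inner automorphisms: a general $\tau\in\Aut(G)$ need not preserve the class $\Scal(h)$, which depends on $h$. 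The paper instead defines $\s_2$ as the $B_\g$-orthogonal complement of $\lf\cap\g_2$ in $\g_2$, so that characteristicness follows from conjugacy of maximal compact subgroups together with $\Ad(G)$-invariance of the ideal $\s_2$. Finally, in (iii)(c), ``$\s_2/\nilrad(\g)$ is abelian'' does not by itself give $\nilrad(\s_2)\subset\nilrad(\g)$; the correct point is that $\nilrad(\s_2)$ is a nilpotent \emph{ideal of $\g$} because $[\g,\g_2]\subset\nilrad(\g)\subset\nilrad(\s_2)$. Part (iv) agrees with the paper.
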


\begin{proof} (i) Write $\rf=\af+\n$ as in Definition~\ref{std}.  Let $\h=\Der_{\operatorname{skew}}(\rf,h) +\rf$ (semi-direct product) and let $H$ be the semi-direct product of $R$ with the group $\Aut_{\operatorname{orth}}(R,h)$ of orthogonal automorphisms of $(R,h)$.  Then $H$ has Lie algebra $\h$.  By Proposition~\ref{prop.step1}, there exists an Einstein metric $g$ on $R$ invariant under $\Aut_{\operatorname{orth}}(R,h)$.  Thus $H/\Aut_{\operatorname{orth}}(R,h)$ admits an $H$-invariant Einstein metric.  Since $R'$ acts simply-transitively on $H/\Aut_{\operatorname{orth}}(R,h)$, this Einstein metric defines a left-invariant Einstein metric on $R'$.   Thus $R'$ is of Einstein type.  Continuing, the standard modification $S$ of $R'$ is also of Einstein type and, by Proposition~\ref{prop.f}, $S\in \Scal(h)$. 

(ii) We have $\g=\lf+\rf$ with $\lf\cap\rf=\{0\}$.  Let $\uf$ be a maximal compactly embedded subalgebra of $\g$ containing $\lf$.   Suppose $X\in \uf\cap\rf$.   Then $\ad(X)$ is semisimple with purely imaginary eigenvalues.  By Propositions~\ref{prop.heb}(ii) and~\ref{prop.Lau}, it follows that all eigenvalues of $\ad(X)$ are zero and thus $X$ must be central.  However, by Propositions~\ref{prop.heb}(iii) and ~\ref{prop.Lau}, the center of $\rf$ is trivial.  Thus $\uf\cap\rf=\{0\}$, and $\lf$ is a maximal compactly embedded subalgebra of $\g$.  Statement (ii) now follows from~\ref{remf}(iii).

(iii)  Define $\s_2$ to be the orthogonal complement of $\lf\cap\g_2$ in $\g_2$ with respect to the Killing form $B_\g$,
and let $S_2$ be the corresponding connected subgroup of $G$.  Then $\nilrad(\g)<\s_2<\g_2$.  Since $[\g,\g_2]<\nilrad(\g)$, it follows that $\s_2$ is a $\g$-ideal.  We first show that it is a characteristic ideal; i.e., that it is invariant under $\Aut(\g)$.  By statement (ii) and the fact that $B_\g(\g_1,\g_2)=0$ for any semisimple Levi factor $\g_1$, we see that $\s_2=\lf^\perp\cap \g_2$ where $\lf^\perp$ is the orthogonal complement of $\lf$ with respect to $B_\g$.  Let $\tau\in\Aut(G)$.  Then $\tau(L)$ is a maximal compact subgroup of $G$ and hence is conjugate to $L$; i.e., $\tau_*(\lf)=\Ad(a)(\lf)$ for some $a\in G$.    For $X\in\s_2$, we have 
$$B_\g(\lf, \tau_*(X))=B_\g(\tau^{-1}_*(\lf),X)=B_\g(\Ad(a^{-1})(\lf),  X)=B_\g(\lf,\Ad(a)(X))=0$$
where the last equality uses the fact that $\s_2$ is a $\g$-ideal and thus is invariant under $\Ad(G)$.   Thus 
$\tau_*(\s_2)\perp \lf$ with respect to $B_\g$.  Since also $\tau_*(\s_2)<\g_2$, we have $\tau_*(\s_2)<\s_2$ and trivially equality must hold.   Thus $\s_2$ is a characteristic ideal in $\g$ and $S_2$ is a characteristic subgroup of $G$.

The fact that the Killing form of $\g$ is negative-definite on $\lf$ implies that $\g_2=(\lf\cap\g_2)\ltimes \s_2$.  Thus $S_2$ satisfies condition (b).

 For (c), since $\nilrad(\g)$ is a nilpotent ideal of $\s_2$, we have $\nilrad(\g)<\nilrad(\s_2)$.  For the opposite inclusion, note that any subspace of $\g_2$  containing $\nilrad(\g_2)$ is a $\g$-ideal since $[\g,\g_2]<\nilrad(\g)$.  In particular, $\nilrad(\s_2)$ is a nilpotent ideal of $\g$ and hence $\nilrad(\s_2)<\nilrad(\g)$.  Thus $S_2$ satisfies condition (c).

Finally we prove that condition (a) holds.  Consider the Levi and Iwasawa decompositions in part (ii) of the Lemma and the corresponding group $F\in \mathcal{F}$ given by $F=(M_1A_1N_1)G_cG_2$.  By (ii), $L\cap F=M_1G_c(L\cap G_2)$.   Let $S\in\Scal(h)$ be the subgroup of $F$ in standard position, i.e., $\s$ is the orthogonal complement of $\lf\cap \mathfrak{f}$ in $\mathfrak{f}$ relative to $B_\mathfrak{f}$.  For $X\in\lf\cap \mathfrak{f}$ and $Y\in\g_2$, we have 
$$B_\mathfrak{f}(X,Y)=\mytrace(\ad(X)\circ\ad(Y)_{|\g_2})=B_\g(X,Y).$$
It thus follows from the definition of $\s_2$ that $\s_2\perp(\lf\cap\mathfrak{f})$ relative to $B_\mathfrak{f}$ and hence $\s_2<\s$ by Definition~\ref{def.stdpos}.  By Proposition~\ref{prop.f}, we also  have that $\af_1+\n_1<\s$.  Write $\s_1=\af_1+\n_1$.   Since $\mathfrak{f}=(\lf\cap\mathfrak{f})+(\s_1+\s_2)$, we must have $\s=\s_1+\s_2$, and then $S=S_1\ltimes S_2$.   Thus we have found one element $S\in\Scal(h)$ of the form stated in condition (a).   Condition (a) now follows from Proposition~\ref{prop.f}(i), the fact that $S_2$ is normal in $G$ and the fact that the Iwasawa subgroups of $G$ form a $G$-conjugacy class of subgroups.

 (iv) is immediate from Proposition~\ref{prop.step1}.  

\end{proof}
\begin{remark} One can also show directly, using Proposition~\ref{prop.heb} and ~\ref{prop.Lau}, that the standard modification of $R$ with respect to $h$ is of Einstein type and moreover that it is given by $\rf'=\af'+\n$, where $\n$ is the nilradical of both $\rf$ and $\rf'$.   Moreover, by the proof of Theorem 3.5 of \cite{GordonWilson:IsomGrpsOfRiemSolv}, the fact that the standard modification $\rf'$ satisfies $\nilrad(\rf')=\nilrad(\rf)$ implies that $R'$ is in standard position with respect to $h$.   Thus for solvable Lie groups of Einstein type, only a single standard modification is needed to reach standard position.   We will not need this fact, however.
\end{remark}  

Lemma~\ref{lem.stdmodEinst} says that each $S\in\Scal(h)$ satisfies the hypotheses of the Key Lemma stated below.

\begin{key}[``Only if'' statement of Lemma~\ref{lem}.] \label{key}Suppose that $S$ is a solvable Lie group of Einstein type and that $S$ is a semi-direct product $S=S_1\ltimes S_2$ of subgroups satisfying the following hypotheses:
\begin{itemize} 
\item $S_1$ isomorphically embeds as an Iwasawa subgroup in a semisimple Lie group $G_1$.   
\item The adjoint action of $S_1$ on the Lie algebra $\operatorname{Lie}(S_2) $ extends to a representation of $G_1$ on $\operatorname{Lie}(S_2)$.
\end{itemize}
Then, $S_2$ is of Einstein type.\end{key}

In the remainder of this section, we assume the Key Lemma and complete the proof of the Main Theorem~\ref{mainthm}.  We will then prove the Key Lemma in a later section.

Assuming the Key Lemma , we have reduced the proof of the Main Theorem to the following proposition:

\begin{prop}\label{prop.key} Let $\Gt$ be a (not necessarily connected) Lie group, let $\Lt$ be a compact subgroup of $\Gt$ and denote by $G$ and $L$ the identity components of $\Gt$ and $\Lt$, respectively.    Suppose that there exists a Levi decomposition $G=G_1G_2$, an Iwasawa decomposition $G_1=K_1S_1$, and a connected solvable normal subgroup $S_2$ of $\Gt$ satisfying the following:
\begin{enumerate}
\item $L=K_1(L\cap G_2)$ and $G_2=(L\cap G_2)S_2$;
\item The solvable group $S:=S_1\ltimes S_2$ acts simply transitively on $\Gt/\Lt$;
\item $S_2$ is of Einstein type.
\end{enumerate}
Then $\Gt/\Lt$ admits a left-invariant Einstein metric of negative Ricci curvature.
\end{prop}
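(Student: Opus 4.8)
The plan is to produce the metric by feeding a well-chosen Einstein metric on $S_2$ into the ``if'' direction of Lemma~\ref{lem} and then checking that the resulting Einstein metric on $S=S_1\ltimes S_2$ descends to a $\Gt$-invariant metric on $\Gt/\Lt$. Set $N_2=\nilrad(S_2)$. By hypothesis~(iii) and Proposition~\ref{prop.Lau}, $N_2$ is an Einstein nilradical, so it admits a nilsoliton metric, unique up to automorphism and scaling. Since $S_2$ is normal in $\Gt$, so is its characteristic subgroup $N_2$, and conjugation gives homomorphisms $\Gt\to\Aut(S_2)$ and $\Gt\to\Aut(N_2)$, namely $a\mapsto\Ad(a)|_{\s_2}$ and $a\mapsto\Ad(a)|_{\n_2}$ for $a\in\Gt$. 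By the given Iwasawa decomposition, $S_1$ is an Iwasawa subgroup of the semisimple group $G_1$, and since $S_2\triangleleft\Gt$ the representation $\Ad(S_1)|_{\s_2}$ is the restriction of $\Ad(G_1)|_{\s_2}$; hence the hypotheses of Lemma~\ref{lem} are met, and its ``if'' direction, applied to a suitable Einstein metric $g_2$ on $S_2$, yields a left-invariant Einstein metric $g$ of negative Ricci curvature on $S$ with $g|_{S_2}=g_2$, with $g|_{S_1}$ symmetric (so that $(S_1,g|_{S_1})\cong G_1/K_1$), and with $\s_1\perp\s_2$. Finally, by hypothesis~(ii) we have $\Gt=S\Lt$, so it will suffice to arrange that the inner product $g$ on $\s\cong\g/\lf$ is invariant under the isotropy action of $\Lt$; this then gives the desired $\Gt$-invariant Einstein metric.

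The core of the proof is to choose the nilsoliton on $N_2$ --- hence $g_2$, hence $g$ --- so that $\Ad(\Lt)|_{\n_2}$ acts by orthogonal maps and $\Ad(G_1)|_{\n_2}$ acts by self-adjoint maps; the second condition is what lets the elementary ``if'' direction of Lemma~\ref{lem} turn $g_2$ into an Einstein metric on $S$ of the stated form, and the first is what makes $g$ invariant under $\Lt$. For the first condition: $\Lt$ is compact, so $\Ad(\Lt)|_{\s_2}$ preserves some left-invariant metric on $S_2$; by Proposition~\ref{prop.step1} applied to $S_2$ --- equivalently, because nilsolitons are maximally symmetric, Proposition~\ref{completely_solv} --- there is an Einstein metric $g_2$ on $S_2$ invariant under $\Ad(\Lt)|_{\s_2}$, whose restriction $g^N:=g_2|_{\n_2}$ is a nilsoliton and for which $\Ad(\Lt)$ preserves $\n_2$ and its $g_2$-orthogonal complement $\af_2$, acting orthogonally on each. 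For the second condition, to be imposed simultaneously: $K_1\le\Lt$ already makes $\Ad(K_1)|_{\n_2}$ orthogonal for $g^N$, and $\Ad(G_1)|_{\n_2}$ is the image of the semisimple group $G_1$, with maximal compact subgroup $\Ad(K_1)|_{\n_2}$; since every metric in the $\Aut(N_2)$-orbit of $g^N$ is again a nilsoliton, I would use the conjugacy and extension theorems for Cartan decompositions (cf.~\ref{cd}) to adjust $g^N$ further within that orbit, without destroying the $\Ad(\Lt)$-invariance --- using crucially that $\Ad(\Lt)$ and $\Ad(G_1)$ share the compact subgroup $\Ad(K_1)$ --- so that $\Ad(G_1)|_{\n_2}$, and hence $\Ad(G_1)|_{\s_2}$, becomes self-adjoint.

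It then remains to verify that $g$ is invariant under the $\Lt$-isotropy action on $\s=\s_1\oplus\s_2$. Because $[\g_1,\g_2]\subseteq\nilrad(\g)$, the Levi factor $G_1$ acts reductively on $\g_2$ and trivially on $\g_2/\nilrad(\g)$; consequently (after replacing the given data by a conjugate under $\nilrad(G)$ if necessary, which does not affect $\Gt$-invariance) one may assume that $\g_1$ centralizes $\kf_2:=\lf\cap\g_2$, so that $L\cap G_2$ commutes with $G_1\supseteq S_1$ and, normalizing $S_2$ as well, normalizes $S$. Decomposing $\Lt$ via $L=K_1(L\cap G_2)$ together with its finitely many components: the $K_1$-part acts by isometries of the symmetric factor $(S_1,g|_{S_1})\cong G_1/K_1$ and by the $g_2$-orthogonal maps $\Ad(K_1)|_{\s_2}$ on $\s_2$, with the summands $\s_1,\s_2$ not mixed; the $(L\cap G_2)$-part acts on $\s$ by $\Ad(\cdot)|_\s$, fixing $\s_1$ pointwise and acting orthogonally on $\s_2$. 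Hence the $\Lt$-isotropy preserves $g=g|_{\s_1}\perp g_2$, so $g$ defines a $\Gt$-invariant Einstein metric of negative Ricci curvature on $\Gt/\Lt$, as required. The step I expect to be the main obstacle is the simultaneous realization of the two conditions on the nilsoliton in the second paragraph --- reconciling the $\Ad(\Lt)$-invariance with the $\Ad(G_1)$-self-adjointness.
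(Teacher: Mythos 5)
There is a genuine circularity at the core of your proposal: you produce the metric on $\Gt/\Lt$ by invoking the ``if'' direction of Lemma~\ref{lem}, but Proposition~\ref{prop.key} \emph{is} the ``if'' direction of Lemma~\ref{lem} --- the paper states explicitly that this proposition together with the Key Lemma~\ref{key} constitute Lemma~\ref{lem}, and the ``if'' direction is proved nowhere else. So the step ``its `if' direction \dots yields a left-invariant Einstein metric $g$ on $S$'' assumes exactly what is to be shown. The missing content is the actual construction: one takes the reductive complement $\q=\p_1+\af_2+\n_2$ of $\lf$, puts $g_2$ on $\s_2$, puts $\alpha_i B_{\h_i}$ on the piece $\p_{1,i}$ of each simple noncompact factor $\h_i$ of $\g_1$, and must then \emph{choose} the constants $\alpha_i$. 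This is not a formality: by the Lauret--Lafuente computation, the Ricci tensor of $G/L$ restricted to $\p_1$ differs from that of the symmetric space $G_1/K_1$ by the $\Ad(G_1)$-invariant form $T(X,Y)=\mytrace(\rho(X)\rho(Y))$ (writing $T|_{\h_i\times\h_i}=\beta_i B_{\h_i}$), which records how $\h_i$ acts on $\n_2$; one is forced to take $\alpha_i=(-1-\beta_i)/c$, where $c<0$ is the Einstein constant of $g_2$, to make the Einstein constants on $\p_1$ and $\s_2$ agree. In particular the naive orthogonal product of ``the'' symmetric metric on $G_1/K_1$ with $g_2$ is not Einstein in general, and your parenthetical ``so that $(S_1,g|_{S_1})\cong G_1/K_1$'' hides precisely the choice that carries the proof.

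The remainder of your outline does track the paper's argument: the Einstein metric $g_2$ on $S_2$ is chosen, via Proposition~\ref{prop.step1} together with Proposition~\ref{prop.full_aut} and Mostow's compatibility and conjugacy of Cartan decompositions applied to $G_1\Lt$ inside the automorphism group of $\s_2$, so that $\Lt$ acts orthogonally and the symmetric part $P_1$ of $G_1$ acts by self-adjoint maps --- this is exactly how the step you flag as ``the main obstacle'' is resolved, the shared maximal compact $K_1=\Lt\cap G_1$ being the point. The final $\Ad_{\Gt}(\Lt)$-invariance check also requires one more observation you omit: a component of $\Lt$ may permute the simple factors $\h_i$, so one must verify that $\beta_{\sigma(i)}=\beta_i$ (hence $\alpha_{\sigma(i)}=\alpha_i$) under such a permutation, which follows because $T$ is defined intrinsically from $\rho$. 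As written, however, the proposal does not prove the proposition; it reduces it to itself.
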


(There is some redundancy in the hypotheses of the proposition; one can show that hypothesis (i) follows from the remaining hypotheses.)  Note that this proposition together with the Key Lemma~\ref{key}  form Lemma~\ref{lem} stated in the introduction.  

 The Main Theorem~\ref{mainthm} is an immediate consequence of Lemma~\ref{lem.stdmodEinst}, Lemma~\ref{key} and Proposition~\ref{prop.key}.  The statement of Proposition~\ref{prop.key} is actually stronger than needed to prove the Main Theorem, since it does not assume that $S$ itself is of Einstein type; instead the conclusion of the proposition implies that $S$ is of Einstein type.

\begin{prop}\label{prop.full_aut} Let $S$ be a solvable Lie group of Einstein type and let $W$ be a maximal compact (not necessarily connected) subgroup of $\Aut(\s)$.  Then:
\begin{enumerate}
\item There exists a left-invariant Einstein metric $g$ on $S$ that is $W$-invariant.    
\item Let $H$ be the connected reductive subgroup of $\Aut(\s)$ with Lie algebra $\Der(s)^\af$, where $\s=\af+\n$ is the orthogonal decomposition of $\s$ given in Proposition~\ref{prop.heb} with respect to the Einstein metric $g$ in (i).  Then $H$ is normalized by $W$.   The Lie subgroup $\Ht:=W H$ of $\Aut(S)$ has identity component $H$ and has only finitely many connected components. 
\item In the language of~\ref{cd}, $\Ht$ has a Cartan decomposition $\Ht=WP$, with $W$ acting orthogonally and $P$ acting symmetrically on $\s$ with respect to the inner product $g$.
\end{enumerate} 

\end{prop}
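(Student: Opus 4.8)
The plan is to deduce (i) directly from Proposition~\ref{prop.step1}, to obtain (ii) by transporting Heber's orthogonal decomposition by $W$ and then arguing at the Lie algebra level, and to obtain (iii) from Mostow's theory of self-adjoint groups recalled in~\ref{cd}. For (i): averaging an arbitrary inner product on $\s$ over the compact group $W$ yields a $W$-invariant inner product; let $h$ be the associated left-invariant metric on $S$. Since $W$ acts by automorphisms fixing $e$ and preserving $h_e$, we have $W\subset\Aut(S)\cap\Isom(S,h)$. Proposition~\ref{prop.step1} then gives an Einstein metric $g$ on $S$ with $\Isom(S,h)\cap\Aut(S)\subset\Isom(S,g)$, and since $W\subset\Aut(S)$ this gives $W\subset\Aut(S)\cap\Isom(S,g)$; that is, $g$ is $W$-invariant.

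For (ii): fix $g$ as in (i) and write $\s=\af+\n$ as in Proposition~\ref{prop.heb}, the decomposition being orthogonal for $g$, so that $\af=\n^\perp$. The nilradical $\n$ is a characteristic ideal, hence preserved by $W$; since $W$ acts orthogonally for $g$, it also preserves $\af=\n^\perp$. Hence, for $w\in W$ and $A\in\af$, $w\,\ad_\s(A)\,w^{-1}=\ad_\s(wA)\in\ad_\s(\af)$, so conjugation by $w$ preserves $\ad_\s(\af)$ and therefore its centralizer in $\Der(\s)$, which by~\ref{nota.ss} is exactly $\Der(\s)^\af=\operatorname{Lie}(H)$. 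Thus $W$ normalizes $H$ and $\Ht=WH$ is a subgroup of $\Aut(S)$. To locate its identity component, take $D\in\operatorname{Lie}(W)$: it is a derivation of $\s$ that is skew-symmetric for $g$, so by Proposition~\ref{prop.heb}(i) we may write $D=\ad_\s(X)+\beta$ with $X\in\n$ and $\beta\in\Der(\s)^\af$. For $A\in\af$ this gives $D(A)=[X,A]\in\n$, while skew-symmetry of $D$ together with $D(\n)\subset\n$ forces $D(A)\in\af$; hence $D(A)=0$, so $D\in\Der(\s)^\af=\operatorname{Lie}(H)$. Thus $\operatorname{Lie}(W)\subset\operatorname{Lie}(H)$, so the identity component $W_0$ lies in $H$; consequently $W\cap H\supset W_0$, the quotient $\Ht/H\cong W/(W\cap H)$ is finite, and $H$, being connected of finite index in $\Ht$, is the identity component of $\Ht$, which therefore has finitely many components.

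For (iii): $\Ht$ is an fcc group by (ii), and by Proposition~\ref{prop.heb}(i) its Lie algebra $\Der(\s)^\af=\kf+\p$ splits into the $g$-skew part $\kf$ and the $g$-symmetric part $\p$, so $\operatorname{Lie}(\Ht)$ is stable under transposition with respect to $g$. Moreover the components of $\Ht$ are represented by elements of $W$, which act orthogonally for $g$ (so these representatives $w$ satisfy $w^{t}=w^{-1}\in\Ht$); together with the $t$-stability of the identity component's Lie algebra this shows $\Ht$ is self-adjoint with respect to $g$. By Mostow's results on self-adjoint groups (\cite{Mostow:SelfAdjointGroups}; cf.~\ref{cd}), $\Ht$ then has a Cartan decomposition whose maximal compact factor is $\Ht\cap O(\s,g)$ and whose symmetric factor is $\exp(\p)$, where $\p=\Der(\s)^\af\cap\{\text{$g$-symmetric endomorphisms of }\s\}$ and $\exp(\p)$ acts by positive-definite $g$-symmetric transformations. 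Since $W\subset\Ht\cap O(\s,g)$ is compact and $W$ is maximal compact in $\Aut(\s)$, hence maximal compact in $\Ht$, we get $\Ht\cap O(\s,g)=W$, and so $\Ht=W\exp(\p)$ is the desired Cartan decomposition, with $W$ acting orthogonally and $P=\exp(\p)$ symmetrically with respect to $g$.

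I expect the main obstacle to be the inclusion $\operatorname{Lie}(W)\subset\Der(\s)^\af$ in (ii)---the step combining Heber's splitting $\Der(\s)=\ad_\s(\n)+\Der(\s)^\af$ with the facts that $\n$ is characteristic and that $W$ preserves $\af=\n^\perp$---together with the routine but necessary verification in (iii) that $\Ht$ is self-adjoint for $g$, which is exactly what makes Mostow's Cartan decomposition available and pins down the symmetric factor as $\exp(\p)$.
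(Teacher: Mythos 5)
Your proof is correct and follows essentially the same route as the paper: (i) by averaging over $W$ and invoking Proposition~\ref{prop.step1}, (ii) by showing $W$ preserves $\af=\n^\perp$ and that skew-symmetric derivations preserving $\n$ must vanish on $\af$ (hence $\operatorname{Lie}(W)\subset\Der(\s)^\af$), and (iii) by extending the Cartan decomposition $\h=\kf+\p$ of Proposition~\ref{prop.heb}(i) via Mostow's self-adjoint group theory. Your use of self-adjointness of $\Ht$ together with maximality of $W$ to identify the compact factor as exactly $W$ is a slightly more careful rendering of a step the paper leaves implicit, but it is not a different argument.
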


\begin{proof} The first statement is immediate from Proposition~\ref{prop.step1}.   For the second statement, note that $H$ is the identity component of $\Aut(\s)^\af:=\{\tau\in \Aut(S): \tau|_{\af}=Id_\af\}$.   The group $W$ leaves $\af$ invariant since it acts orthogonally relative to $g$ and thus normalizes $H$.   The Lie algebra of $\Ht$ contains  
 $\Der(\s)^\af$ and consists of derivations that leave $\af$ invariant.  By Proposition~\ref{prop.heb}, it follows that $\Ht$ has Lie algebra $\h:=\Der(\s)^\af$.  This, together with the fact that $W$ is compact, yields (ii).

Proposition~\ref{prop.heb}(i) gives us a Cartan decomposition $H=WP$, where $P=\exp(\p)$, with $\p$ consisting of all elements of $\h=\Der(\s)^\af$ that are symmetric with respect to $g$.   Since $\hat{W}$ acts orthogonally with respect to $g$, this Cartan decomposition extends to a Cartan decomposition $\Ht=\hat{W} P$.

\end{proof}

\begin{ab}[{\bf Choosing an Einstein metric on $S_2$}]\label{lem.com}  In the notation of Proposition~\ref{prop.key}, the Lie group $\Gt$ acts by conjugation on $S_2$.  Let $\rho:\Gt\to \Aut(\s_2)$ be the resulting action on $\s_2$ so $\rho_*(X)=\ad(X)_{|\s_2}$.  By Proposition~\ref{prop.step1}, there exists a left-invariant Einstein metric $h_2$ on $S_2$ invariant under the action of $\Lt$.   We have a standard decomposition $\s_2=\af_2+\n_2$ as in Definition~\ref{std}, with  $\af_2\perp \n_2$ relative to $h_2$.  The group $\Lt$ normalizes $\af_2$ and $L$ acts trivially on $\af_2$.  By Proposition~\ref{prop.heb},  we have that $\g=\n_2 + Z_\g(\af_2)$, and $\rho_*(Z_\g(\af_2))$ lies in the reductive Lie algebra $\Der(\s_2)^\af$.    In fact $Z_\g(\af_2)$ is  itself a reductive Lie algebra complementary to $\n_2$.   Indeed, by Proposition~\ref{prop.heb}, we have  $Z_\g(\af_2)\cap\n_2)=\{0\}$ and thus the projection $\g\to \g/\n_2$ restricts to an isomorphism of $Z_\g(\af_2)$ with $\g/\n_2$.   But it is easily seen that $\n_2=\nilrad(\g)$.  Thus $\g/\n_2$, and hence $Z_\g(\af_2)$, is reductive.    The derived algebra of $Z_\g(\af_2)$ is a semisimple Levi factor of $\g$, we replace $\g_1$ by $[Z_\g(\af_2),Z_\g(\af_2)]$.    This may result in replacing $L$ (thus $K_1$) and $S_1$ by conjugates, but that does not affect the validity of Proposition~\ref{prop.key}.  

The Lie group $G_1\Lt$ is a reductive fcc group (see~\ref{cd}) with identity component $G_1(L\cap G_2)$.     Let $G_1=K_1P_1$ be a Cartan decomposition.   Noting that $K_1=L\cap G_1$, we see that $\Lt P_1$ is a Cartan decomposition of $G_1\Lt$.   Let $S_2$ play the role of $S$ in Proposition~\ref{prop.full_aut}.   Then the Lie group $\Ht$ in~\ref{prop.full_aut} contains $G_1\Lt$ and thus admits a Cartan decomposition $\Ht=WQ$ compatible with the Cartan decomposition $\Lt P$.   By Proposition~\ref{prop.full_aut} and the uniqueness of Cartan decompositions up to conjugacy, there exists $\tau\in\Ht$ such that $W$, hence $\Lt$, acts orthogonally and $Q$, hence $P$, acts symmetrically on $\s_2$ relative to the Einstein inner product $\tau^*h_2$.   We set $g_2=\tau^* h_2$.  

\end{ab}

\begin{ab}[{\bf Computing Ricci Curvature}]

We review the expression for the Ricci curvature on a Riemannian homogeneous space $G/K$.  Let $\g=\kf+\q$ be a reductive decomposition (I.e., $\q$ is an $\Ad_G(K)$-invariant complement of $\kf$), and let $\Ip$ be the Riemannian inner product on $\q$.  We may extend $\Ip$ to an $\Ad_G(K)$-invariant inner product on $\g$ with $\kf\perp\q$.  For $X\in\g$, write $X=X_\kf +X_\q$ with $X_\kf\in\kf$ and$X_\q\in\q$.  Let $\Rc:\q\times \q\to \R$ denote the Ricci tensor of $\Ip$, and let $\Ric:\q\to\q$ denote the corresponding Ricci operator; i.e., $\langle \Ric(X),Y\rangle =\Rc(X,Y)$ for $X,Y\in\q$.  Let $H\in\q$ be the unique element such that $\langle H,X\rangle =\mytrace(\ad(X))$ for all $X\in\g$.   Let $B_\g$ denote the 
Killing form of $\g$.  Define  $M:\q\to\q$ by
\begin{equation}\label{m}\langle M(X),Y\rangle=\sum_{i=1}^n\,-\frac{1}{2}\langle [X,X_i]_\q, [Y,X_i]_\q\rangle +\frac{1}{4}\sum_{i,j=1}^n\,\langle [X_i,X_j]_\q,X\rangle \langle [X_i,X_j]_\q,Y\rangle\end{equation}
where $\{X_1,\dots, X_n\}$ is an orthonormal basis of $\q$.  Then
\begin{equation}\label{ric}\Rc(X,Y) =\langle   M(X),Y\rangle -\frac{1}{2}B_\g(X,Y)-\frac{1}{2}\langle [H,X],Y\rangle -\frac{1}{2}\langle X, [H,Y]\rangle.\end{equation}

\end{ab}

\begin{proof}[Proof of Proposition~\ref{prop.key}]

Let  $\uf=Z(\af_2)=\lf +(\p_1+\af_2)$ and $\q=\p_1+\af_2+\n_2$.  Then $\uf$ is a reductive Lie algebra and $\g=\uf+\n_2=\lf+\q$ with each term in the two decompositions being $\Ad_{\Gt}(\Lt)$-invariant.  We define an inner product $\Ip$ on $\q$ satisfying:
\begin{enumerate}
\item $\p_1\perp\s_2$;
\item The restriction of $\Ip$ to $\s_2\times \s_2$ is the Einstein inner product defined in~\ref{lem.com};
\item Writing the noncompact part $\g_{nc}$ of $\g_1$ as a direct sum $\g_{nc}=\h_1\oplus\dots\h_r$ of simple ideals and letting $\p_{1,i}=\p_1\cap \h_i$, then $\p_{1,i}\perp\p_{1,j}$ for $i\neq j$;
\item The restriction of $\Ip$ to $\p_{1,i}\times\p_{1,i}$ is a positive multiple $\alpha_iB_{\h_i}$ of the Killing form of $\h_i$.
\end{enumerate}

Any such inner product is automatically invariant under $\Ad_G(L)$, where $L=K_1(L\cap G_2)$ is the identity component of $\Lt$.  Our goal is to choose the constants $\alpha_i$  
in such a way that $\Ip$ is $\Ad_{\Gt}(\Lt)$-invariant and so that the resulting Riemannian metric on $\Gt/\Lt$ is Einstein.

Let $\Rc:\q\times \q\to \R$ denote the Ricci tensor of $\Ip$, let $\Rc_1$ and $\Rc_2$ denote, respectively, the Ricci tensors of $G_1/K_1$ and $S_2$ with respect to the Riemannian metrics defined by the restrictions of $\Ip$ to $\p_1\times\p_1$ and to $\s_2\times\s_2$, and let $\Ric_1$ and $\Ric_2$ denote the corresponding Ricci operators.  Since $S_2$ is Einstein, we have $\Ric_2=c\,Id_{\s_2}$ for some negative constant $c$.  The metric on $G_1/K_1$ is symmetric and we have
\begin{equation}\label{rc1}\Ric_1=-\left(\frac{1}{\alpha_1}\Id_{\p_{1,1}}\times\dots\times \frac{1}{\alpha_r}\Id_{\p_{1,r}}\right).\end{equation}

We now compare the Ricci tensors $\Rc_1$ and $\Rc_2$ to the restictions of $\Rc$ to $\p_1$ and $\s_2$, respectively.   For $i=1,2$, we will write $H_i$ and $M_i$ for the expressions in Equation~\ref{ric} for $\Rc_i$.  Since $G_{1}$ is semisimple, and $\g_1\perp\s_2$, we have
\begin{equation}\label{h} H_1=0 \mbox{ and } H=H_2\in\s_2.\end{equation}

To compare $M$ and $M_i$, we use a computation carried out by Lauret and Lafuente in\cite{LauretLafuente:StructureOfHomogeneousRicciSolitonsAndTheAlekseevskiiConjecture}, Lemma 4.4.   They considered the case of a reductive decomposition $\g=\lf+\q$ with $\q=\h+\n$ where $\uf=\lf+\h$ is a reductive Lie algebra and $\n=\nilrad(\g)$.   In our notation, $\h=\p_1+\af_2$ and $\n=\n_2$.  For $X\in\g$, write 
$$\rho(X)=\ad(X)|_{\n_2}.$$   Using Lauret--Lafuente's computation, along with the fact that $\ad(X)|_{\n_2}$ is symmetric with respect to $\Ip$ for all $X\in\p_1$ (see~\ref{lem.com}), we find that 
\begin{equation}\label{m2}M_2=M|_{\s_2\times\s_2},\end{equation}  
\begin{equation}\label{m1}\langle M_1(X),Y\rangle=\langle M(X),Y\rangle+\frac{1}{2}\mytrace(\rho(X)\rho(Y)) \mbox{ for } X,Y\in\p_1\end{equation}
and 
\begin{equation}\label{m12}\langle M(X),Y\rangle=0 \mbox{ for } X\in\p_1 \mbox{ and } Y\in\s_2.\end{equation}
 (The third equation uses the fact that $\mytrace(\rho(X)\rho(Y))=0$ for $X\in\p_1$ and $Y\in\af_2$.)

The Killing forms satisfy 
\begin{equation}\label{kf}B_ \g|_{\s_2\times\s_2}=B_{\s_2} \mbox{ and } B_\g(X,Y)=B_{\g_{1}}(X,Y)+\mytrace(\rho(X)\rho(Y)) \mbox{ for } X,Y\in\p_1.\end{equation}
Equations~\ref{ric}--\ref{kf} yield
\begin{equation}\label{rc2}Rc_2=Rc|_{\s_2\times\s_2}\end{equation}
and
\begin{equation}\label{L}\Rc_1(X,Y)=\Rc(X,Y)+T(X,Y),\end{equation}
where 
\begin{equation}\label{rho}T(X,Y)=\mytrace(\rho(X)\rho(Y)) \mbox{ for } X,Y\in\g_{nc}.\end{equation}
Since $T$ is an $\Ad(G_{1})$ invariant bilinear form on $\g_{1}$, we have $T(\h_i,\h_j)=0$ for $i\neq j$ and, for each $i$, there exists a constant $\beta_i$ such that 
\begin{equation}\label{beta}T|_{\h_i\times\h_i}=\beta_iB_{\h_i}=\frac{\beta_i}{\alpha_i}\Ip|_{\p_{1,i}\times\p_{1,i}}.\end{equation}
Since $\rho|_{\p_1}$ is symmetric, we have $\beta_i\geq 0$ with equality if and only $[\h_i,\g_2]=0$.

We now define the constants $\alpha_i$, $i=1,\dots r$, in condition (iv) by
\begin{equation}\label{alpha}\alpha_i=\frac{-1-\beta_i}{c}\end{equation}
and observe that $\alpha_i>0$ since $c<0$ and $\beta_i\geq 0$.   By Equations~\ref{rc1}, \ref{rc2}, \ref{L}, \ref{beta} and \ref{alpha}, we have $\Rc=c\Ip$ on all of $\q$.   Thus we have constructed a left-invariant Einstein metric on $G/L$. 

It remains only to show that the inner product $\Ip$ on $\q$ is invariant under $\Ad_{\Gt}(\Lt)|_\q$.  Condition (ii) guarantees that the restriction of $\Ip$ to $\s_2\times \s_2$ is $\Ad_{\Gt}(\Lt)$-invariant, so it remains only to check the restriction to $\p_1$.  By~\ref{lem.com}, 
the Cartan decomposition $\g_1=\kf_1+\p_1$ is $\Ad_{\Gt}(\Lt)$-invariant.  Thus, for each $\gamma\in \Ad_{\Gt}(\Lt)$, there exists a permutation $\sigma$ of $\{1,2,\dots,r\}$ such that $\gamma(\h_{i})=\h_{\sigma(i)}$ and $\gamma(\p_{1,i})=\p_{1,\sigma(i)}$ for all $i$.   Since the automorphism $\gamma$ preserves both the bilinear form $T$ in Equation~\ref{rho} and intertwines the Killing form of $\h_i$ with that of $\h_{\sigma(i)}$, Equation~\ref{beta} shows that $\beta_{\sigma(i)}=\beta_i$, and then Equation~\ref{alpha} implies $\alpha_{\sigma(i)}=\alpha_i$ for each $i$.  Thus by Condition (iv), $\Ip$ is $\gamma$-invariant.
\end{proof}

\section{Technical lemmas on GIT} \label{sec: technical lemmas on GIT}

In this section, the groups of primary interest are fully reducible subgroups of $GL(V)$, where $V$ is a real or complex vector space.  For a point $p\in V$, we are interested understanding when the orbit $G\cdot p$ is closed in $V$.

\subsection{Preliminaries on fully reducible groups}  We first recall the definition of fully reducible.

\begin{defin}  A subgroup $G \subset GL(V)$ is called fully reducible if for any $G$-invariant subspace $W$ of $V$, there exists a complementary $G$-invariant subspace $W'$ of $V$.
\end{defin}

  Observe that if a subspace $W$ is invariant under $G$, then it is also invariant under the Zariski closure $\overline G$ of $G$.  In this way we see that $G$ being fully reducible implies $\overline G$ is also fully reducible.  Further, if $G$ is connected and fully reducible, then $G$ may be written as $G=[G,G] Z(G)$, where $[G,G]$ is semisimple, $Z(G)$ is the center of $G$, and the elements of $Z(G)$ are semisimple (i.e.\ diagonalizable).  This fact is well-known for algebraic groups and the proof is more or less the same in the case of connected Lie groups.


Let $H$ be an algebraic, fully reducible subgroup of an algebraic, fully reducible subgroup $G$ of $GL(V)$.  It is well-known that the normalizer $N_G(H)$ and centralizer $Z_G(H)$ of $H$ in $G$ are also fully reducible and algebraic.  
Further,  $Z_G(H) \cdot H $  is a finite index subgroup of  $N_G(H)$.  At the Lie algebra level, this is precisely
	$$N_\mathfrak g (\mathfrak h) = Z_\mathfrak g(\mathfrak h) + \mathfrak h.$$

\begin{defin}  Let $k=\mathbb R$ or $\mathbb C$ and consider the multiplicative group $k^*$ of all non-zero elements.
A \textit{1-parameter subgroup}  of $G$ is a homomorphism $\lambda : k^* \to G$, where we consider $k=\mathbb R$ for real Lie groups $G$ and  $k=\mathbb C$ for complex Lie groups $G$.  We say $\lambda$ is an algebraic 1-parameter subgroup if the map $\lambda$ is regular (i.e. a map between algebraic groups). 

\end{defin}

\begin{remark}  Let $\lambda$ be a 1-parameter subgroup of $G$.  The image $\lambda(\mathbb R)$ is a subgroup of $G$ and we will often abuse notation by denoting this subgroup simply by $\lambda$.  

\end{remark}

Our definition  of 1-parameter subgroup is somewhat restrictive as it  does not include one-parameter subgroups of nilpotent Lie groups, however this is a standard definition when studying reductive algebraic groups.  Note that  algebraic 1-parameter subgroups are fully reducible.  (This can be worked out by hand, but it is also a special case of a general result of Mostow on regular representations of algebraic, reductive groups \cite{Mostow:FullyReducibleSubgrpsOfAlgGrps}.)

\subsection{Geometric Invariant Theory}

\begin{thm}\label{thm: HMC - classical}[Hilbert-Mumford criterion]  Let $G$ be an algebraic, fully reducible subgroup of $GL(V)$.  
Suppose the stabilizer subgroup $G_p$ is finite.  Then $G\cdot p$ is closed if and only if $\lambda \cdot p$ is closed for all algebraic 1-parameter subgroups $\lambda$ of $G$.
\end{thm}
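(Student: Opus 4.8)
The plan is to prove the two implications separately. The forward implication ($G\cdot p$ closed $\Rightarrow$ every $\lambda\cdot p$ closed) is a short self-contained argument whose whole point is the finiteness of $G_p$; the reverse implication is a reduction to the classical Hilbert--Mumford theorem (Mumford's theorem over $\mathbb{C}$, and its real analogue due to Birkes and Richardson--Slodowy over $\mathbb{R}$).

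For the forward direction I would argue by contraposition. Suppose some algebraic $1$-parameter subgroup $\lambda$ has $\lambda\cdot p$ not closed. Decomposing $V$ into weight spaces for $\lambda$ (write $p=\sum_i p_i$ with $\lambda(t)p_i=t^{m_i}p_i$ over $\mathbb{C}$, and the analogous $|t|^{m_i}$-version together with a separate $\{\pm1\}$ factor over $\mathbb{R}$), one sees that $\lambda\cdot p$ fails to be closed precisely when all weights $m_i$ occurring in $p$ have the same sign and are not all zero; replacing $\lambda$ by $\lambda^{-1}$ if necessary I may assume $q:=\lim_{t\to0}\lambda(t)\cdot p$ exists and $q\notin\lambda\cdot p$. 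If $G\cdot p$ were closed, then $q\in\overline{G\cdot p}=G\cdot p$, say $q=g\cdot p$. But $q$ is fixed by $\lambda$, so $g^{-1}\lambda(s)g\in G_p$ for all $s$; since the image of $\lambda$ (or of its restriction to the connected group $\mathbb{R}_{>0}$ in the real case) is connected and $G_p$ is finite, $\lambda$ must be trivial, so $\lambda\cdot p=\{p\}$ is closed, a contradiction.

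For the reverse direction I would again argue by contraposition: assume $G\cdot p$ is not closed. Since orbits of algebraic group actions are locally closed, $C:=\overline{G\cdot p}\setminus G\cdot p$ is a nonempty closed $G$-invariant set; choose $q\in C$ with $\dim(G\cdot q)$ minimal, so that $\overline{G\cdot q}\subseteq C$ has no boundary orbits of smaller dimension, hence $G\cdot q$ is closed, and $q\notin G\cdot p$. Now invoke the Hilbert--Mumford theorem: because $G$ is algebraic and fully reducible and $q\in\overline{G\cdot p}$, there is an algebraic $1$-parameter subgroup $\lambda$ of $G$ such that $q':=\lim_{t\to0}\lambda(t)\cdot p$ exists and lies in $G\cdot q$, so $q'\notin G\cdot p$. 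This $\lambda$ is nontrivial (else $q'=p\in G\cdot p$), hence $p$ is not $\lambda$-fixed, the orbit $\lambda\cdot p$ is one-dimensional and contains no $\lambda$-fixed point, so $q'\in\overline{\lambda\cdot p}\setminus\lambda\cdot p$ and $\lambda\cdot p$ is not closed, contradicting the hypothesis.

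The main obstacle is precisely the input used in the reverse direction: the existence of a destabilizing rational $1$-parameter subgroup driving $p$ into the boundary, taken inside $G$ itself (this last point uses that $G$ is algebraic). Over $\mathbb{C}$ this is Mumford's geometric invariant theory; over $\mathbb{R}$ it requires the real Hilbert--Mumford theorem of Birkes / Richardson--Slodowy (equivalently, the real Kempf--Ness picture), which I would cite rather than reprove. A secondary, purely bookkeeping nuisance is the $\mathbb{R}^*$-versus-$\mathbb{C}^*$ case distinction in the weight analysis of the forward direction. It is worth emphasizing in the write-up that with $G_p$ finite a nontrivial $\lambda$ can never have $\lim_{t\to0}\lambda(t)\cdot p$ land in $G\cdot p$ (by the same fixed-point argument as above), so ``$\lambda\cdot p$ closed for all $\lambda$'' is genuinely equivalent to the usual Hilbert--Mumford limit condition; identifying these is the only new content beyond the classical theorem.
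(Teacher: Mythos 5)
Your proposal is correct, but it does something the paper does not: the paper offers no proof of this statement at all, simply attributing it to Mumford over $\mathbb{C}$ and to Birkes \cite{Birkes:OrbitsofLinAlgGrps} over $\mathbb{R}$, whereas you actually derive the stated ``orbit-closedness for all $\lambda$'' formulation from the classical ``destabilizing limit'' formulation. Your reverse direction is exactly the standard reduction to Birkes' theorem (if $G\cdot p$ is not closed, some algebraic one-parameter subgroup $\lambda$ of $G$ drives $p$ to a limit outside $G\cdot p$; that limit is a $\lambda$-fixed point not on $\lambda\cdot p$, so $\lambda\cdot p$ is not closed), and your forward direction correctly isolates the one place the finiteness of $G_p$ enters: a $\lambda$-fixed limit $q=g\cdot p$ forces $g^{-1}\lambda(s)g\in G_p$, so $\lambda$ is trivial on the identity component of $k^*$ --- the counterexample of $SL_2$ conjugating a regular semisimple matrix, where $G_p$ is a torus, shows this implication genuinely fails without the finiteness hypothesis, so your emphasis is well placed. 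One microscopic correction in the real case: triviality of $\lambda$ on $\mathbb{R}_{>0}$ only gives that $\lambda(\mathbb{R}^*)$ has at most two elements, so $\lambda\cdot p$ is a set of at most two points rather than $\{p\}$; it is still finite, hence closed, and the contradiction stands. With that adjustment your argument is a complete and correct proof modulo the same classical input the paper cites as a black box.
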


The theorem above was proven over $\mathbb C$ by Mumford and extended to real algebraic groups by Birkes \cite{Birkes:OrbitsofLinAlgGrps}.   Applying the criterion twice, we have the following immediate consequence on the inheritance of closed orbits.
\begin{cor}\label{cor: HMC finite stab -  inheritance of closed} Let $G$ be a  fully reducible, algebraic group such that  $G\cdot p$ is closed and $G_p$ is finite.  For any fully reducible,  algebraic subgroup $G'$ of $G$, we have that   $G'\cdot p$ is closed. \end{cor}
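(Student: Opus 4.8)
The plan is to invoke the Hilbert--Mumford criterion (Theorem~\ref{thm: HMC - classical}) twice: once in its ``only if'' direction for the ambient group $G$, and once in its ``if'' direction for the subgroup $G'$. First I would record that $G'$ meets the hypotheses of Theorem~\ref{thm: HMC - classical}: by assumption it is a fully reducible algebraic subgroup of $GL(V)$, and its stabilizer $G'_p = G' \cap G_p$ is a subgroup of the finite group $G_p$, hence finite. So the criterion applies verbatim with $G'$ in place of $G$.

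Next I would apply the ``only if'' direction to $G$ itself: since $G\cdot p$ is closed and $G_p$ is finite, $\lambda\cdot p$ is closed for every algebraic $1$-parameter subgroup $\lambda$ of $G$. The key observation is that every algebraic $1$-parameter subgroup $\lambda\colon k^*\to G'$ is, after composition with the inclusion $G'\hookrightarrow G$ (a regular map of algebraic groups, so regularity is preserved), an algebraic $1$-parameter subgroup of $G$. Hence $\lambda\cdot p$ is closed for all algebraic $1$-parameter subgroups $\lambda$ of $G'$ as well. Applying the ``if'' direction of Theorem~\ref{thm: HMC - classical} to $G'$ then yields that $G'\cdot p$ is closed.

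There is essentially no obstacle: the content is entirely contained in Theorem~\ref{thm: HMC - classical}, and the only points requiring a word of justification are the finiteness of $G'_p$ (immediate from $G'_p \subseteq G_p$) and the fact that the $1$-parameter subgroups of $G'$ sit among those of $G$. The result is therefore immediate, as the phrasing ``applying the criterion twice'' already suggests.
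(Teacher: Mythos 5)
Your proposal is correct and follows exactly the paper's argument: apply the Hilbert--Mumford criterion to $G$ to get closedness of $\lambda\cdot p$ for all algebraic $1$-parameter subgroups, restrict attention to those $\lambda$ lying in $G'$, note $G'_p\subseteq G_p$ is finite, and apply the criterion again to $G'$. No differences worth noting.
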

 
\begin{proof}
	The orbit $G\cdot p$ being closed implies $\lambda \cdot p$ closed for all algebraic 1-parameter subgroups $\lambda$ of $G$.  Now consider only those $\lambda$ which are subgroups of $G'$.  The stabilizer subgroup $G'_p \subset G_p$ is finite and so applying the Hilbert-Mumford criterion, we see that $G'\cdot p$ is closed.
\end{proof}
The statement of the corollary is rather strong and, in fact, does not hold  without the condition on the stabilizer; see, e.g., \cite[Example 6]{Jablo:GoodRepresentations}.  However, the corollary generalizes in a partial and useful way, as we will see.
In the results below, we do not specify whether the ground field is  $\mathbb R$ or $\mathbb C$ as knowing the result for one ground field implies that it holds for the other, cf \cite{BHC,RichSlow}.  The following are well-known.

\begin{prop}\label{prop: closed orbit of Gp vs G^0p}  Let $G$ be a fully reducible, algebraic group.  Denote the connected component of the identity (in the Hausdorff topology) by $G_0$.  Then $G\cdot p$ is closed if and only if $G_0\cdot p$ is closed.
\end{prop}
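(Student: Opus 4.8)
\textbf{Proof proposal for Proposition~\ref{prop: closed orbit of Gp vs G^0p}.}

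The plan is to prove both implications using the relationship between $G_0$ and $G$ as a finite union of cosets, together with elementary topology of orbit maps for algebraic group actions. Since $G$ is algebraic, $G_0$ has finite index in $G$; write $G = \bigcup_{i=1}^m g_i G_0$ for coset representatives $g_1,\dots,g_m$. Then $G\cdot p = \bigcup_{i=1}^m g_i \cdot (G_0\cdot p)$ is a finite union of translates of the single orbit $G_0\cdot p$, and each translation map $v\mapsto g_i\cdot v$ is a homeomorphism of $V$.

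For the ``if'' direction, suppose $G_0\cdot p$ is closed. Then each $g_i\cdot(G_0\cdot p)$ is closed, being the image of a closed set under a homeomorphism, and hence the finite union $G\cdot p$ is closed. This direction is immediate and requires no algebraic input beyond the finiteness of the index.

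For the ``only if'' direction, suppose $G\cdot p$ is closed. Here I would invoke the standard fact that for an algebraic group action on an affine variety, every orbit is open in its closure, and its closure is a union of the orbit together with orbits of strictly smaller dimension; consequently the boundary $\overline{\mathcal O}\setminus \mathcal O$ of any orbit $\mathcal O$ is a closed $G$-invariant (in particular $G_0$-invariant) set of strictly lower dimension. Apply this to $\mathcal O = G_0\cdot p$. Its closure $\overline{G_0\cdot p}$ is contained in $\overline{G\cdot p} = G\cdot p$, which is a finite union of the $m$ orbits $G_0\cdot(g_i^{-1}\cdot p)$ under $G_0$ — wait, more carefully: $G\cdot p$, being a single $G$-orbit that is closed, decomposes under $G_0$ into finitely many $G_0$-orbits, all of the same dimension $\dim G_0\cdot p$ (since the $g_i$ conjugate $G_0$ to itself and act as homeomorphisms, all these $G_0$-orbits are mutually homeomorphic and in particular equidimensional). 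Now $\overline{G_0\cdot p}\setminus(G_0\cdot p)$ is a closed $G_0$-invariant subset of $G\cdot p$ consisting of $G_0$-orbits of dimension strictly less than $\dim G_0\cdot p$; but every $G_0$-orbit inside $G\cdot p$ has dimension exactly $\dim G_0\cdot p$. Hence $\overline{G_0\cdot p}\setminus(G_0\cdot p) = \emptyset$, i.e., $G_0\cdot p$ is closed.

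The main obstacle is making the ``only if'' direction rigorous without circularity: one must be careful that the lower-dimensional boundary fact is applied to orbits of the \emph{algebraic} group and that $G_0$, though merely the Hausdorff-connected component, is itself a (Zariski-connected) algebraic subgroup of finite index — this is exactly where algebraicity of $G$ is used, and it is a standard fact that for a real algebraic group the identity component in the Hausdorff topology is a finite-index algebraic subgroup. Once that is in hand, the orbit-closure stratification is the standard one (Birkes, or the complex Mumford theory transferred via \cite{BHC,RichSlow}), and the dimension-counting argument closes the proof. Alternatively, if one prefers to avoid the stratification machinery, one can argue directly: $G_0\cdot p$ is a connected component of the locally closed set $G\cdot p$ in its subspace topology — indeed $G\cdot p$ being a homogeneous space of $G$ is a manifold, $G_0\cdot p$ is open and closed in it, and since $G\cdot p$ is closed in $V$, so is each of its (finitely many) connected components; this is arguably the cleaner route and I would present it this way.
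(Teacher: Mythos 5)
The paper does not actually prove this proposition: it is stated as "well-known" with no argument given, so there is nothing to match your proof against, and your write-up supplies a proof the paper omits. Your argument is correct. The "if" direction (finite union of homeomorphic translates of a closed set) is fine, and of your two routes for the "only if" direction, the clopen argument you give at the end is the one to keep: since $G_0$ has finite index in $G$ (Whitney's finiteness of components for real algebraic groups) and is normal, $G\cdot p$ is a finite union of the open subsets $g_i\cdot(G_0\cdot p)$ of the embedded closed submanifold $G\cdot p\cong G/G_p$, so each translate is clopen in $G\cdot p$ and hence closed in $V$. This avoids the one genuine wrinkle in your primary argument: you assert that $G_0$ is itself an algebraic subgroup, but the Hausdorff identity component of a real algebraic group is generally only \emph{pre-algebraic} in the paper's sense (e.g., $\mathbb{R}_{>0}$ inside $\mathbb{R}^\times$ is not Zariski closed), so the orbit-boundary stratification of Birkes cannot be quoted verbatim for $G_0$-orbits without an extra reduction. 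The dimension count can be repaired, but the clopen argument needs no such input, so present that version.
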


We say a group $G$ is \textit{pre-algebraic} if $G$ is the connected component of the identity of an algebraic group.   Obviously, if $G$ is pre-algebraic we have $G = (\overline G)_0$ where $\overline G$ is the Zariski closure of $G$.  We note that   a pre-algebraic group is fully reducible if and only if its Zariski closure is fully reducible; this follows from the fact that a (not necessarily connected) algebraic group is fully reducible if and only if its Lie algebra is so \cite{Mostow:FullyReducibleSubgrpsOfAlgGrps}.

\begin{prop}\label{prop: Gp closed implies stabilizer is reductive}  Let $G$ be an algebraic group.  The stabilizer subgroup $G_p$ of a point $p$ is an algebraic group.  Further, if $G$ is fully reducible and the orbit $G\cdot p$ is closed, then $G_p$ is fully reducible as well.
\end{prop}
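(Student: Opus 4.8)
The first assertion needs no real work: the orbit map $\pi_p\colon G\to V$, $g\mapsto g\cdot p$, is a morphism of varieties, so $G_p=\pi_p^{-1}(p)$ is a Zariski-closed subset of $G$, and being a subgroup it is an algebraic subgroup of $G$.

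For the second assertion I would argue through minimal vectors, the tool already in play in this section. Both the hypothesis and the conclusion behave the same over $\R$ and $\C$ (cf.\ \cite{BHC,RichSlow}), and Richardson--Slodowy's real version of the theory applies directly, so assume $G<GL(V)$ is real and self-adjoint with respect to a fixed inner product on $V$; let $G=KP$ be the associated Cartan decomposition, with $K$ acting orthogonally, $P=\exp(\p)$ acting by positive-definite symmetric operators, and $\theta$ the Cartan involution. The plan has three steps. First, since $G\cdot p$ is closed, the set $\{v\in G\cdot p:\|v\|\le c\}$ is compact for each $c$, so $v\mapsto\|v\|^{2}$ attains its minimum on $G\cdot p$ at some point $m$ (a \emph{minimal vector}). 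Second, I would show that $G_m$ is $\theta$-stable: norm-minimality of $m$ in its orbit forces $\langle X\cdot m,m\rangle=0$ for all $X\in\p$ (the $K$-directions contribute nothing, being isometries), and by the theory of minimal vectors for real reductive groups \cite{RichSlow} this makes the stabilizer subalgebra $\g_m=\{X\in\g:X\cdot m=0\}$, hence $G_m$, $\theta$-stable; consequently $G_m=(K\cap G_m)\exp(\p\cap\g_m)$ is self-adjoint in $GL(V)$ and therefore fully reducible. Third, writing $m=g_0\cdot p$ with $g_0\in G$, we get $G_p=g_0^{-1}G_mg_0$, which is $GL(V)$-conjugate to the fully reducible group $G_m$; since full reducibility is conjugation-invariant, $G_p$ is fully reducible.

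The first and third steps are routine. The one requiring genuine care is the second: showing that the Cartan decomposition of the ambient reductive group descends to the stabilizer of a minimal vector. This is the only place where anything substantive happens, and it is where one uses the classical, characteristic-zero setting --- that $\g_m$ is genuinely $\operatorname{Lie}(G_m)$ and that a $\theta$-stable algebraic subgroup inherits a compatible Cartan decomposition. I expect this to be the main (indeed, essentially the sole) obstacle.

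For completeness I would also note the alternative algebro-geometric route: the closed orbit $G\cdot p$ is a closed subvariety of the affine variety $V$, hence itself affine; in characteristic zero the bijective orbit morphism $G/G_p\to G\cdot p$ is an isomorphism, so $G/G_p$ is affine, and Matsushima's criterion then gives that $G_p$ is reductive, i.e.\ fully reducible (for the real case one again reduces to $\C$, or invokes the real analogue of Matsushima's theorem). There the entire difficulty is absorbed into the citation of Matsushima's criterion; I would present the minimal-vector argument in the body of the paper, since it is self-contained and fits the surrounding material, and merely remark on this second route.
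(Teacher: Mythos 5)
The paper offers no proof of this proposition at all --- it is listed among facts declared ``well-known'' just before Proposition~\ref{prop: closed orbit of Gp vs G^0p} --- so there is no argument of the authors' to compare yours against. Your proposal is the standard Kempf--Ness/Richardson--Slodowy proof and it is correct: reduce to a self-adjoint real group via Mostow \cite{Mostow:SelfAdjointGroups} and the $\R$/$\C$ transfer \cite{BHC,RichSlow}, pick a minimal vector $m$ on the closed orbit, invoke \cite{RichSlow} for the $\theta$-stability of $G_m$, and conjugate back. The one place you rightly flag as needing care is the $\theta$-stability of $G_m$; note that this does not follow from the first-order condition $\langle X\cdot m,m\rangle=0$ alone but from the convexity of $t\mapsto\|\exp(tX)\cdot m\|^2$ for $X\in\p$ (if $k\exp(X)\in G_m$ then $\exp(X)\cdot m$ is again minimal, convexity forces $X\cdot m=0$, and the $K$- and $P$-parts separately stabilize $m$) --- which is precisely the content of the cited theorem of Richardson--Slodowy, so the citation carries the load you assign it. The Matsushima alternative you mention is also valid and is the other standard route.
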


\begin{prop}\cite[Corollary 3.1]{Luna:AdherencesDOrbiteEtInvariants}\label{prop: luna on normalizer of a stab grp having closed orbit} Let $G$ be a fully reducible, algebraic group.  Let $H$ be a fully reducible, algebraic subgroup of $G$ which stabilizes a point $p$.  Then  $G\cdot p$ is closed if and only if $N_G(H)\cdot p$ is closed. 
\end{prop}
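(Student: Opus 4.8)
\emph{Approach.} I would deduce this from the minimal-vector (Kempf--Ness) description of closed orbits of fully reducible linear groups (\cite{Birkes:OrbitsofLinAlgGrps}, \cite{RichSlow}; by the remarks preceding the statement it suffices to work over either ground field). Recall the description: if $G'\subseteq GL(V)$ is fully reducible and algebraic, choose by Mostow \cite{Mostow:FullyReducibleSubgrpsOfAlgGrps} an inner product on $V$ (Hermitian in the complex case) making $G'$ self-adjoint, with Cartan decomposition $\mathfrak g' = \kf' \oplus \p'$ into skew-symmetric and symmetric parts; then $G'\cdot v$ is closed if and only if its orbit contains a \emph{minimal vector}, i.e.\ a point $w$ with $\langle Xw, w\rangle = 0$ for all $X\in\p'$, and the $G'$-stabilizer of a minimal vector is again self-adjoint. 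I will also use two elementary facts: (a) if a subgroup $H$ is self-adjoint then so is $N_G(H)$, since applying $g\mapsto g^{*}$ to $gHg^{-1}=H$ gives $(g^{*})^{-1}Hg^{*}=H$; and (b) by Mostow \cite{Mostow:SelfAdjointGroups}, every fully reducible algebraic subgroup of a self-adjoint group is conjugate, by an element of that group, to a self-adjoint subgroup.

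\emph{The forward implication.} Suppose $G\cdot p$ is closed. Replacing the pair $(H,p)$ by $(gHg^{-1}, gp)$ for $g\in G$ changes neither the hypothesis nor the conclusion (it merely translates the orbit $N_G(H)\cdot p$ by $g$ and replaces $H$ by a conjugate still fixing the new point), so we may first assume $p$ is a minimal vector for $G$; then $G_p$ is self-adjoint (and fully reducible by Proposition~\ref{prop: Gp closed implies stabilizer is reductive}). Since $H\subseteq G_p$, fact (b) applied inside $G_p$ lets us conjugate $H$ by an element of $G_p$ — which fixes $p$ — so as to assume in addition that $H$, and hence $N:=N_G(H)$, is self-adjoint; then $\p_N:=\n\cap\p\subseteq\p$. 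The identity $\langle Xp,p\rangle=0$, valid for all $X\in\p$, holds in particular for all $X\in\p_N$, so $p$ is a minimal vector for $N$ and $N\cdot p$ is closed.

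\emph{The converse.} Suppose $N_G(H)\cdot p$ is closed. By fact (b), after conjugating $H$ (hence $p$ and $N_G(H)$) by an element of $G$ we may assume $H$, and therefore $N:=N_G(H)$, is self-adjoint, so $\h^{*}=\h$ and $\p_N:=\n\cap\p\subseteq\p$; here $H$ still fixes $p$. Replacing $p$ by a minimal vector of its $N$-orbit — still fixed by $H$, because $N$ normalizes $H$ — we may assume $\langle Xp,p\rangle=0$ for all $X\in\p_N$. The crux is that $X\mapsto\langle Xp,p\rangle$ vanishes on $[\h,\g]$: for $Y\in\h$ and $X\in\g$, $\langle [Y,X]p,p\rangle = \langle Xp, Y^{*}p\rangle - \langle Yp, X^{*}p\rangle = 0$, since $Y^{*}\in\h$ (as $\h^{*}=\h$) and $\h$ annihilates $p$. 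Now, since $H$ is reductive, $\g = Z_\g(\h)\oplus[\h,\g]$ as $\ad(\h)$-modules (trivial and nontrivial isotypic parts), and both summands are stable under $X\mapsto X^{*}$ because $\h^{*}=\h$; hence the projection $\pi\colon\g\to Z_\g(\h)$ along $[\h,\g]$ commutes with $\ast$ and so carries $\p$ into $\p\cap Z_\g(\h)\subseteq\p\cap\n=\p_N$, using $Z_\g(\h)\subseteq N_\g(\h)=\n$. Therefore, for $X\in\p$, writing $X=\pi(X)+(X-\pi(X))$ with $\pi(X)\in\p_N$ and $X-\pi(X)\in[\h,\g]$, we get $\langle Xp,p\rangle=\langle\pi(X)p,p\rangle+0=0$; thus $p$ is a minimal vector for $G$, so $G\cdot p$ is closed.

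\emph{Expected main difficulty.} The single genuine idea is the vanishing of $X\mapsto\langle Xp,p\rangle$ on $[\h,\g]$ when $H$ fixes $p$, which is precisely what transfers minimality between $N_G(H)$ and $G$; the rest is linear algebra with $\ast$-stable decompositions. The fussy part is the Cartan-decomposition bookkeeping — arranging, in each direction, that $G$, $H$, and $N_G(H)$ are simultaneously self-adjoint for a single inner product while keeping the relevant point minimal — which is why the argument is organized around repeated $G$-conjugations rather than one normalization. One should also note that the Hilbert--Mumford criterion in the form of Theorem~\ref{thm: HMC - classical} cannot be applied directly here, because the stabilizer $G_p\supseteq H$ need not be finite; this is exactly the reason for working with minimal vectors instead.
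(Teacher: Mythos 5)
Your approach is the standard minimal-vector (Kempf--Ness/Richardson--Slodowy) proof of Luna's criterion; the paper itself gives no proof but simply cites Luna, with \cite{BHC,RichSlow} supplying the passage between ground fields, so the comparison here is with the standard argument rather than with anything in the text. Your normalizations (making $G$, $H$, and hence $N_G(H)$ simultaneously self-adjoint while keeping $p$ minimal and $H$-fixed) are handled correctly, and the forward implication is complete as written.

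There is, however, a genuine gap in the converse when $H$ is disconnected -- a case the statement allows and which matters for the paper's application, where the proposition is applied to the Zariski closure $\overline H$ of a connected group. You use the chain $Z_\g(\h)\subseteq N_\g(\h)=\n$, identifying the Lie-algebra normalizer $N_\g(\h)$ with $\n=\operatorname{Lie}(N_G(H))$. That identification is valid for connected $H$ but fails in general: if $H$ is finite then $\h=0$ and $N_\g(\h)=\g$, whereas $\operatorname{Lie}(N_G(H))$ can be a proper subalgebra (take $G=SL(2,\R)$ and $H$ the order-four group generated by $w=\left(\begin{smallmatrix}0&1\\-1&0\end{smallmatrix}\right)$; then $N_G(H)=SO(2)$). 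Consequently your $\pi(X)$ lies in $Z_\g(\h)\cap\p$, which need not be contained in $\p_N=\operatorname{Lie}(N_G(H))\cap\p$, so the minimality of $p$ for $N_G(H)$ does not control $\langle\pi(X)p,p\rangle$, and your bracket computation only kills the complement $[\h,\g]$ of $Z_\g(\h)$, not the (possibly larger) complement of $\operatorname{Lie}(N_G(H))$. The repair is to work with the group rather than its Lie algebra: since $H^*=H$ and $H$ fixes $p$ (hence so does every $h^*$), the functional $X\mapsto\langle Xp,p\rangle$ is $\Ad(H)$-invariant, so it vanishes on the sum $\mathfrak m$ of the nontrivial $\Ad(H)$-isotypic components of $\g$ ($\Ad(H)$ acts fully reducibly on $\g\subseteq V\otimes V^*$ because $H$ does on $V$). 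Writing $\g=\g^H\oplus\mathfrak m$, both summands are $*$-stable because $*$ permutes isotypic components and fixes the trivial one, and $\g^H=\operatorname{Lie}(Z_G(H))\subseteq\operatorname{Lie}(N_G(H))$. Substituting this decomposition for your $Z_\g(\h)\oplus[\h,\g]$ makes the rest of your converse go through verbatim.
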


From this result, we have the following useful lemma.

\begin{lemma}\label{lemma: luna on centralizer of a stab grp having closed orbit} Let $G$ be a fully reducible, pre-algebraic group and $H$ a connected, fully reducible (not necessarily algebraic) subgroup of $G$ which stabilizes a point $p$.  Then $G\cdot p$ is closed if and only if $Z_G(H)_0\cdot p$ is closed. 
\end{lemma}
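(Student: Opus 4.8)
The plan is to reduce to the purely algebraic situation in which Luna's result, Proposition~\ref{prop: luna on normalizer of a stab grp having closed orbit}, applies, and then to pass from the normalizer of the relevant algebraic group back down to the identity component of its centralizer. The first step is to replace $H$ by its Zariski closure $\bar H$ and $G$ by its Zariski closure $\hat G:=\overline{G}$. Since $G$ is pre-algebraic and fully reducible, $\hat G$ is algebraic and fully reducible with $\hat G_0=G$; since $H$ is fully reducible, $\bar H$ is algebraic and fully reducible; and since $H\subseteq G$ stabilizes $p$ while both ``lies in $\hat G$'' and ``fixes $p$'' are Zariski-closed conditions, $\bar H$ is a fully reducible algebraic subgroup of $\hat G$ that stabilizes $p$. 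I would also record at this stage two elementary facts: centralizing $H$ is a Zariski-closed condition, so $Z_{\hat G}(\bar H)=Z_{\hat G}(H)$ and hence $Z_{\hat G}(\bar H)_0=Z_{G}(H)_0$; and, by Proposition~\ref{prop: closed orbit of Gp vs G^0p} applied to $\hat G$, the orbit $G\cdot p=\hat G_0\cdot p$ is closed if and only if $\hat G\cdot p$ is closed.

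Now I would run the following chain of equivalences. By Luna's Proposition~\ref{prop: luna on normalizer of a stab grp having closed orbit} applied to $\bar H\subset\hat G$, the orbit $\hat G\cdot p$ is closed if and only if $N_{\hat G}(\bar H)\cdot p$ is closed. The normalizer $N_{\hat G}(\bar H)$ is itself algebraic and fully reducible, so by Proposition~\ref{prop: closed orbit of Gp vs G^0p} this is equivalent to $N_{\hat G}(\bar H)_0\cdot p$ being closed. Since $Z_{\hat G}(\bar H)\cdot\bar H$ has finite index in $N_{\hat G}(\bar H)$ and $Z_{\hat G}(\bar H)$ and $\bar H$ commute, we get $N_{\hat G}(\bar H)_0=\bigl(Z_{\hat G}(\bar H)\cdot\bar H\bigr)_0=Z_{\hat G}(\bar H)_0\cdot\bar H_0$; and because $\bar H_0$ fixes $p$ this yields the orbit identity $N_{\hat G}(\bar H)_0\cdot p=Z_{\hat G}(\bar H)_0\cdot p$. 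Combining everything with $Z_{\hat G}(\bar H)_0=Z_G(H)_0$ gives that $G\cdot p$ is closed if and only if $Z_G(H)_0\cdot p$ is closed, which is the claim.

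All of the structural inputs — that Zariski closures of fully reducible groups are fully reducible, that normalizers and centralizers of fully reducible algebraic subgroups are fully reducible and algebraic, and that $Z_{\hat G}(\bar H)\cdot\bar H$ has finite index in $N_{\hat G}(\bar H)$ — are recorded in the preliminaries of this section. The one point requiring genuine care is that one cannot argue with $H$ directly: $H$ need not be algebraic and its Zariski closure may be strictly larger (e.g.\ a dense one-parameter subgroup of a torus), so the argument must be phrased throughout in terms of $\bar H$, and the collapse of the normalizer orbit onto the centralizer-component orbit relies precisely on the fact that all of $\bar H$, in particular $\bar H_0$, lies in the stabilizer of $p$. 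I expect this bookkeeping to be the only real obstacle; no estimates or ideas beyond Luna's theorem and the generalities on fully reducible groups are needed.
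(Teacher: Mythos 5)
Your proof is correct and follows essentially the same route as the paper's: pass to the Zariski closure $\bar H$, apply Luna's normalizer criterion together with Proposition~\ref{prop: closed orbit of Gp vs G^0p}, and collapse $N_{\hat G}(\bar H)_0\cdot p$ onto $Z_G(H)_0\cdot p$ using the finite-index decomposition of the normalizer and the fact that $\bar H$ lies in the (Zariski-closed) stabilizer of $p$. The only cosmetic difference is that you work with $\overline G$ throughout, whereas the paper first treats the case where $G$ is algebraic and then descends to the pre-algebraic case; your explicit observation that $\bar H\subseteq G_p$ is a point the paper leaves implicit.
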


\begin{proof}  We prove the lemma first in the special case that $G$ is algebraic.  Let $\overline H$ denote the Zariski closure of $H$.  First observe that  $Z_G(H) = Z_G(\overline H)$.  As $H$ is fully reducible, we have $H = [H,H] \ Z(H)$, where $Z(H)$ is the center of $H$.  Using the fact that connected, linear semisimple groups are necessarily pre-algebraic, we have $[H,H]$ is pre-algebraic and so $\overline H = \overline{[H,H]} \ \overline{Z(H)}$, where $(\overline{[H,H]})_0=[H,H]$.  

As $H$ is fully reducible we have that $\overline H$ is fully reducible.  Additionally, $\overline H$ is algebraic and so 
	$$N_\mathfrak g(\overline{\mathfrak h}) = Z_\mathfrak g (\overline{\mathfrak h}) + \overline{\mathfrak h},$$
which implies $N_G(\overline H)_0 =   Z_G(\overline H)_0 \cdot \overline H _0=   Z_G(H)_0 \cdot H$.  
As $H$ stabilizes $p$ we see that $N_G(\overline H)_0 \cdot p = Z_G(H)_0 \cdot p$.  Noting that $Z_G(H)$ is algebraic and applying Propositions \ref{prop: closed orbit of Gp vs G^0p} \& \ref{prop: luna on normalizer of a stab grp having closed orbit}, the lemma follows for $G$ algebraic.

For the general case, consider a pre-algebraic group $G_0$ with Zariski closure $G$.  By Proposition \ref{prop: closed orbit of Gp vs G^0p} and the work above, we have $G_0\cdot p$ is closed if and only if $Z_G(H)_0\cdot p$ is closed.  However, $Z_G(H)_0 = Z_{G_0}(H)_0$ as they have the same Lie algebra and the lemma is proven.
\end{proof}

\begin{cor} Let $G$ be a fully reducible, pre-algebraic group.  The orbit $G \cdot p \subset V$ is closed if and only if the stabilizer $H=(G_p)_0$ is fully reducible and $Z_G(H)_0\cdot p$ is closed. 
\end{cor}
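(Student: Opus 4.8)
The plan is to obtain both implications by specializing Lemma~\ref{lemma: luna on centralizer of a stab grp having closed orbit} to the subgroup $H=(G_p)_0$; the one extra ingredient needed is that, when $G\cdot p$ is closed, this $H$ is automatically fully reducible.

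For the ``if'' direction I would argue directly: assuming $H=(G_p)_0$ is fully reducible and $Z_G(H)_0\cdot p$ is closed, note that $H$ is a connected subgroup of the fully reducible pre-algebraic group $G$ that fixes $p$, so Lemma~\ref{lemma: luna on centralizer of a stab grp having closed orbit} applies and yields that $G\cdot p$ is closed if and only if $Z_G(H)_0\cdot p$ is; the latter is assumed.

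For the ``only if'' direction, starting from $G\cdot p$ closed, I would first pass to the Zariski closure $\overline G$, which is algebraic and fully reducible with $G=(\overline G)_0$. Since $\overline G/G$ is finite, $\overline G\cdot p$ is a finite union of translates of the closed set $G\cdot p$, hence closed (this also follows from Proposition~\ref{prop: closed orbit of Gp vs G^0p}). Proposition~\ref{prop: Gp closed implies stabilizer is reductive} then shows $(\overline G)_p$ is fully reducible, and since $(\overline G)_p$ and $G_p$ have the same Lie algebra, their common identity component is $H$; full reducibility descends from the algebraic group $(\overline G)_p$ to its identity component through the Lie algebra, so $H$ is fully reducible. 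Applying Lemma~\ref{lemma: luna on centralizer of a stab grp having closed orbit} once more (now with $H$ known to be fully reducible) gives that $Z_G(H)_0\cdot p$ is closed.

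The whole argument is a chaining of results already established, so there is no substantive obstacle; the one place I would be careful is the forward direction's passage to $\overline G$ --- confirming that closedness of $G\cdot p$ transfers to $\overline G\cdot p$, and that full reducibility of the possibly disconnected group $(\overline G)_p$ descends to $H$ via the stated equivalence between an algebraic group being fully reducible and its Lie algebra being so.
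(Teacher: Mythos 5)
Your argument is correct and is exactly the intended one: the paper states this corollary without proof because it is the immediate chaining of Lemma~\ref{lemma: luna on centralizer of a stab grp having closed orbit} (applied to $H=(G_p)_0$) with Proposition~\ref{prop: Gp closed implies stabilizer is reductive} via the Zariski closure, together with Proposition~\ref{prop: closed orbit of Gp vs G^0p} to transfer closedness between $G\cdot p$ and $\overline{G}\cdot p$. Your care in first establishing full reducibility of $H$ (descending it from the algebraic stabilizer $(\overline{G})_p$ through the common Lie algebra) before invoking the lemma in the forward direction is precisely the point that needs checking, and you handle it correctly.
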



The following is a slight generalization of the Hilbert-Mumford criterion.

\begin{lemma} Let $G$ be a fully reducible, (pre-)algebraic   group such that $H=(G_p)_0 \subset Z(G)$, the center of $G$.    
Then $G\cdot p$ is closed if and only if $\lambda \cdot p$ is closed for all (pre-)algebraic 1-parameter subgroups $\lambda$ of $G$. \end{lemma}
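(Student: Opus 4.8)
The plan is to reduce the statement to the classical Hilbert--Mumford criterion (Theorem~\ref{thm: HMC - classical}) by passing from $G$ to the centralizer of $H$ and then observing that a central, fully reducible stabilizer is essentially harmless. First I would use Lemma~\ref{lemma: luna on centralizer of a stab grp having closed orbit}: since $H=(G_p)_0$ is a connected, fully reducible subgroup stabilizing $p$ (full reducibility of $H$ follows from Proposition~\ref{prop: Gp closed implies stabilizer is reductive} in the ``only if'' direction, and it must be assumed or is automatic since $H\subset Z(G)$ and $G$ is fully reducible, hence its center consists of semisimple elements and $H$ is a torus), the orbit $G\cdot p$ is closed if and only if $Z_G(H)_0\cdot p$ is closed. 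But $H\subset Z(G)$ forces $Z_G(H)=G$, so this step is actually vacuous; the real content must come from a different splitting.

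So instead the key step is to factor out $H$ itself. Because $H\subset Z(G)$, it is normal in $G$, and since $G$ is fully reducible and connected (we may reduce to the connected case by Proposition~\ref{prop: closed orbit of Gp vs G^0p}) we can write $G=[G,G]\cdot Z(G)$, with $H$ a subtorus of the central torus $Z(G)$. The plan is to choose a complementary (pre-)algebraic subgroup $G'$ of $G$ with $G=G'\cdot H$ and $G'\cap H$ finite: take $G'=[G,G]\cdot T'$ where $T'$ is a complementary subtorus to $H$ inside the maximal central torus. Then $G'_p=(G_p\cap G')$ has identity component contained in $H\cap G'$, which is finite, so $(G'_p)_0$ is trivial, i.e. $G'_p$ is finite. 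Since $H$ stabilizes $p$, we have $G\cdot p=G'\cdot p$, so $G\cdot p$ is closed iff $G'\cdot p$ is closed. Now apply the classical Hilbert--Mumford criterion (Theorem~\ref{thm: HMC - classical}) to $G'$: $G'\cdot p$ is closed iff $\lambda\cdot p$ is closed for every (pre-)algebraic $1$-parameter subgroup $\lambda$ of $G'$. Finally, every $1$-parameter subgroup of $G'$ is one of $G$, and conversely any $1$-parameter subgroup $\mu$ of $G$ either lies (up to the central factor $H$, which acts with $\mu\cdot p$ a translate of an $H$-orbit containing $p$ itself) in $G'$ or differs from one in $G'$ by a subgroup of $H$ that fixes $p$, so closedness of $\lambda\cdot p$ for all $\lambda\subset G'$ is equivalent to closedness of $\mu\cdot p$ for all $\mu\subset G$.

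The main obstacle I expect is the last equivalence: matching up $1$-parameter subgroups of $G$ with those of $G'$ modulo $H$. Given a $1$-parameter subgroup $\mu\colon k^*\to G$, its image lies in some maximal torus times $[G,G]$; I would decompose $\mu=\mu'\cdot \mu''$ where $\mu'$ has image in $G'$ and $\mu''$ has image in $H$ (this uses that $H$ splits off the central torus, so the cocharacter lattice splits compatibly). Since $\mu''(k^*)\subset H\subset G_p$, we get $\mu\cdot p=\mu'\cdot p$ as sets, hence $\mu\cdot p$ is closed iff $\mu'\cdot p$ is closed, and $\mu'$ ranges over (essentially all) $1$-parameter subgroups of $G'$ as $\mu$ ranges over those of $G$. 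The care needed is purely bookkeeping: one must ensure the decomposition respects the algebraic (or pre-algebraic) structure and that no $1$-parameter subgroup of $G'$ is missed. Once that is in hand, chaining $G\cdot p$ closed $\iff G'\cdot p$ closed $\iff$ ($\lambda\cdot p$ closed for all algebraic $1$-parameter $\lambda$ of $G'$) $\iff$ ($\mu\cdot p$ closed for all algebraic $1$-parameter $\mu$ of $G$) completes the proof, with the parenthetical ``(pre-)algebraic'' handled uniformly by passing between $G$ and its Zariski closure via Proposition~\ref{prop: closed orbit of Gp vs G^0p} as needed.
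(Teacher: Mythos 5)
Your proposal is correct and takes essentially the same route as the paper's proof: both reduce to the algebraic case, split off a complementary fully reducible subgroup $G'$ to the central stabilizer $H$ so that $G\cdot p = G'\cdot p$ with $G'_p$ finite, apply the classical Hilbert--Mumford criterion (Theorem~\ref{thm: HMC - classical}) to $G'$, and then match $1$-parameter subgroups of $G$ with those of $G'$ by factoring out a central piece that fixes $p$. The only difference is the construction of the complement --- you take $[G,G]$ times a subtorus complementary to $H$ in the central torus, while the paper takes the subgroup $I$ with $\operatorname{Lie} I=\{X\in\g : \mytrace(XY)=0 \mbox{ for all } Y\in\h\}$ --- and this is immaterial.
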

We note that the condition on $H$ makes it a fully reducible subgroup automatically.


\begin{proof}  Let $G$ be a pre-algebraic group with Zariski closure $\overline G$.  Observe that $Z(G)\subset Z(\overline G)$.  This fact together with Proposition \ref{prop: closed orbit of Gp vs G^0p} implies that it suffices to prove the lemma in the  case that $G$ and $\lambda$ are algebraic.  

We begin  by decomposing $G$ as a product $G = IH$, where $I$ is a fully reducible, algebraic subgroup with finite stabilizer.  The group $I$ is defined as the Lie group whose Lie algebra is defined by
	$$Lie~I = \{ X\in \mathfrak g \ | \ \mytrace(XY)=0 \mbox{ for all } Y\in \mathfrak h \}.$$
Details for showing $I$ is fully reducible and algebraic are the same as those given in  Section 6 of \cite{Jablo:ConceringExistenceOfEinstein}, see the discussion after Proposition 6.6.  To see that $G = IH$ is a product of groups, it suffices to show $\mathfrak g = \mathfrak i + \mathfrak h$ is a direct sum of Lie algebras.  

By hypothesis, $\mathfrak h$ commutes with all of $\mathfrak g$ and so commutes with $\mathfrak i$; hence, we simply need to show the sum $\mathfrak g = \mathfrak i + \mathfrak h$ is a vector space direct sum.  As $H$ and $G$ are pre-algebraic subgroups of some $GL(V)$, there exists some inner product on $V$ under which $H$ and $G$ are self-adjoint \cite{Mostow:SelfAdjointGroups}.  Using the resulting inner product on $\mathfrak{gl}(V)$, we see that $\mathfrak i$ is precisely the orthogonal complement of $\mathfrak h$ in $\mathfrak g$, thence we have the direct sum $\mathfrak g = \mathfrak i + \mathfrak h$.

Let $\lambda$ be a (pre-)algebraic 1-parameter subgroup of $G$.  Observe that $\lambda = \lambda_1   \lambda_2 $, where $\lambda_1 $ is a (pre-)algebraic 1-parameter subgroup of $I$ and $\lambda_2$ is a (pre-)algebraic 1-parameter subgroup of $H$.
As $H$ stabilizes $p$, we see from Theorem~\ref{thm: HMC - classical} that $G\cdot p = I\cdot p$ is closed if and only if $\lambda   \cdot p = \lambda_1 \cdot p$ is closed for all (pre-)algebraic 1-parameter subgroups $\lambda_1 $ of $I$ or, equivalently, for all (pre-)algebraic 1-parameter subgroups $\lambda$ of $G$.
\end{proof}

The next result on the inheritance of closed orbits is one of the main technical results needed in the proof of the Key Lemma (see Section \ref{sec: proof of key lemma}).

\begin{cor}\label{cor:inheritance of closed orbits}  Let $G$ be a fully reducible, pre-algebraic  group such that $(G_p)_0 \subset Z(G)$.  
Consider a fully reducible, pre-algebraic   subgroup $G'$ of $G$.  If $G\cdot p$ is closed, then so is the orbit $G'\cdot p$.
\end{cor}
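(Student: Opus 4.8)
\textbf{Proof proposal for Corollary~\ref{cor:inheritance of closed orbits}.}
The plan is to reduce this to the slightly generalized Hilbert--Mumford criterion (the lemma immediately preceding this corollary, asserting that for a fully reducible pre-algebraic $G$ with $(G_p)_0 \subset Z(G)$ the orbit $G\cdot p$ is closed iff $\lambda\cdot p$ is closed for all pre-algebraic 1-parameter subgroups $\lambda$ of $G$), applied twice, exactly as in the proof of Corollary~\ref{cor: HMC finite stab - inheritance of closed}. First I would apply the forward direction of that lemma to $G$: since $G\cdot p$ is closed and $(G_p)_0\subset Z(G)$, every pre-algebraic 1-parameter subgroup $\lambda$ of $G$ has $\lambda\cdot p$ closed. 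Then I would restrict attention to those $\lambda$ that happen to lie in $G'$; for each such $\lambda$ the orbit $\lambda\cdot p$ is still closed, so the hypothesis ``$\lambda\cdot p$ closed for all pre-algebraic 1-parameter subgroups $\lambda$ of $G'$'' is satisfied.

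To invoke the reverse direction of the lemma for $G'$, I must check that $G'$ also satisfies the structural hypothesis $(G'_p)_0\subset Z(G')$. This is the step I expect to be the main obstacle, since in general a stabilizer in a subgroup need not be central in that subgroup. Here, though, $G'_p = G'\cap G_p$, so $(G'_p)_0 \subset (G_p)_0 \subset Z(G)$; thus every element of $(G'_p)_0$ commutes with all of $G$, and a fortiori with all of $G'$, giving $(G'_p)_0\subset Z(G')$. (As the paper already remarks, centrality makes $(G'_p)_0$ automatically fully reducible, so no separate reductivity check is needed.)

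Having verified the hypothesis for $G'$, the reverse direction of the generalized Hilbert--Mumford lemma applies: since $\lambda\cdot p$ is closed for every pre-algebraic 1-parameter subgroup $\lambda$ of $G'$, the orbit $G'\cdot p$ is closed. This completes the argument. The only subtlety worth spelling out in the writeup is the identity $G'_p = G'\cap G_p$ and the resulting chain of inclusions into the center; everything else is a direct citation of the preceding lemma.
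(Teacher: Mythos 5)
Your proposal is correct and is exactly the argument the paper intends: apply the generalized Hilbert--Mumford lemma twice, just as Corollary~\ref{cor: HMC finite stab -  inheritance of closed} follows from Theorem~\ref{thm: HMC - classical}, with the observation that $(G'_p)_0\subset (G_p)_0\subset Z(G)$ forces $(G'_p)_0\subset Z(G')$ --- precisely the point the paper flags in its one-line remark after the statement. Nothing is missing.
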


 One proves this corollary in the same way that one proves Corollary \ref{cor: HMC finite stab -  inheritance of closed}.  We note that $(G'_p)_0$ being central in $G'$ follows from  $(G_p)_0$ being central in $G$.
In the sequel, we make use of the following well-known proposition.

\begin{prop}\label{prop: normal subgroups have closed orbits}  Let $G$ be a fully reducible, pre-algebraic subgroup of $GL(V)$ such that $G\cdot p$ is closed for some $p\in V$.  If $N$ is a normal, fully reducible, pre-algebraic subgroup of $G$, then $N\cdot p$ is closed.
\end{prop}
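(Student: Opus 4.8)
The plan is to reduce to the Hilbert--Mumford criterion by first passing to stabilizers and then exploiting normality of $N$ in $G$. First I would observe that we may assume $G$ is algebraic: by Proposition~\ref{prop: closed orbit of Gp vs G^0p} closedness of $G \cdot p$ is equivalent to closedness of $(\overline{G}) \cdot p$, the Zariski closure $\overline{N}$ is still normal in $\overline{G}$ (normality is preserved under Zariski closure since conjugation is a morphism), and $\overline{N}$ is fully reducible because $N$ is; finally $N \cdot p$ is closed iff $\overline{N} \cdot p$ is, again by Proposition~\ref{prop: closed orbit of Gp vs G^0p}. So assume $G, N$ algebraic and fully reducible with $G \cdot p$ closed.

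Next I would handle the stabilizer. Since $G \cdot p$ is closed, Proposition~\ref{prop: Gp closed implies stabilizer is reductive} gives that the stabilizer $G_p$ is fully reducible and algebraic. Set $H = (G_p)_0$. We want $N \cdot p$ closed; equivalently, letting $N_p = N \cap G_p$ be the stabilizer of $p$ in $N$, we want the orbit $N \cdot p \cong N / N_p$ closed. The natural route is to apply Luna's result (Proposition~\ref{prop: luna on normalizer of a stab grp having closed orbit}) or its corollary: $G \cdot p$ closed implies $Z_G(H)_0 \cdot p$ closed by Lemma~\ref{lemma: luna on centralizer of a stab grp having closed orbit}. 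Here $Z_G(H)_0$ has stabilizer at $p$ equal to a subgroup of $Z(Z_G(H)_0)$ — indeed $H$ is central in $Z_G(H)_0$ — so Corollary~\ref{cor:inheritance of closed orbits} applies to $Z_G(H)_0$ and any fully reducible pre-algebraic subgroup of it. The key structural point to establish is that $N$, or a suitable modification of $N$ capturing its orbit through $p$, sits inside $Z_G(H)_0$ up to the part of $N$ that stabilizes $p$. Concretely: $N$ normal in $G$ implies $N_p = N \cap G_p$ is normal in $G_p$, hence $(N_p)_0$ is normalized by $G$; moreover $H = (G_p)_0$ normalizes $N$ and acts on it, and since everything in sight is fully reducible the normalizer/centralizer decomposition $N_{\g}(\h) = Z_\g(\h) + \h$ lets one replace $N$ by $Z_N(H)_0 \cdot (N_p)_0$ without changing the orbit through $p$ (because $(N_p)_0$ fixes $p$ and a complement to it inside $N$ can be taken to commute with $H$). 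Then $Z_N(H)_0$ is a fully reducible pre-algebraic subgroup of $Z_G(H)_0$, whose orbit through $p$ is closed by Corollary~\ref{cor:inheritance of closed orbits}, and adjoining the stabilizer piece $(N_p)_0$ does not destroy closedness; this yields $N \cdot p$ closed.

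The main obstacle I anticipate is the bookkeeping in the middle step: verifying that one really can arrange a fully reducible pre-algebraic subgroup $N' \le Z_G(H)_0$ with $N' \cdot p = N \cdot p$, i.e. that the "non-stabilizer directions" of $N$ can be chosen to centralize $H = (G_p)_0$. This requires knowing that $H$ acts fully reducibly on $\n$ (true since $H$ is fully reducible and acts linearly via $\Ad$), so that $\n = Z_\n(H) + [\h,\n]'$ for an $H$-invariant complement, combined with the fact that $[\h,\n] \subset \n_p$ because $H$ fixes $p$ and $N$ is normal — this last containment, at the Lie algebra level $\h \cdot \n \subset \n_p$, is the crux and should follow from differentiating the identity that elements of $H$ fix $p$ together with $N$-normality. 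An alternative, possibly cleaner, route that avoids this: apply Proposition~\ref{prop: luna on normalizer of a stab grp having closed orbit} with $H$ replaced by $(N_p)_0$ — which is fully reducible once we know $G \cdot p$ closed forces $G_p$ fully reducible hence $(N_p)_0$ fully reducible — to get $N_G((N_p)_0) \cdot p$ closed, observe $N \subset N_G((N_p)_0)$ by normality of $N$, and then the stabilizer of $p$ in the relevant group is now essentially $(N_p)_0$ which is close to central in $Z$-type subgroups, letting Corollary~\ref{cor:inheritance of closed orbits} finish. I would pursue whichever of these makes the centrality hypothesis of Corollary~\ref{cor:inheritance of closed orbits} easiest to verify.
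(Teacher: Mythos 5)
Your overall strategy --- reducing to the algebraic case and then trying to trade $N$ for a subgroup of $Z_G(H)_0$, where $H=(G_p)_0$ --- does not work, and the step you yourself flag as the crux is in fact false. The containment $[\h,\n]\subset\n_p$ fails in general: take $G=N=SL(2,\R)$ acting on $V=\mathfrak{sl}(2,\R)$ by the adjoint representation and $p$ a regular semisimple element, so that $G\cdot p$ is closed and $H=(G_p)_0$ is a maximal torus $T$. Then $Z_N(H)_0(N_p)_0=T$ fixes $p$, while $N\cdot p=G\cdot p$ is a whole (closed) adjoint orbit, so no fully reducible subgroup of $Z_G(H)_0$ can have the same orbit through $p$ as $N$. (If you prefer $N$ proper, take $G=SL(2,\R)\times SL(2,\R)$ acting on $\mathfrak{sl}(2,\R)\oplus\mathfrak{sl}(2,\R)$ with $N$ one factor.) The infinitesimal identity you get by differentiating ``$H$ fixes $p$'' is $[Y,X]\cdot p=Y\cdot(X\cdot p)$ for $Y\in\h$, $X\in\n$, which has no reason to vanish. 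Your alternative route has the same defect: normality of $N$ in $G$ gives that $N_p=N\cap G_p$ is normal in $G_p$, not that $N$ normalizes $(N_p)_0$; in the example above $N_G((N_p)_0)=N_G(T)$ does not contain $N=G$. Only the opening reduction to the algebraic case via Proposition~\ref{prop: closed orbit of Gp vs G^0p} survives, and it matches the paper.

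The paper's argument uses normality in an entirely different and much softer way. Since $G\cdot p$ is closed in $V$, it is an affine variety on which the fully reducible group $N$ acts, so there exists at least one point $gp\in G\cdot p$ whose $N$-orbit is closed (Luna \cite{Luna:ClosedOrbitsofReductiveGroups} shows this holds for almost all $g$). Normality enters only through the identity $N\cdot gp=gNg^{-1}\cdot gp=g(N\cdot p)$: every $N$-orbit in $G\cdot p$ is a $g$-translate of the single orbit $N\cdot p$, and since $g$ acts on $V$ as a homeomorphism, closedness of one translate forces closedness of $N\cdot p$ itself. No analysis of stabilizers, centralizers, or the Hilbert--Mumford criterion is needed.
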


\begin{remark}
As this result is known to many working in geometric invariant theory, a reference is hard to find and so we provide a short argument for completeness.
\end{remark}

\begin{proof}  By Proposition \ref{prop: closed orbit of Gp vs G^0p}, we may assume that $G$ and $N$ are algebraic.

Since $N$ is a fully reducible, algebraic group acting on the closed variety $G\cdot p$, we know that there exists $g\in G$ such that $N\cdot gp$ is closed.  In fact, this is true for `almost all' $g\in G$; this is the main result of \cite{Luna:ClosedOrbitsofReductiveGroups}.  However, $N\cdot gp = g (N\cdot p)$, as $N$ is normal.  The map $g:V\to V$ being a homeomorphism gives that $N\cdot p$ is closed as well.
\end{proof}

\section{Proof of the Key Lemma~\ref{key}}\label{sec: proof of key lemma}

The goal of this section is to prove the Key Lemma~\ref{key}, which we restate here for convenience:

\begin{lemma}[Key Lemma]\label{k} Let $S$ be a solvable Lie group of Einstein type.   Suppose that $S$ can be written as a semi-direct product $S=S_1\ltimes S_2$ satisfying the following hypotheses:
\begin{itemize} 
\item $S_1$ isomorphically embeds as an Iwasawa subgroup in a semisimple Lie group $G_1$.    In particular, we can write $S_1=A_1N_1$, where $G_1=K_1A_1N_1$ is an Iwasawa decomposition.  
\item The adjoint action of $S_1$ on the ideal $\s_2$ of $\s$ extends to a representation of $G_1$ on $\s_2$.  We thus view $S$ as a subgroup of $G_1\ltimes S_2$.  
\end{itemize}
Then $S_2$ is of Einstein type.
\end{lemma}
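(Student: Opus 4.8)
The plan is to run the whole argument through Nikolayevsky's geometric-invariant-theory criterion (Proposition~\ref{prop.nik2}) together with the inheritance results of Section~\ref{sec: technical lemmas on GIT}. Write $\mathfrak{n}=\nilrad(\mathfrak{s})$, $\mathfrak{n}_1=\nilrad(\mathfrak{s}_1)$ and $\mathfrak{n}_2=\nilrad(\mathfrak{s}_2)$. The first step is structural bookkeeping: since $\mathfrak{s}_1=\mathfrak{a}_1+\mathfrak{n}_1$ is of Iwasawa type, $\mathfrak{n}_1$ is the nilradical of a minimal parabolic of $\mathfrak{g}_1$, so its elements act nilpotently on $\mathfrak{s}_2$ in the given $G_1$-representation; combining this with the fact that a nilpotent derivation of a solvable Lie algebra maps it into the nilradical, a routine computation yields $\mathfrak{n}=\mathfrak{n}_1\ltimes\mathfrak{n}_2$ with $\mathfrak{n}_2$ an ideal of $\mathfrak{n}$. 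Because $S$ is of Einstein type, $\mathfrak{n}$ is an Einstein nilradical (Proposition~\ref{prop.Lau}), so, fixing a pre-Einstein derivation $\varphi$ of $\mathfrak{n}$ and viewing the bracket as $\mu\in V=\wedge^2(\mathbb{R}^n)^*\otimes\mathbb{R}^n$, Proposition~\ref{prop.nik2} gives that $G_\varphi\cdot\mu$ is closed. Conversely, applying Proposition~\ref{prop.nik2} to $\mathfrak{n}_2$, it suffices to show that $G_{\varphi_2}\cdot\mu_2$ is closed in $V_2=\wedge^2(\mathbb{R}^{n_2})^*\otimes\mathbb{R}^{n_2}$, where $\varphi_2$ is a pre-Einstein derivation of $\mathfrak{n}_2$ and $\mu_2=\mu|_{\mathfrak{n}_2\times\mathfrak{n}_2}$; this makes $N_2$ admit a nilsoliton, hence $\mathfrak{n}_2$ an Einstein nilradical, and the passage from there to ``$S_2$ is of Einstein type'' I would handle with Corollary~\ref{cor.nik}, after checking that the standard structure of $\mathfrak{s}$ from Propositions~\ref{prop.heb} and~\ref{prop.Lau} equips $\mathfrak{s}_2=\mathfrak{a}_2+\mathfrak{n}_2$ with an abelian complement acting by semisimple operators whose real parts contain a multiple of $\varphi_2$.

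The heart of the proof is transporting closedness of $G_\varphi\cdot\mu$ in $V$ to closedness of $G_{\varphi_2}\cdot\mu_2$ in the smaller space $V_2$. I would first reconcile the pre-Einstein data: using uniqueness of the pre-Einstein derivation up to automorphism, its characterization by the trace identity over all derivations, and the fact that the $G_1$-action makes the relevant reductive group act on $\mathfrak{n}_2$ by semisimple operators with real weights, one wants to arrange $\varphi$ compatibly with $\mathfrak{n}=\mathfrak{n}_1\oplus\mathfrak{n}_2$, with $\mathfrak{n}_2$ invariant and $\varphi|_{\mathfrak{n}_2}$ proportional to $\varphi_2$, so that the trivial extension of $G_{\varphi_2}$ to $\mathfrak{n}$ lands in (a conjugate of) $G_\varphi$; this is itself a small lemma. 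Now the stabilizer of $\mu$ in $G_\varphi$ has Lie algebra $\Der(\mathfrak{n})\cap\mathfrak{sl}_n(\mathbb{R})$, which is neither finite nor central, so neither the finite-stabilizer inheritance (Theorem~\ref{thm: HMC - classical}, Corollary~\ref{cor: HMC finite stab -  inheritance of closed}) nor a naive push-forward along the restriction map $V\to V_2$ applies directly. Instead I would pass, via Lemma~\ref{lemma: luna on centralizer of a stab grp having closed orbit}, to the centralizer $Z_{G_\varphi}(H)_0$ of the identity component $H$ of the stabilizer --- which is fully reducible because the orbit is closed (Proposition~\ref{prop: Gp closed implies stabilizer is reductive}), still carries $\mu$ in a closed orbit, and in which the stabilizer identity component $H$ is now central. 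At that point Corollary~\ref{cor:inheritance of closed orbits} transports closedness to every fully reducible pre-algebraic subgroup, and Proposition~\ref{prop: normal subgroups have closed orbits} to normal ones; I would apply these to the subgroup built from $G_{\varphi_2}$ acting on the $\mathfrak{n}_2$-block, and then check that closedness of this subgroup's orbit of $\mu$ in $V$ is equivalent to closedness of $G_{\varphi_2}\cdot\mu_2$ in $V_2$. Here the nilpotency of the $\mathfrak{n}_1$-action on $\mathfrak{n}_2$ is essential: there is a one-parameter subgroup (built from $\rho(\exp(-tH_1))$ on $\mathfrak{n}_2$, suitably normalized) that degenerates the $\mathfrak{n}_1$--$\mathfrak{n}_2$ interaction term to zero without disturbing the $\mathfrak{n}_2\times\mathfrak{n}_2$-component, so the extra $\mathfrak{n}_1$-directions cannot contribute a new boundary point.

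I expect this last bridge between the two ambient GIT spaces to be the main obstacle: $G_\varphi$ and $G_{\varphi_2}$ act on different vector spaces, the relevant stabilizers are positive-dimensional and non-central, and the restriction map $V\to V_2$ is not proper on orbits, so a closed orbit does not simply push forward. Overcoming this requires both the reduction to a central-stabilizer situation (which is exactly what makes the inheritance and normal-subgroup results of Section~\ref{sec: technical lemmas on GIT} applicable) and genuine use of the hypotheses --- that $\mathfrak{n}_1$ is the nilradical of an Iwasawa subgroup of a semisimple group $G_1$ and that the $\mathfrak{s}_1$-action on $\mathfrak{s}_2$ extends to a $G_1$-representation --- to guarantee that the $\mathfrak{n}_1$-directions and the interaction term can be removed in a GIT-compatible way. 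The remaining ingredients --- the structural identification of $\mathfrak{n}$, the compatibility of $\varphi$ with the decomposition, and the final step from ``$\mathfrak{n}_2$ is an Einstein nilradical'' to ``$S_2$ is of Einstein type'' via Corollary~\ref{cor.nik} --- I expect to be comparatively routine given the background results.
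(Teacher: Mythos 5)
Your overall architecture --- Nikolayevsky's criterion applied to both $\n$ and $\n_2$, Luna's centralizer lemma to reach a central-stabilizer situation, then the inheritance results of Section~\ref{sec: technical lemmas on GIT} --- is the paper's strategy, and you correctly locate the difficulty in the bridge between the two GIT spaces. But both concrete devices you propose for that bridge fail. First, the trivial extension $i(G_{\varphi_2})$ does \emph{not} land in a conjugate of $G_\varphi$, and $\varphi|_{\n_2}$ is \emph{not} proportional to a pre-Einstein derivation of $\n_2$: writing $\varphi=(\ad(W_1+W_2)|_{\n})^{\R}$ with $W_1\in\af_1$, the restriction $\varphi|_{\n_2}$ contains the generically nonzero summand $\rho(W_1)^{\R}|_{\n_2}$ coming from the restricted-root grading of the $\g_1$-module $\n_2$. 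The correct statement is that $\varphi_2=(\ad(W_2)|_{\n_2})^{\R}$ is pre-Einstein for $\n_2$, after $W_2$ is moved into a complement centralized by $\g_1$ (Lemma~\ref{lem.ady}); proving this is Proposition~\ref{prop.a2} and requires the tracelessness of finite-dimensional representations of the semisimple $\g_1$ on eigenspaces of test derivations together with a reduction of the trace identity to a subalgebra (Lemmas~\ref{lemma:reducing pre-Einstein eqn} and~\ref{lem.extend}) --- it is not a consequence of uniqueness of pre-Einstein derivations. A general $X\in\g_{\varphi_2}$ commutes with $\varphi_2$ but not with $\rho(W_1)$, so $i_*(X)$ generically violates both conditions $[\varphi,i_*X]=0$ and $\mytrace(\varphi\,i_*X)=0$ defining $\g_\varphi$; only the centralizer $Z_{\g_{\varphi_2}}(\rho(\g_1))$ embeds (Lemma~\ref{lem.rho}).

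Second, degenerating $\mu_{12}$ by $\rho(\exp(-tH_1))$ cannot establish closedness: driving the interaction term to $0$ produces a point $\mu_1+\mu_2$ in the \emph{closure} of an orbit, which is the wrong direction, and it does not address the actual issue that for a sequence $\alpha_k\in G_{\varphi_2}$ with $\alpha_k\cdot\mu_2$ convergent in $V_2$, the $\mu_{12}$-components of $i(\alpha_k)\cdot\mu$ can diverge in $V$. The paper sidesteps this by applying Luna's lemma \emph{first on the $V_2$ side}: it replaces $G_{\varphi_2}$ by $C=Z_{G_{\varphi_2}}(\rho(\g_1)+\mathfrak{d})_0$, where $\rho(\g_1)+\mathfrak{d}$ is a fully reducible subalgebra of the stabilizer of $\mu_2$ (Corollary~\ref{cor.plan}); since $C$ commutes with $\rho(\n_1)$, $i(C)$ fixes $\mu_1+\mu_{12}$, so $i(C)\cdot\mu=\mu_1+\mu_{12}+C\cdot\mu_2$ and the equivalence of closedness in $V$ and $V_2$ becomes trivial. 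Closedness of $i(C)\cdot\mu$ is then extracted from that of $G_\varphi\cdot\mu$ through the chain $G_\varphi\rightsquigarrow G_\varphi^{\af}\rightsquigarrow F\rightsquigarrow (Z_F(E))_0\rightsquigarrow i(C)$ (Luna, normal subgroup, Luna, then Corollary~\ref{cor:inheritance of closed orbits}); each intermediate group is needed precisely so that $i(C)$ ends up inside a group with central stabilizer whose orbit is already known to be closed. Without these two repairs your argument does not close.
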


\begin{hypoth}\label{first.simp}  We may assume that $G_1$ is semisimple of noncompact type.  Indeed, in the language of Lemma~\ref{k}, the group $S_1$ lies in the noncompact part $G_{nc}$ of $G_1$, and the hypotheses of the Lemma trivially remains true if we replace $G_1$ by $G_{nc}$.

\end{hypoth}

We will apply Nikolayevsky's technique, described in Subsection~\ref{subsec.nik} to prove Lemma~\ref{k}.   

\subsection{The pre-Einstein derivation of $N_2$.}\label{subsec.pre-einst}

\begin{nota}\label{nota.rho}  The representation of $G_1$ on $\s_2$ in Lemma~\ref{k} leaves the nilradical $\n_2$ of $\s_2$ invariant.   We will denote by $\rho:\g_1\to \operatorname{End}(\n_2)$ the induced representation of the Lie algebra $\g_1$ on $\n_2$.
\end{nota}

\begin{lemma}\label{lem.jab} Let $H$ be a semisimple Lie group of noncompact type and let $\rho: H\to GL(V)$ be a finite-dimensional representation.  Let $H=KS$ be an Iwasawa decomposition.  If an element $T\in End(V)$ commutes with all elements of $\rho(S)$, then it commutes with all of $\rho(H)$.  

\end{lemma}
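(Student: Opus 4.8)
\textbf{Plan of proof of Lemma~\ref{lem.jab}.}
The plan is to recast the statement as an equality of fixed-point sets for the conjugation action and then run a short $\mathfrak{sl}_2$ argument. Assume $H$ is connected. Fix a Cartan involution $\theta$ of $\g_1=\operatorname{Lie}(H)$, with Cartan decomposition $\g_1=\kf+\p$, a maximal split abelian $\af\subset\p$, and restricted root space decomposition $\g_1=(\mf+\af)+\sum_{\alpha\in\Sigma}\g_\alpha$, where $\mf+\af=Z_{\g_1}(\af)$, $\n=\sum_{\alpha>0}\g_\alpha$ (so that $\s=\af+\n$ and $\operatorname{Lie}(S)=\s$), $\n^-=\sum_{\alpha>0}\g_{-\alpha}$, and $\theta(\g_\alpha)=\g_{-\alpha}$. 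Since $S$ and $H$ are connected, $T$ commutes with $\rho(S)$, respectively $\rho(H)$, if and only if $T$ commutes with $\rho_*(\s)$, respectively with $\rho_*(\g_1)$. So I would set $\cf=\{X\in\g_1:[\rho_*(X),T]=0\}$; by hypothesis $\s\subseteq\cf$, and $\cf$ is a Lie subalgebra of $\g_1$, since if $T$ commutes with $\rho_*(X)$ and with $\rho_*(Y)$ then the Jacobi identity gives $[\rho_*([X,Y]),T]=[[\rho_*(X),\rho_*(Y)],T]=0$. The goal then becomes $\cf=\g_1$.

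The heart of the argument is to show $\n^-\subseteq\cf$. Fix $\alpha>0$ and a nonzero $F\in\g_{-\alpha}$. A standard fact about restricted root systems is that $F$ can be completed to an $\mathfrak{sl}_2$-triple $(E,H_0,F)$ with $E\in\g_\alpha$ and $H_0\in\af$ (concretely one may take $E$ a suitable multiple of $\theta(F)$, and $H_0=[E,F]$, which lies in $[\g_\alpha,\g_{-\alpha}]\cap\p=\af$, after rescaling using $\alpha(H_0)>0$). Since $E\in\n\subseteq\s$ and $H_0\in\af\subseteq\s$, both $E$ and $H_0$ lie in $\cf$. Now view $V$, hence also $\operatorname{End}(V)\cong V\otimes V^*$, as a module over this copy of $\mathfrak{sl}_2$, where $X$ acts on $\operatorname{End}(V)$ by $T\mapsto[\rho_*(X),T]$. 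Then $T$ is annihilated by $E$ and by $H_0$; i.e.\ $T$ is a highest weight vector of weight $0$. The $\mathfrak{sl}_2$-submodule it generates is finite-dimensional of highest weight $0$, hence trivial, so $F$ also annihilates $T$, that is, $F\in\cf$. As $\alpha$ and $F$ were arbitrary, $\n^-\subseteq\cf$.

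To finish, I would use that $\n+\n^-$ generates $\g_1$ as a Lie algebra. The subalgebra $\mathfrak{j}$ it generates is $\theta$-stable (as $\theta$ interchanges $\n$ and $\n^-$) and is an ideal of $\g_1$, since $\ad(\mf+\af)$ preserves both $\n$ and $\n^-$ and $\g_1=(\mf+\af)+\n+\n^-$. Hence $\mathfrak{j}$ is a sum of simple ideals of $\g_1$; and since $H$ is of noncompact type, every simple ideal $\h_i$ of $\g_1$ has nonzero Iwasawa nilradical $\n\cap\h_i\subseteq\n\subseteq\mathfrak{j}$, which forces $\h_i\subseteq\mathfrak{j}$ and therefore $\mathfrak{j}=\g_1$. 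Since $\cf$ is a subalgebra containing $\n+\n^-$, we conclude $\cf=\g_1$, which is exactly the assertion of the lemma.

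The only non-formal ingredient is the $\mathfrak{sl}_2$ observation that a weight-zero highest weight vector generates the trivial module; the rest is the standard structure theory of Iwasawa decompositions. The step requiring the most care is the classical fact that, in the non-split case, a nonzero vector of a restricted root space $\g_\alpha$ still extends to an $\mathfrak{sl}_2$-triple whose semisimple element lies in $\af$ — this is precisely what lets the weight-zero condition (coming from the fact that $T$ commutes with $\rho_*(\af)$) be brought to bear.
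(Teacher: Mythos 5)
Your proof is correct, but it takes a genuinely different route from the paper's. The paper argues at the group level in one stroke: writing $h=ks$ with $k\in K$, $s\in S$, the conjugation orbit $\rho(h)T\rho(h)^{-1}=\rho(k)T\rho(k)^{-1}$ reduces to a $\rho(K)$-orbit, hence is compact, and the paper then invokes \cite[Lemma 7.2]{Jablo:StronglySolvable}, which says that a compact orbit of a finite-dimensional representation of a semisimple group of noncompact type is a single point. Your argument is instead infinitesimal and self-contained: you pass to the centralizing subalgebra $\cf\supseteq\s$, use the standard $\mathfrak{sl}_2$-triple $(E,H_0,F)$ with $E=c\,\theta(F)\in\g_\alpha$ and $H_0=[E,F]\in\g_0\cap\p=\af$ to see that $T$, being a weight-$0$ vector killed by $E$ in the finite-dimensional $\mathfrak{sl}_2$-module $\operatorname{End}(V)$, generates a trivial submodule and is therefore killed by $F$ as well; then you observe that $\n+\n^-$ generates a $\theta$-stable ideal meeting every (noncompact) simple factor, hence all of $\g_1$. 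All the individual steps check out: $\cf$ is a subalgebra by Jacobi, the $\mathfrak{sl}_2$-triple construction through a restricted root space is the classical one (the point being that $[\theta F,F]$ is $\theta$-anti-invariant and so lands in $\g_0\cap\p=\af$, with $\alpha([\theta F,F])\neq 0$ by the Killing-form computation), and the weight-zero highest-weight observation is standard. What your route buys is the elimination of the external citation and a purely algebraic proof; what the paper's route buys is brevity and the avoidance of any root-space bookkeeping. The only (harmless) caveat is your standing assumption that $H$ is connected, which is implicit in the phrase ``Iwasawa decomposition'' and holds in the paper's application, where the lemma is applied to the connected noncompact Levi part.
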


\begin{proof} $H$ acts on $End(V)$ by $(h,T)\mapsto \rho(h)T\rho(h)^{-1}$ for $h\in H$, $T\in End(V)$.   If $T$ commutes with $\rho(S)$, then the orbit of $T$ under the action of $H$ is compact.  By \cite[Lemma 7.2]{Jablo:StronglySolvable}, every compact orbit of a finite-dimensional representation of a semisimple Lie group of noncompact type consists of a single point.   The lemma follows.

\end{proof}

\begin{lemma}\label{lem.ady} Let $H$ be a connected Lie group and $N$ its nilradical.   Let $W\in\h$, and suppose that $\ad(W)|_\n:\n\to\n$ is a non-singular derivation.   Then the orbit of $W$ under $\Ad_H(N)$ is given by
$$\Ad_H(N)(W)=\{W+X: X\in\n\}.$$

\end{lemma}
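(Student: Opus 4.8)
\textbf{Proof plan for Lemma~\ref{lem.ady}.}

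The plan is to show the two inclusions separately. The inclusion $\Ad_H(N)(W)\subseteq\{W+X:X\in\n\}$ is the easy direction: for $n\in N$ we write $n=\exp(Y)$ for some $Y$ in a suitable subset of $\n$ (using that $N$ is nilpotent, hence $\exp$ is surjective onto the connected group $N$, or more simply decomposing $n$ as a product of exponentials), and then $\Ad_H(\exp Y)(W)=e^{\ad(Y)}(W)=W+\ad(Y)(W)+\tfrac12\ad(Y)^2(W)+\cdots$. Since $\n$ is an ideal of $\h$, each term $\ad(Y)^k(W)$ for $k\ge 1$ lies in $\n$, so the whole tail lies in $\n$ and $\Ad_H(n)(W)\in W+\n$. (One should note the orbit of a \emph{product} of exponentials is handled by iterating, or by observing $\Ad_H(N)(W)-W\subseteq\n$ directly since $\Ad_H(N)$ acts trivially on $\h/\n$, as $N$ is normal and unipotent on $\h/\n$ — in fact $[\n,\h]\subseteq\n$ gives $\Ad_H(N)$ fixes $\h/\n$ pointwise.)

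The substantive direction is surjectivity: every element $W+X$ with $X\in\n$ is of the form $\Ad_H(\exp Y)(W)$ for some $Y\in\n$. Consider the map $\Phi:\n\to\n$ defined by $\Phi(Y)=\Ad_H(\exp Y)(W)-W = (e^{\ad(Y)}-\Id)(W)$, which indeed lands in $\n$ by the previous paragraph; I want to show $\Phi$ is onto. The key observation is that the differential of $\Phi$ at $Y=0$ is $Y\mapsto \ad(Y)(W)=-\ad(W)(Y) = -\,\ad(W)|_\n(Y)$, which is invertible by the hypothesis that $\ad(W)|_\n$ is non-singular. However, this only gives local surjectivity near $W$, so one needs a global argument. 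The natural approach is to filter $\n$ by the terms of its lower (or upper) central series, $\n=\n^{(1)}\supseteq\n^{(2)}\supseteq\cdots\supseteq\n^{(c)}\supseteq\{0\}$ with $[\n,\n^{(i)}]\subseteq\n^{(i+1)}$, each $\n^{(i)}$ being $\ad(W)$-invariant (since $\ad(W)$ is a derivation of $\n$, it preserves the characteristic ideals $\n^{(i)}$), and argue by downward induction on the filtration: given a target $W+X$, one first solves the equation modulo $\n^{(2)}$ using invertibility of the induced map $\ad(W)$ on $\n/\n^{(2)}$, then corrects the error, which now lies in $\n^{(2)}$, using invertibility on $\n^{(2)}/\n^{(3)}$, and so on. Because the correction terms at each stage interact only through strictly deeper filtration pieces (the higher-order terms $\ad(Y)^k(W)$ for $k\ge 2$ with $Y\in\n$ land in deeper pieces of the filtration once $Y$ itself is controlled), the induction closes in finitely many steps.

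The main obstacle will be bookkeeping the interaction between the linear term $-\ad(W)(Y)$ and the higher-order terms $\tfrac{1}{k!}\ad(Y)^k(W)$ in $\Phi(Y)$ when running the filtration induction — one must check that these higher terms do not obstruct solving at each stage, i.e.\ that after choosing $Y$ to solve the equation modulo $\n^{(i+1)}$, the residual error genuinely lies in $\n^{(i+1)}$ and not just in $\n^{(i)}$. A clean way to package this is to note that $\Phi$ restricted to each $\n^{(i)}$ maps into $\n^{(i)}$, induces on the graded quotient $\n^{(i)}/\n^{(i+1)}$ exactly the linear isomorphism $-\ad(W)$ (all higher-order corrections vanish on the graded piece), and then invoke a standard fixed-point / successive-approximation argument: surjectivity of a polynomial self-map of a nilpotently filtered vector space whose associated graded map is a linear isomorphism. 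Alternatively, and perhaps more efficiently, one can observe that $\Ad_H(N)$ is a unipotent algebraic group acting on the affine space $W+\n$, the orbit map $n\mapsto\Ad_H(n)(W)$ is then a morphism of algebraic varieties with orbit closed (unipotent orbits are always closed), and its differential at the identity is surjective onto $\n$ by the non-singularity hypothesis, so the orbit is open in the irreducible closed set $W+\n$, hence equals it.
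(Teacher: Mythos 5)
Your proposal is correct. Your primary route (filter by the lower central series and linearize on graded pieces) is essentially the paper's own argument in different packaging: the paper inducts on the step size of $\n$, first solving the problem in $H/[N,N]$, where the nilradical is abelian and the equation is exactly the linear one $-\ad(W)(Y)=X$, and then pushing the residual error, which lies in $[\n,\n]$, into the subalgebra $\R V+[\n,\n]$ whose nilradical $[\n,\n]$ has strictly smaller step; the product $n_2n_1$ of the two group elements plays the role of your additive correction $Y_1+Y_2$. The bookkeeping obstacle you flag does close as you hope: if the correction $Y'$ lies in $\n^{(i+1)}$, then every cross term between $Y'$ and the previously chosen $Y$'s, and every term $\ad(Y')^k(W)$ with $k\ge 2$, lies in $[\n,\n^{(i+1)}]\subseteq \n^{(i+2)}$, so on the graded piece $\n^{(i+1)}/\n^{(i+2)}$ only $-\ad(W)(Y')$ survives; and $\ad(W)$ does induce an isomorphism there, since as a derivation it preserves each $\n^{(i)}$ and, being injective on the finite-dimensional $\n^{(i)}$, is bijective on it. Your second route is genuinely different from the paper's and also valid: each $\ad_\h(Y)$ with $Y\in\n$ is nilpotent on $\h$ (as $\ad(Y)\h\subseteq\n$ and $\ad(Y)|_\n$ is nilpotent), so $\Ad_H(N)$ is unipotent, its orbit is closed by Kostant--Rosenlicht, and the surjective differential makes the orbit open in the connected affine space $W+\n$, forcing equality. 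That argument is shorter but imports the closedness of real unipotent orbits, whereas the paper's induction is self-contained and elementary.
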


\begin{proof}  We induct on the step size of the nilpotent Lie algebra $\n$.   If $\n$ is abelian, then we have 
$\Ad(\exp(Y))(W)=W+[Y,W]=W-\ad(W)(Y)$.   Thus the lemma follows in this case from the non-singularity of $\ad(W)|_\n$.

For the general case, let $X\in\n$.  We need to find $n\in N$ such that $\Ad_H(n)(W)=W+X$.  Write $[N,N]$ for the normal subgroup of $H$ with Lie algebra $[\n,\n]$, and let $\bar{H}=H/[N,N]$.  Let $\pi:H\to \bar{H}$ be the homomorphic projection.   Since $\bar{H}$ has abelian nilradical $\bar{N}=N/[N,N]$, there exists $\bar{n}_1\in \bar{N}$ such that $\Ad_{\bar{H}}(\bar{n}_1)(\bar{W})=\bar{W}+\bar{X}$.   Choose $n_1\in N$ such that $\pi(n_1)=\bar{n}_1$.   We then have 
$$V:=\Ad(n_1)(W)\equiv W + X\,\,\,\rm{mod}\,[\n,\n].$$
 Set $U=(W+X)-V\in[\n,\n]$.   Note that $\ad(V)|_{\n}$ is a non-singular derivation and thus restricts to a non-singular derivation of $[\n,\n]$.   Let $\s$ be the subalgebra of $\h$ given by $\s=\R V+ [\n,\n]$ and let $S$ denote the corresponding connected subgroup of $H$.
 The Lie algebra $\s$ has nilradical $[\n,\n]$.   Since the step size of $[\n,\n]$ is less than that of $\n$, the inductive hypothesis gives us an element $n_2$ of $[N,N]$ such that $\Ad_S(n_2)(V)=V+U$.   Note that $\Ad_H(n_2)(V)=\Ad_S(n_2)(V).$   Let $n=n_2n_1$.   We then have 
 $$\Ad_H(n)(W) =\Ad_H(n_2)(V)=V+U=W+X$$
 and the lemma follows.
\end{proof}

\begin{prop}\label{prop.a2}  We assume the hypotheses of Lemma~\ref{k} and~\ref{first.simp}.  Then, letting $\n_2=\nilrad(\s_2)$, there exists an abelian complement $\af_2$ of $\n_2$ in $\s_2$ such that:
\begin{enumerate} 
\item Letting $\af=\af_1+\af_2$ and $\n=\n_1+\n_2$, then $\s=\af+\n$ is a standard decomposition of $\s$.  (See Remark~\ref{rem.maxred} for the definition of standard decomposition.)
\item $\af_2$ commutes with $\g_1$.
\item There exist an element $W=W_1+W_2\in \af$ with $W_i\in \af_i$, $i=1,2$ such that, in the notation of~\ref{nota.ss}, $\varphi:=\ad(W)|_{\n}^\R$  is a pre-Einstein derivation of $\n$ and $\varphi_2:=\ad(W_2)|_{\n_2}^\R$ is a pre-Einstein derivation of $\n_2$.  
\item $\varphi_2$ is positive-definite.
\end{enumerate}
\end{prop}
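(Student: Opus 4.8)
\emph{Overview and the nilradical.} The plan is to pin down $\n:=\nilrad(\s)$, realize $\af_2$ inside the centralizer of $\g_1$ in $\s_2$, and then read off (iii)--(iv) from the resulting block decomposition of $\ad(W)|_\n$. Applying the identity $[\lf,\operatorname{Rad}(\lf)]\subseteq\nilrad(\lf)$ to $\lf=\g_1\ltimes\s_2$ (whose radical is $\s_2$ and whose nilradical, since $\g_1$ is semisimple, is $\n_2:=\nilrad(\s_2)$) gives $[\g_1,\s_2]\subseteq\n_2$. Hence $[\s,\s]=[\s_1,\s_1]+[\s_1,\s_2]+[\s_2,\s_2]\subseteq\n_1+\n_2$, while $\n_1+\n_2$ is a nilpotent ideal of $\s$: it is an ideal because $\n_2$ is $\g_1$-invariant and $[\s_2,\n_1]\subseteq[\g_1,\s_2]\subseteq\n_2$, and it is nilpotent because nilpotent elements of $\g_1$ act nilpotently on $\n_2$. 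Since $\s$ is of Einstein type it is standard (Proposition~\ref{prop.Lau}), so $\nilrad(\s)=[\s,\s]$, and therefore $\n=\n_1+\n_2$ as a vector space.

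\emph{Constructing $\af_2$ and proving (i)--(ii).} The subalgebra $\af_1$ is fully $\ad$-reducible in $\s$: it acts on $\s_1$ by commuting $\R$-semisimple operators (Iwasawa type) and on $\s_2$ through $\rho$, again $\R$-semisimply since $\af_1$ consists of $\R$-semisimple elements of $\g_1$. Enlarge $\af_1$ to a maximal fully $\ad$-reducible subalgebra $\af$ of $\s$; by Remark~\ref{rem.maxred} this yields a standard decomposition $\s=\af+\n$, realized orthogonally by an Einstein metric $g$. Projecting $\s\to\s/\s_2\cong\s_1$ and invoking Mostow's conjugacy of maximal fully $\ad$-reducible subalgebras of $\s_1$ shows this projection carries $\af$ onto $\af_1$, so $\af=\af_1\oplus\af_2$ with $\af_2:=\af\cap\s_2$; a dimension count, $\dim\af_2=(\dim\s-\dim\n)-\dim\af_1=\dim\s_2-\dim\n_2$, makes $\af_2$ an abelian complement of $\n_2$ in $\s_2$, which with the first paragraph gives (i). For (ii) I would additionally arrange $\n_1\perp\n_2$. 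Then for $A\in\af_2$ the operator $\ad(A)|_\n$ maps all of $\n$ into $\n_2$ (first paragraph), so $0$ is an eigenvalue and the symmetric and skew parts $S_A,T_A$ of Proposition~\ref{prop.heb}(ii), being polynomials without constant term in $\ad(A)|_\n$, also map $\n$ into $\n_2$; symmetry of $S_A$, skew-symmetry of $T_A$, and $\n_1\perp\n_2$ then force $S_A|_{\n_1}=T_A|_{\n_1}=0$, i.e.\ $[\af_2,\n_1]=0$. With $[\af_2,\af_1]=0$ (from $\af$ abelian), $\ad(A)|_{\n_2}$ commutes with $\rho(A_1N_1)=\rho(S_1)$, and Lemma~\ref{lem.jab} upgrades this to commutation with all of $\rho(G_1)$, which is (ii).

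\emph{Proving (iii)--(iv).} By Proposition~\ref{prop.heb}(iii) and Proposition~\ref{prop.nik}(ii), a suitable positive multiple of the mean-curvature vector $H\in\af$ has $\ad(\cdot)^\R|_\n$ equal to a pre-Einstein derivation of $\n$; take $W$ to be such an element and write $W=W_1+W_2$ with $W_i\in\af_i$. By (ii), $\ad(W_2)|_{\n_1}=0$ and $\ad(W_1)$ preserves $\n_1$, so $\ad(W)|_\n$ is block-diagonal for $\n=\n_1\oplus\n_2$, with $\n_2$-block $\rho_*(W_1)+\ad(W_2)|_{\n_2}$, a sum of commuting $\R$-semisimple operators. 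To show $\varphi_2:=\ad(W_2)^\R|_{\n_2}$ is a pre-Einstein derivation of $\n_2$ one checks $\mytrace(\varphi_2 D)=\mytrace(D)$ for all $D\in\Der(\n_2)$: any $D$ commuting with $\rho_*(\n_1)$ extends by $0$ on $\n_1$ to a derivation of $\n$, where the identity for $\n$ applies, and Lemma~\ref{lem.jab} removes the restriction on $D$; what remains is to account for the correction term $\rho_*(W_1)$, and this is where one may have to replace $W$ by a different element of $\af$ realizing the pre-Einstein derivation of $\n$ whose $\af_2$-component simultaneously realizes that of $\n_2$ -- possible since pre-Einstein derivations are unique up to automorphism. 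Positivity of $\varphi_2$ (statement (iv)) then follows from the positivity, after rescaling, of the block-diagonal $\ad(W)^\R|_\n$ (Proposition~\ref{prop.heb}(iii)) once the effect of $\rho_*(W_1)$ is pinned down by the trace normalization.

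\emph{Main obstacle.} The hard part is the joint normalization underlying (ii) and (iii): putting $\af_1$, $\af_2$ and the Einstein metric into a common position in which $\af_2$ centralizes $\g_1$, $\n_1\perp\n_2$, and both pre-Einstein derivations are realized at once inside $\ad(\af)$. I expect to effect this using the uniqueness statements of Propositions~\ref{prop.heb} and~\ref{prop.Lau}(iii) together with Lemma~\ref{lem.ady}, which moves $\s_1$ (hence $\n_1$) into position by an inner automorphism from $N_2$. The term $\rho_*(W_1)$, nonzero precisely because $\mytrace\circ\ad_\s$ does not vanish on $\af_1$, is the principal source of bookkeeping.
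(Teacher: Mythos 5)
Your construction of $\af$ as a maximal fully $\ad$-reducible subalgebra containing $\af_1$, with $\af_2=\af\cap\s_2$, matches the paper's first step, and your identification of $\n=\n_1+\n_2$ is fine. But the proof of (iii) has a genuine gap. You verify $\mytrace(\varphi_2 D)=\mytrace(D)$ only for derivations $D$ of $\n_2$ that commute with $\rho(\n_1)$ (those are the ones that extend by zero to derivations of $\n$), and then assert that ``Lemma~\ref{lem.jab} removes the restriction on $D$.'' It does not: that lemma upgrades commutation with $\rho(S_1)$ to commutation with $\rho(G_1)$, but it says nothing about derivations of $\n_2$ that fail to commute with $\rho(\s_1)$ in the first place, and such derivations exist in general. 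The pre-Einstein condition must hold for \emph{all} of $\Der(\n_2)$, and passing from the subalgebra $\dercntwo$ to all of $\Der(\n_2)$ is exactly the content of the paper's Lemma~\ref{lemma:reducing pre-Einstein eqn}: one chooses a maximal fully reducible subalgebra $\Der_1+\Der^{\abc}$ of $\Der(\n_2)$ containing $\dercntwo+\af_2+\rho(\g_1)$, so that $\Der^{\abc}\subset\dercntwo$ contains the genuine pre-Einstein derivation $\psi$, and then the identity $\mytrace(\psi-\varphi_2)^2=0$ together with simultaneous real diagonalizability forces $\psi=\varphi_2$. This trace argument is the missing idea; without it your verification on a subalgebra proves nothing.

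There are two further unresolved points. First, your proof of (ii) rests on being able to ``arrange $\n_1\perp\n_2$'' for the Einstein metric realizing the standard decomposition; this is not justified and is not automatic ($\n_1$ is only a complementary subalgebra, not canonically the orthogonal complement of the ideal $\n_2$ in $\n$). Even granting it, your argument yields commutation of the operators $\ad(A)|_{\n_2}$ and $\rho(X)|_{\n_2}$, which is weaker than the assertion $[\g_1,\af_2]=0$ in $\g_1\ltimes\s_2$ that (ii) claims and that the later sections use. The paper instead writes $\af'_2\ni W'_2=W_2+X$ with $W_2$ in a complement $\cf$ of $\n_2$ in $\s_2$ satisfying $[\g_1,\cf]=0$, uses Lemma~\ref{lem.ady} to conjugate $W'_2$ to $W_2$ by $\Ad(\exp Y)$ with $Y$ in the zero-eigenspace of $\ad(\af_1)|_\n$, and then deduces $[\af_2,\g_1]\subset(\g_1+\af_2)\cap\n_2=0$ from an eigenspace argument for $\ad(W_2)$. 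Second, you explicitly leave the ``correction term'' $\rho_*(W_1)=\ad(W_1)|_{\n_2}$ unaccounted for; the paper disposes of it by noting that each eigenspace of $\ad(W_2)|_{\n_2}$ (respectively of $D\in\dercntwo$) is $\rho(\g_1)$-invariant and that finite-dimensional representations of a semisimple algebra are traceless, which simultaneously yields (iv) and the vanishing of $\mytrace(\varphi_1|_{\n_2}D)$. As written, the proposal identifies the right targets but does not close the two steps on which the proposition actually turns.
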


\begin{proof}[Proof of Proposition~\ref{prop.a2}]

 We view $\s_1$ as a subalgebra of $\g_1$.  By the second hypothesis, the representation $X\mapsto \ad(X)|_{\s_2}$ of $\s_1$ extends to a representation $\rho: \g_1\to End(\s_2)$.  Thus we may view $\s$ as a subalgebra of the semi-driect sum $\g_1\ltimes \s_2$.  Since $\rho(\s_1)$ is an Iwasawa subalgebra of $\rho(\g_1)$, we see that $\rho(\af_1)$ consists of fully reducible elements.  Since also $\af_1$ acts fully reducibly on $\n_1$, it follows that $\ad_\s(\af_1)$ consists of fully reducible elements.  Let $\af'$ be a maximal fully $\ad$-reducible abelian subalgebra of $\s$ containing $\af_1$, and let $\n=\nilrad(\s)$.  By Remark~\ref{rem.maxred}, $\s=\af'+\n$ is a standard decomposition.

Define $\af'_2 := (\mathfrak n_1 + \mathfrak s_2) \cap \mathfrak a'$.  Since $\s=\af_1+\n_1+\s_2$ (vector space direct sum) and $\af_1\subset \af'$, we have 
$\af'=\af_1+\af'_2$.   From the facts that  $\mathfrak a_1$ commutes with $\af ' $, normalizes each of $\n_1$ and $\s_2$, and contains an element which is non-singular on $\mathfrak n_1$, we see that $\af'_2 \subset \s_2$.   Thus $\s_2=\af'_2+\n_2$.  

 By Propositions~\ref{prop.heb} and \ref{prop.nik}, there exists an element $W'\in\af'$ such that $(\ad(W')|_{\n})^\R$ is a pre-Einstein derivation of $\n$.   Write $W'=W_1'+W_2'$ with $W_i'\in\af'_i$.   
Since $[\g_1,\s_2] \subset \n_2$, there exists a subspace $\cf$ of $\s_2$ complementary to $\n_2$ such that $[\g_1,\cf]=0$.  Write $W'_2=W_2+X$ with $W_2\in\cf$ and $X\in \n_2$.  Since both $\cf$ and $\af'_2$ commute with $\af_1$, so does the element $X$; i.e., $X$ lies in the zero-eigenspace $\q$ of $\ad(\af_1)|_{\n}$.  The eigenspace $\q$ is a subalgebra of $\n_2$.   Since $\af'_2$ commutes with $\af_1$, $\ad(\af'_2)$ leaves $\q$ invariant.  Moreover, since $[W',\n]=\n$ and $[W_1',\q]=0$, we must have $[W_2',\q]=\q$.   Thus $\ad(W_2')|_{\q}$ is a non-singular semisimple derivation of the nilpotent Lie algebra $\q$.  By Lemma~\ref{lem.ady} there exists an element $Y\in \q$ such that $\Ad(\exp(Y))(W_2')=W_2'-X=W_2$.   Set $W_1=W_1'$ and $W=W_1+W_2$.    Observe, $W = \Ad(\exp(Y))(W')$ is a pre-Einstein derivation.  
Let $\af_2=\Ad(\exp(Y))(\af'_2) \subset \s_2$ and set $\af=\af_1+\af_2 $. 

We need to show that $\af$ and $W$ satisfy conditions (i)--(iv).   Noting that $\af=\Ad(\exp(Y))(\af')$, we see that $\af+\n$ is again a standard decomposition of $\s$, and thus we have (i).

We next prove (iv).   Let $\lambda$ be an eigenvalue of $\ad(W_2)|_{\n_2}$ and $V_\lambda$ the corresponding eigenspace.   Then $V_\lambda$ is $\rho(\g_1)$-invariant since $W_2$ commutes with $\rho(\g_1)$.   Since $\g_1$ is semisimple, we must have 
$\mytrace(\ad(X)|_{V_\lambda})=0$ for all $X\in\g_1$ and thus 
	$$\mytrace(\ad(W)_{V_\lambda}=\mytrace(\ad(W_2)_{V_\lambda})=\lambda\,\dim(V_\lambda).$$  Since $\varphi=\ad(W)|_{\n}^\R$ is the pre-Einstein derivation of the Einstein nilradical $\n$, all eigenvalues of $\ad(W)_{\n_2}$ have positive real part by Proposition~\ref{prop.heb} and thus we must have $\Real(\lambda)>0$.   This proves (iv).

We next prove that $\af_2$ satisfies (ii).  Since $W_2\in\cf$, we have $[W_2,\g_1]=0$.   On the other hand, $[W_2,\n_2]=\n_2$ by (iv).  Hence $\g_1+\af_2$ is the zero-eigenspace of $\ad_{\g_1\ltimes\s_2}(W_2)$.   All elements of $ad_{\g_1\ltimes \s_2}(\af_2)$ commute with $\ad_{\g_1\ltimes\s_2}(W_2)$ and thus leave $\g_1+\af_2$ invariant.   Since $[\g_1,\s_2]\subset \n_2$, we thus have
$[\af_2,\g_1]\subset (\g_1+\af_2)\cap \n_2=(0)$, proving (ii).

(iii) We are left to prove that $\varphi_2$, as defined in (iii), is the pre-Einstein derivation of $\n_2$; i.e., that 
\begin{equation}\label{preEcond} \mytrace(\varphi_2 D)=\mytrace(D)\end{equation} for all $D\in \Der(\n_2)$.   

We use the shorthand notation $\Der$ for $\Der(\n_2)$.  As $\Der$ is the Lie algebra of the algebraic group $\Aut(\n_2)$, it has a  Levi decomposition 
	$$\Der = \Der_1 + \Der_2,$$ 
where $\Der_1$ is a maximal semisimple subalgebra and  the radical $\Der_2$ splits as a semi-direct sum 
	$$\Der_2=\Der^{\abc}+\nilrad(\Der)$$
with $\Der^{\abc}$ an abelian subalgebra commuting with $\Der_1$.   The subalgebra $\Der_1 + \Der^{\abc}$ is a maximal fully reducible subalgebra of $\Der$ (i.e., maximal among all subalgebras of $\Der$ that act fully reducibly on $\n_2$), and  $\Der^{\abc}$  consists of semisimple derivations of $\n_2$.     The elements of $\nilrad(\Der)$ are nilpotent derivations.  By Mostow \cite[Theorem 4.1]{Mostow:FullyReducibleSubgrpsOfAlgGrps}, the maximal fully reducible subalgebra of $\Der$ is unique up to conjugation.  Every pre-Einstein derivation $\psi$ of $\n_2$ lies in the center of a maximal fully reducible subalgebra (see proof of \cite[Theorem 1.1(a)]{Nikolayevsky:EinsteinSolvmanifoldsandPreEinsteinDerivation}) and hence is conjugate to an element of $\Der^{\abc}$.

To prove that $\varphi_2$ is the pre-Einstein derivation, we are required to show that Equation~\ref{preEcond} holds for all $D\in\Der$.  However, the next lemma (with $\varphi_2$ playing the role of $\sigma$) shows that it in fact suffices to verify Equation~\ref{preEcond} only for a select subset of derivations $D$.   The lemma is motivated by the proof of Theorem 1 in \cite{Nikolayevsky:EinsteinSolvmanifoldsandPreEinsteinDerivation}.

\begin{lemma}\label{lemma:reducing pre-Einstein eqn}  Let $\sigma$ be a semisimple derivation of $\n_2$ with real eigenvalues.  Let $\mathfrak r$ be any fixed choice of subalgebra of $\Der_1 + \Der^{ab}$ which contains $\sigma$ and  $\Der^{ab}$.  If $ \mytrace(\sigma D)=\mytrace(D) $ holds for all $D \in \mathfrak r$ (cf.\ Eqn \ref{preEcond}), then $\sigma$ is a pre-Einstein derivation of $\n_2$.

\end{lemma}

\begin{remark} Although the pre-Einstein derivation is contained in $\Der^{ab}$, we are careful to note that, a priori, our subalgebra $\mathfrak r$ must explicitly contain  both $\sigma$ and $\Der^{ab}$. 
\end{remark}

\begin{proof}  Let $\psi$ denote the unique pre-Einstein derivation contained in $\Der^{ab}$.  As such, we know that
	$$\mytrace (\psi D) = \mytrace D \quad \mbox{ for all } D\in Der.$$
By hypothesis, we have that $\psi, \sigma \in \mathfrak r$ and that $\sigma$ satisfies Eqn \ref{preEcond} for $D\in\mathfrak r$.  This yields
	$$\mytrace (\psi - \sigma)^2 = \mytrace (\psi (\psi - \sigma)) - \mytrace (\sigma (\psi-\sigma)) = 0.$$

Now recall that $\psi$ is diagonalizable over $\mathbb R$ as it is a pre-Einstein derivation.  Together with the fact that $\sigma$ is diagonalizable over $\mathbb R$ (being positive definite) and that $\psi$ and $\sigma$ commute, we see that $\psi - \sigma$ is diagonalizable over $\mathbb R$.  

Finally, $\mytrace (\psi - \sigma)^2 = 0$ implies $\psi = \varphi_2$, thence $\sigma$ is a pre-Einstein derivation and so Eqn \ref{preEcond} holds for all derivations.

\end{proof}

To use the lemma above, we carefully select a subalgebra of $\Der_1 + \Der^{ab}$ which satisfies the hypotheses.  Given any subalgebra $\bfrak$ of $\Der(\n_2)$, denote by $\Der(\n_2)^\bfrak$ the subalgebra of all derivations that commute with $\bfrak$. 

\begin{lemma}\label{lem.extend}   Let 
$\mathfrak{e}=\{D\in \Der(\n)^\af: D|_{\n_1}=0\}.$  Then:
\begin{enumerate}
\item $\mathfrak{e}$ is an ideal in $\Der(\n)^\af$.
\item $\dercntwo=\{D|_{\n_2}: D\in \mathfrak{e}\}$.  (Here we are identifying $\af_2$ with $\ad(\af_2)|_{\n_2}$.)   
\item $\dercntwo=\Der(\n_2)^{\mathfrak{a}_2 + \rho(\mathfrak{g}_{1})}.$
\item $\mathfrak{e}$ acts fully reducibly on $\n$ and $\dercntwo$ acts fully reducibly on $\n_2$.
\item Equation~\ref{preEcond} holds for every $D\in \dercntwo$. 

\end{enumerate}

\end{lemma}

\begin{proof}
(i) Since $\af_2\subset \af$ and $\n_1$ is the zero-eigenspace of $\af_2$ while $\n_2=[\af_2,\n_2]$, all elements of $\Der(\n)^\af$ leave each of $\n_1$ and $\n_2$ invariant.  Thus $\mathfrak{e}$ is an ideal in $\Der(\n)^\af$.  

(ii) is elementary and (iii) follows from Lemma~\ref{lem.jab}.

(iv)  By Proposition~\ref{prop.heb}, $\Der(\n)^\af$ acts fully reducibly on $\n$ and thus also on $\n_2$.  Since $\mathfrak{e}$ is an ideal in $\Der(\n)^\af$, it also acts fully reducibly.   (See \cite{Mostow:FullyReducibleSubgrpsOfAlgGrps}, p. 208.)  The second statement in (iv) follows from (ii).

(v)  Let $D\in\dercntwo$.  By (ii), $D$ extends to a derivation $\tilde{D}\in \Der(\n)$ satisfying $\tilde{D}|_{\n_1}=0.$  Since $\varphi$ is the pre-Einstein derivation of $\n$, we have  (writing $\varphi_1=(\ad(W_1)|_{\n})^\R$)
$$\mytrace(D) = \mytrace(\tilde{D}) = \mytrace(\varphi \tilde{D}) = \mytrace( \varphi_1 \tilde{D}) + \mytrace(\varphi_2 D)$$
$$= \mytrace(\varphi_1|_{\mathfrak n_2}D) + \mytrace(\varphi_2D)$$ 
Thus we need only show that $\mytrace(\varphi_1|_{\mathfrak{n}_2}D) =0$.   By (iii), each eigenspace $V$ of $D$ in $\n_2$ is preserved by $\rho(\g_{1})$ and we have $\mytrace(\rho(X)|_V)=0$ for all $X\in\g_{1}$,  since every finite-dimensional representation of a semisimple Lie algebra is traceless.  This holds in particular for $X=W_1$, and hence we have $\mytrace(\varphi_1|_V)=0$ on each such eigenspace.   Thus  $\mytrace(\varphi_1|_{\mathfrak n_2}D) =0$ as was to be shown.  This completes the proof of the lemma.

\end{proof}

As $\dercntwo$, $\mathfrak a_2$, and $\rho(\mathfrak g_1)$ all  act  fully reducibly on $\n_2$ and commute, the subalgebra $\dercntwo + \mathfrak a_2  + \rho(\mathfrak g_1)$ acts fully reducibly.  Thus, there is some maximal reductive subalgebra $\Der_1 + \Der^{ab}$  which contains them all.  Clearly, $\Der^{ab} \subset \dercntwo$.  Since $\varphi_2 \in \dercntwo$, we may apply the lemmas above to see that, in fact, $\varphi_2$ is a pre-Einstein derivation of $\n_2$.  This completes the proof of Proposition \ref{prop.a2}.

\end{proof}

\begin{cor}\label{cor.final} To prove Lemma~\ref{k}, it suffices to show that $N_2$ admits a nilsoliton metric.
\end{cor}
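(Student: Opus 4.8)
\smallskip

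\noindent\textbf{Proof proposal for Corollary~\ref{cor.final}.}
The plan is to deduce the corollary by feeding the data assembled in Proposition~\ref{prop.a2} into Corollary~\ref{cor.nik}. The conclusion of Lemma~\ref{k} is precisely that $\s_2$ is of Einstein type, so it suffices to manufacture, out of a nilsoliton metric on $N_2$, a left-invariant Einstein metric of negative Ricci curvature on $S_2$.

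First I would record that $\s_2=\af_2\ltimes\n_2$ as Lie algebras, where $\af_2$ is the abelian complement of $\n_2=\nilrad(\s_2)$ produced by Proposition~\ref{prop.a2}: indeed $\af_2$ is abelian, $\n_2$ is an ideal of $\s_2$, and $[\af_2,\n_2]\subseteq\n_2$. Next I would check that $X\mapsto\ad(X)|_{\n_2}$ embeds $\af_2$ as an abelian subalgebra $\af_2':=\ad(\af_2)|_{\n_2}$ of $\Der(\n_2)$, every nonzero element of which is semisimple with non-zero real part. Semisimplicity is automatic: by Proposition~\ref{prop.a2}(i), $\s=\af+\n$ with $\af=\af_1+\af_2$ is a standard decomposition, so $\ad_\s(\af)$ — and hence $\ad(\af_2)|_\n$ and its restriction to $\n_2$ — consists of semisimple operators. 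For the real parts I would use that $\af_2$ commutes with $\g_1\supseteq\n_1$ (Proposition~\ref{prop.a2}(ii)), so $\ad(X)|_{\n_1}=0$ for every $X\in\af_2$; since $\n=\n_1+\n_2$ and $\ad(X)$ preserves $\n_2$, the nonzero eigenvalues of $\ad(X)|_\n$ and of $\ad(X)|_{\n_2}$ agree. Thus $\ad(X)|_{\n_2}^\R=0$ forces $\ad(X)|_\n^\R=0$, whence $X=0$ by Proposition~\ref{prop.heb}(ii), applied to the standard decomposition $\s=\af+\n$ (which is orthogonal for some Einstein metric by Remark~\ref{rem.maxred}). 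In particular this embedding is injective, so $\s_2\cong\af_2'\ltimes\n_2$.

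With this in hand I would invoke Corollary~\ref{cor.nik} for the simply-connected nilpotent group $N_2$: by Proposition~\ref{prop.a2}(iii) we may take its pre-Einstein derivation to be $\varphi_2=\ad(W_2)|_{\n_2}^\R$ with $W_2\in\af_2$, so that $\varphi_2=A^\R$ for $A=\ad(W_2)|_{\n_2}\in\af_2'$. Assuming $N_2$ admits a nilsoliton metric, Corollary~\ref{cor.nik} then gives that $\af_2'\ltimes\n_2\cong\s_2$ is of Einstein type, which is exactly the assertion of Lemma~\ref{k}.

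The substantive work has already been done in Proposition~\ref{prop.a2} — the construction of $\af_2$ and the proof that $\varphi_2$ is the pre-Einstein derivation of $\n_2$ — so the only points needing care here are the verification of the hypotheses of Corollary~\ref{cor.nik} (in particular the non-vanishing of the real part of $\ad(X)|_{\n_2}$ for $0\neq X\in\af_2$, where Proposition~\ref{prop.heb}(ii) combined with $\ad(\af_2)|_{\n_1}=0$ is used) and the routine identification $\s_2\cong\af_2'\ltimes\n_2$. I do not expect either to present a real obstacle; the remaining content of the Key Lemma — that $N_2$ genuinely carries a nilsoliton — is what the subsequent sections establish via Nikolayevsky's closed-orbit criterion.
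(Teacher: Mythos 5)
Your argument is correct and is exactly the paper's route: the paper disposes of this corollary in one line, citing Proposition~\ref{prop.a2} together with Corollary~\ref{cor.nik}, and your write-up simply fills in the routine verifications (semisimplicity from the standard decomposition, non-vanishing of real parts via $\ad(\af_2)|_{\n_1}=0$ and Proposition~\ref{prop.heb}(ii), and the identification $\s_2\cong\af_2\ltimes\n_2$). No gaps.
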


Corollary~\ref{cor.final} follows from Proposition~\ref{prop.a2} and Corollary~\ref{cor.nik}.

We will carry out the proof of the existence of a nilsoliton metric on $N_2$ in the next subsection.

\subsection{Existence of a nilsoliton metric on $N_2$.}\label{subsec.nilsol}

By Proposition~\ref{prop.a2}, we know that $\s_2$ can be written as $\af_2+\n_2$ where $\mathfrak a_2$ is abelian and $[\g_1,\af_2]=0$.   Moreover, there exists an element $W=W_1+W_2\in \af$ with $W_i\in \af_i$, $i=1,2$ such that $\varphi:=\ad(W)|_{\n}^\R$  is a pre-Einstein derivation of $\n$ and $\varphi_2:=\ad(W_2)|_{\n_2}^\R$ is a pre-Einstein derivation of $\n_2$. 

\begin{hypoth}\label{hyp} In addition to the hypotheses of Lemma~\ref{k} and our first simplification~\ref{first.simp}, we claim that it suffices to prove the existence of a nilsoliton metric on $N_2$ under the following additional hypotheses on $S$:

\begin{enumerate} 

	\item All eigenvalues of $\ad(W_2)|_{\n_2}$ are real;  equivalently, $\varphi_2:=\ad(W_2)|_{\n_2}$.

	\item $\af_2$ is one-dimensional; equivalently, $\af_2=\R W_2$.   
\end{enumerate}

\end{hypoth}

Indeed, let $\s'_2$ be the semi-direct sum of $\R \varphi_2\ltimes\n_2$.  By Remark~\ref{nota.ss} and Proposition~\ref{prop.a2}, $\varphi_2$ commutes with the action $\rho$ of $\g_1$ on $\n_2$.  In particular, $\s'=\s_1\ltimes \s'_2$ is a well-defined solvable Lie algebra and, by Corollary~\ref{cor.nik}, 
  the corresponding simply-connected Lie group $S'$ is of Einstein type.  We can again use Remark  \ref{nota.ss} to form the semi-direct product $G_{1}\ltimes S'_2$.   Since the nilradical $N_2$ of $S'_2$ coincides with that of $S_2$ and since $S'$ satisfies the hypotheses of Lemma~\ref{k}, the claim follows.

 \begin{notarem}\label{note.nik2} In the notation of Proposition~\ref{prop.a2}, we will identify $W_2$ with $\varphi_2$ and thus write $\varphi_2\in \af_2$.   We will similarly identify the pre-Einstein derivation $\varphi$ of $\s$ with $W=W_1+\varphi_2$.  (Note that $W_1=W_1^\R$ since $W_1\in\af_1$ and $\s_1=\af_1+\n_1$ is an Iwasawa algebra.)
 
 (ii) We use the identifications in~\ref{note.nik}.  Let $n=\dim(\n)$ and $n_i=\dim(\n_i)$.   We identify $\R^n$ with $\R^{n_1}\times\R^{n_2}$ and let 
 $$i:GL(\R^{n_2}) \to GL(\R^n)$$
 be the associated embedding.  Thus 
 $$i(\alpha)=\begin{bmatrix}I&0\\0&\alpha\end{bmatrix}$$
 and $$i_*(X)=\begin{bmatrix}0&0\\0&X\end{bmatrix}$$
 for $\alpha\in GL(n_2,\R)$ and $X\in \gl(n_2,\R)$.
 In the notation of Proposition~\ref{prop.nik2}, we write $\mu$ for the Lie bracket of $\n=\n_1+\n_2$ and $\mu_i$ for the Lie bracket of $\n_i$ and view $\n$, respectively $\n_i$, as the vector space $\R^n$, respectively $\R^{n_i}$,  with bracket $\mu$, respectively $\mu_i$.  Under the identification of $\R^n$ with $\R^{n_1}\times \R^{n_2}$, we may write 
  \begin{equation}\label{eq.mu}\mu = \mu_1 + \mu_{12} + \mu_2,\end{equation}
where $\mu_{12}$ denotes the adjoint  action of $\mathfrak n_1$ on the ideal $\mathfrak n_2$.  Note that
$$\mu_{12}(X,Y)=\rho(X)Y \mbox{\,for all\,}X\in \n_1,\,Y\in\n_2$$
where $\rho:\g_1\to Der(\s_2)$ is the differential of the representation of $G_1$ in Lemma~\ref{k}.
   \end{notarem} 
   
 Let $G_\varphi<SL(n,\R)$ and $G_{\varphi_2}<SL(n_2,\R)$ be the pre-algebraic groups defined in Proposition~\ref{prop.nik2}.  To prove the Lemma~\ref{k}, we need to show that $ G_{\varphi_2}\cdot\mu_2$ is closed.   We will use Equation~\ref{eq.mu} and the fact that $G_\varphi\cdot\mu$ is closed.
 \begin{lemma}\label{lem.rho}\text{}  
 \begin{enumerate}
 \item $\rho(\g_1)<\g_{\varphi_2}$.  
 \item Let $\cf_\rho=Z_{\g_{\varphi_2}}(\rho(\g_1))$ be the centralizer of $\rho(\g_1)$ in $\g_{\varphi_2}$ and let $C_\rho$ be the corresponding connected subgroup of $G_{\varphi_2}$.  Then $i(C_\rho)<G_{\varphi}.$
 \item For $\alpha\in C_\rho$, we have 
 $$i(\alpha)\cdot\mu=\mu_1+\mu_{12}+\alpha\cdot\mu_{22}.$$
 
 \end{enumerate}
 \end{lemma}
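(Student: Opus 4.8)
The plan is to verify (i)--(iii) directly against the explicit descriptions of $\g_{\varphi_2}$ and $\g_\varphi$ in Proposition~\ref{prop.nik2}, using throughout the structural facts of Proposition~\ref{prop.a2}: $\af_2$ commutes with $\g_1$, $\n_2$ is an ideal of $\s$, and the pre-Einstein derivation is $\varphi=\ad_\n(W)$ with $W=W_1+\varphi_2$, $W_1\in\af_1$, and $\varphi_2=\ad(W_2)|_{\n_2}$ positive definite (all eigenvalues real, by~\ref{hyp}).

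\emph{Part (i).} Write $t_2(A)=\mytrace(A\varphi_2)$, so that $\g_{\varphi_2}=\mathfrak{sl}_{n_2}(\R)\cap\mathfrak z(\varphi_2)\cap\ker t_2$. For $X\in\g_1$ one checks: $\mytrace(\rho(X))=0$, since every finite-dimensional representation of the semisimple algebra $\g_1$ is traceless; $[\rho(X),\varphi_2]=0$, because $W_2\in\af_2$ commutes with $\g_1$ in $\g_1\ltimes\s_2$, so $\ad(W_2)|_{\n_2}=\varphi_2$ commutes with $\ad(X)|_{\n_2}=\rho(X)$; and $\mytrace(\rho(X)\varphi_2)=0$, because $\varphi_2$ is real-semisimple and commutes with $\rho(X)$, hence each eigenspace $V_\lambda$ of $\varphi_2$ in $\n_2$ is a $\rho(\g_1)$-submodule and $\mytrace(\rho(X)\varphi_2)=\sum_\lambda\lambda\,\mytrace(\rho(X)|_{V_\lambda})=0$. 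Thus $\rho(\g_1)\subset\g_{\varphi_2}$.

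\emph{Part (ii).} The preparatory step is to record the block form of $\varphi$ with respect to $\n=\n_1\oplus\n_2$: by Proposition~\ref{prop.a2}, $\ad_\n(W_1)$ preserves both $\n_1$ and $\n_2$ while $\ad_\n(W_2)$ annihilates $\n_1$, so $\varphi$ is block-diagonal with $\varphi|_{\n_1}=\ad_{\n_1}(W_1)$ and $\varphi|_{\n_2}=\rho(W_1)+\varphi_2$. For $X\in\cf_\rho$ the matrix $i_*(X)$ has blocks $0$ and $X$, so $\mytrace(i_*(X))=\mytrace(X)=0$; $[i_*(X),\varphi]$ reduces to $[X,\rho(W_1)+\varphi_2]$, which vanishes because $X$ centralizes $\varphi_2$ (as $X\in\g_{\varphi_2}$) and centralizes $\rho(\g_1)\ni\rho(W_1)$; and $\mytrace(i_*(X)\varphi)=\mytrace(X\varphi_2)+\mytrace(X\rho(W_1))=t_2(X)+\mytrace(X\rho(W_1))$, where the first term is $0$ since $X\in\ker t_2$, and the second is $0$ by writing $W_1\in\g_1=[\g_1,\g_1]$ as $\sum_k[Y_k,Z_k]$ and using $\mytrace(X[\rho(Y_k),\rho(Z_k)])=\mytrace([X,\rho(Y_k)]\rho(Z_k))=0$. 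Hence $i_*(\cf_\rho)\subset\g_\varphi$, and since $i$ is a homomorphism and $C_\rho$, $G_\varphi$ are connected, $i(C_\rho)<G_\varphi$. I expect this part to be the main (though still routine) point: it is precisely here that one uses $X\in\cf_\rho$ rather than merely $X\in\g_{\varphi_2}$ --- through $[X,\rho(W_1)]=0$ and $\mytrace(X\rho(W_1))=0$ --- and one must keep track of the block decomposition of $\varphi$.

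\emph{Part (iii).} Apply the $GL_n(\R)$-action $A.\nu(x,y)=A\,\nu(A^{-1}x,A^{-1}y)$ to $\mu=\mu_1+\mu_{12}+\mu_2$ of Equation~\ref{eq.mu} (I read $\mu_{22}$ in the statement as $\mu_2$, the component of $\mu$ on $\n_2\times\n_2$). Since $i(\alpha)$ is the identity on $\n_1$, the $\n_1\times\n_1$ component of $i(\alpha)\cdot\mu$ is still $\mu_1$. On $\n_1\times\n_2$, using $\mu_{12}(x,y)=\rho(x)y$ and $\alpha\rho(x)\alpha^{-1}=\rho(x)$ --- valid because $\alpha\in C_\rho$ commutes with $\rho(\g_1)\supset\rho(\n_1)$, as $\cf_\rho$ centralizes $\rho(\g_1)$ and $C_\rho$ is connected --- this component is again $\mu_{12}$. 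On $\n_2\times\n_2$ only $\alpha$ acts, contributing $\alpha\cdot\mu_2$. Summing the three pieces gives $i(\alpha)\cdot\mu=\mu_1+\mu_{12}+\alpha\cdot\mu_2$, as claimed.
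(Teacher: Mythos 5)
Your proof is correct and follows essentially the same route as the paper: verify the three defining conditions of $\g_{\varphi_2}$ and $\g_\varphi$ from Proposition~\ref{prop.nik2} using the block decomposition $\varphi|_{\n_1}=\ad(W_1)|_{\n_1}$, $\varphi|_{\n_2}=\rho(W_1)+\varphi_2$ and the fact that $\cf_\rho$ centralizes $\rho(\g_1)$. Your justification of $\mytrace(X\rho(W_1))=0$ via $W_1\in[\g_1,\g_1]$ and cyclicity of the trace is in fact cleaner and more explicit than the paper's parenthetical appeal to eigenspaces of $\varphi_2$.
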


 \begin{proof} (i) is immediate since $\rho(\g_1)$ consists of derivations of trace zero that commute with $\varphi_2$, as noted immediately following the statement of~\ref{hyp}.   For (ii), let $X\in\cf_\rho$.  Recalling that $\varphi=W_1+\varphi_2$ with $W_1\in\af_1$, we see that 
 $$[\varphi,i_*X]_{\gl(n,\R)}=i_*([\rho(W_1),X]_{\gl(n_2,\R)}+ [\varphi_2,X]_{\gl(n_2,\R)})=0.$$
 (Here $[\cdot\,\cdot]_{\gl(m,\R)}$ denotes the Lie bracket of $\gl(m,\R)$.)  Thus $i_*(X)\in \mathfrak z(\varphi)$.   Moreover $$\mytrace(\varphi \,i_*(X))=\mytrace(\varphi_2 X)=0.$$ 
 (The first equality follows from the fact that each element of $\rho(\g_1)$--in particular, $\rho(W_1)$-- restricts to a traceless representation on each eigenspace of $\varphi_2$, and the second equality is immediate from the definition of $\g_{\varphi_2}$ in Proposition~\ref{prop.nik2}.)  Hence $i_*X\in \g_\varphi$.  Finally, (iii) follows from Equation~\ref{eq.mu} and the fact that $\alpha$ commutes with $\rho(\g_1)$.
  
  \end{proof}

As noted in~\ref{note.nik}(i), the stabilizer of $\mu_2$ in $G_{\varphi_2}$ has Lie algebra $\Der(\n_2)\cap \mathfrak{sl}(n_2,\R)$.   Let $H$ be any connected fully reducible subgroup of the stabilizer and let $C:=Z_{G_{\varphi_2}}(H)_0$.  By Lemma~\ref{lemma: luna on centralizer of a stab grp having closed orbit}, $G_{\varphi_2}\cdot\mu_2$ is closed if and only if $C\cdot\mu_2$ is closed.   If, moreover, we choose $H$ so that its Lie algebra contains $\rho(\g_1)$, then by Lemma~\ref{lem.rho}, the latter condition is equivalent to $i(C)\cdot\mu$ being closed.   In the following corollary, we make a choice of $H$.

  \begin{cor}\label{cor.plan}   Let $\mathfrak{d}=\dercntwo\cap\mathfrak{sl}(n_2,\R)$ and let $\h=\rho(\g_1)+\mathfrak{d}<\Der(\n_2)$.  Let $\mathfrak{c}=Z_{\g_{\varphi_2}}(\h)$,  and let $C$ be the corresponding connected subgroup of $G_{\varphi_2}$.  Then the following are equivalent:
\begin{itemize}
 \item $N_2$ admits a nilsoliton metric; 
 \item the orbit $C\cdot\mu_2$ is closed;
 \item the orbit $i(C)\cdot\mu$ is closed.
\end{itemize}
  \end{cor}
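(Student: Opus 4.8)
The plan is to prove the three displayed statements equivalent by chaining together Nikolayevsky's criterion (Proposition~\ref{prop.nik2}), the Luna-type result Lemma~\ref{lemma: luna on centralizer of a stab grp having closed orbit}, and the explicit description of the $G_\varphi$-action in Lemma~\ref{lem.rho}. The first equivalence is free: by Proposition~\ref{prop.a2}(iii), $\varphi_2$ is a pre-Einstein derivation of $\n_2$, so Proposition~\ref{prop.nik2} says that $N_2$ admits a nilsoliton metric if and only if the orbit $G_{\varphi_2}\cdot\mu_2$ is closed in $V_2:=\wedge^2(\R^{n_2})^*\otimes\R^{n_2}$.

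\textbf{Step 2: $G_{\varphi_2}\cdot\mu_2$ closed $\iff$ $C\cdot\mu_2$ closed.} Here I would first check that $\h=\rho(\g_1)+\mathfrak{d}$ is a subalgebra of $\g_{\varphi_2}$ acting fully reducibly on $\n_2$. It is a subalgebra because $\rho(\g_1)$ and $\mathfrak{d}$ are subalgebras and $\mathfrak{d}\subset\dercntwo=\Der(\n_2)^{\af_2+\rho(\g_1)}$ commutes with $\rho(\g_1)$. The inclusion $\h\subset\g_{\varphi_2}$ reduces to three checks for each $D\in\h$: it is traceless (for $\rho(\g_1)$ because $\g_1$ is semisimple, for $\mathfrak{d}$ by definition), it commutes with $\varphi_2$ (for $\rho(\g_1)$ because $\varphi_2$ commutes with $\rho$, for $\mathfrak{d}$ because $\mathfrak{d}$ commutes with $\af_2$), and $\mytrace(D\varphi_2)=0$ (for $D\in\rho(\g_1)$ this is the argument in Lemma~\ref{lem.rho}(i); for $D\in\mathfrak{d}$ it is $\mytrace(\varphi_2 D)=\mytrace(D)=0$ by Lemma~\ref{lem.extend}(v)). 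For full reducibility, observe that $\mathfrak{d}$ is an ideal in $\dercntwo$ (commutators are automatically traceless), hence $\h$ is an ideal in the subalgebra $\dercntwo+\af_2+\rho(\g_1)$, which acts fully reducibly on $\n_2$ by the argument in the proof of Proposition~\ref{prop.a2}; an ideal of a fully-reducibly-acting subalgebra acts fully reducibly. Let $H$ be the connected subgroup of $G_{\varphi_2}$ with Lie algebra $\h$ (well-defined since $\h\subset\g_{\varphi_2}$); then $H$ is connected and fully reducible, and it fixes $\mu_2$ because $\h$ consists of derivations of $\n_2$. Since $G_{\varphi_2}$ is fully reducible and pre-algebraic (Proposition~\ref{prop.nik2} and~\ref{note.nik}(ii)), Lemma~\ref{lemma: luna on centralizer of a stab grp having closed orbit} applies and gives that $G_{\varphi_2}\cdot\mu_2$ is closed if and only if $Z_{G_{\varphi_2}}(H)_0\cdot\mu_2$ is closed; and $Z_{G_{\varphi_2}}(H)_0=C$ because $H$ is connected and $\mathfrak{c}=Z_{\g_{\varphi_2}}(\h)$.

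\textbf{Step 3: $C\cdot\mu_2$ closed $\iff$ $i(C)\cdot\mu$ closed.} Since $\rho(\g_1)\subset\h$ we have $\mathfrak{c}=Z_{\g_{\varphi_2}}(\h)\subset Z_{\g_{\varphi_2}}(\rho(\g_1))=\cf_\rho$, so $C\subset C_\rho$ and hence $i(C)<G_\varphi$ by Lemma~\ref{lem.rho}(ii). By Lemma~\ref{lem.rho}(iii), $i(\alpha)\cdot\mu=\mu_1+\mu_{12}+\alpha\cdot\mu_2$ for every $\alpha\in C$, so $i(C)\cdot\mu=(\mu_1+\mu_{12})+(C\cdot\mu_2)$ inside $V$, with $C\cdot\mu_2\subset V_2$ viewed as a subspace of $V$. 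Translation by $\mu_1+\mu_{12}$ is a homeomorphism of $V$ carrying $i(C)\cdot\mu$ onto $C\cdot\mu_2$, and $V_2$ is a closed linear subspace of $V$; hence $i(C)\cdot\mu$ is closed in $V$ if and only if $C\cdot\mu_2$ is closed in $V_2$. Chaining the three equivalences yields the corollary.

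\textbf{Main obstacle.} I expect the middle step to be the delicate one: one must arrange the hypotheses of Lemma~\ref{lemma: luna on centralizer of a stab grp having closed orbit} precisely — in particular the verification that $\h\subset\g_{\varphi_2}$ (the trace conditions $\mytrace(D\varphi_2)=0$), that $\h$ acts fully reducibly on $\n_2$, and that the group-level identifications $H<G_{\varphi_2}$ and $C=Z_{G_{\varphi_2}}(H)_0$ are genuinely consistent with the Lie-algebra data. The first and third equivalences are essentially bookkeeping on top of results already in hand.
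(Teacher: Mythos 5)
Your proposal is correct and follows essentially the same route as the paper: Nikolayevsky's criterion (Proposition~\ref{prop.nik2}) for the first equivalence, Lemma~\ref{lemma: luna on centralizer of a stab grp having closed orbit} applied to the fully reducible stabilizing subgroup $H$ with Lie algebra $\h$ for the second, and Lemma~\ref{lem.rho}(iii) together with the translation-by-$(\mu_1+\mu_{12})$ observation for the third. The extra verifications you supply (that $\h\subset\g_{\varphi_2}$, that it acts fully reducibly, and that $C=Z_{G_{\varphi_2}}(H)_0$) are exactly the points the paper's terser proof delegates to Lemmas~\ref{lem.extend} and~\ref{lem.rho}, and they check out.
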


  \begin{proof} By Lemma~\ref{lem.extend}, $\mathfrak{d}$ is fully reducible and it commutes with the fully reducible algebra $\rho(\mathfrak g_1)$; thus $\h$ is fully reducible.   Hence the equivalence of the first two conditions follows from Lemma~\ref{lemma: luna on centralizer of a stab grp having closed orbit} and Proposition~\ref{prop.nik2}.  The equivalence of the second and third conditions follows from Lemma~\ref{lem.rho}(iii) since $C<C_\rho$.
  
  \end{proof}

To complete the proof of the Key Lemma, we need to show that $i(C)\cdot\mu$ is closed.  We exploit the notion of `inheritance of closed orbits', see Section \ref{sec: technical lemmas on GIT}.

\begin{lemma}\label{thm:refinement of Niko}  Let $\g_\varphi^{\mathfrak a}$ be the subalgebra of $\g_\varphi$ consisting of all elements that commute with $\ad_\s(\af)|_\n$, and let $G_\varphi ^\mathfrak a$ be the corresponding connected subgroup of $G_\varphi$.   Then $G_\varphi ^\mathfrak a$ is a pre-algebraic fully reducible subgroup of $G_\varphi$, and $G_\varphi ^\mathfrak a \cdot \mu$ is closed.
\end{lemma}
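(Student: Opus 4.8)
The plan is to deduce the closedness of $G_\varphi^{\mathfrak a}\cdot\mu$ from that of $G_\varphi\cdot\mu$ by realizing $G_\varphi^{\mathfrak a}$ as the identity component of the centralizer in $G_\varphi$ of a connected, fully reducible subgroup that fixes $\mu$, and then invoking Lemma~\ref{lemma: luna on centralizer of a stab grp having closed orbit}. First note that $G_\varphi\cdot\mu$ is closed: since $S$ is of Einstein type, $\n=\nilrad(\s)$ is an Einstein nilradical (Proposition~\ref{prop.Lau}), so $N$ admits a nilsoliton metric and Proposition~\ref{prop.nik2} applies. Next I would introduce the abelian algebra $\bfrak=\af^{\R}+\af^{i\R}\subset\Der(\n)$ spanned by the real and imaginary parts of the operators $\ad_\s(A)|_\n$, $A\in\af$, where $\s=\af+\n$ is the standard decomposition fixed in Proposition~\ref{prop.a2}. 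By~\ref{nota.ss}, each $\ad_\s(A)|_\n$ is semisimple, its real and imaginary parts are derivations of $\n$, and each is a real polynomial in $\ad_\s(A)|_\n$; hence $\bfrak$ consists of commuting semisimple operators, $\varphi=\ad(W)|_\n^{\R}\in\bfrak$, and an endomorphism of $\n$ centralizes $\ad_\s(\af)|_\n$ if and only if it centralizes $\bfrak$. Consequently $\g_\varphi^{\mathfrak a}=\{X\in\g_\varphi:[X,\bfrak]=0\}$ and $G_\varphi^{\mathfrak a}=Z_{G_\varphi}(\bfrak)_0$.

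The key step is to show $\bfrak=(\bfrak\cap\g_\varphi)\oplus\R\varphi$. Since $\bfrak$ commutes with $\varphi$, one has $\bfrak\cap\g_\varphi=\bfrak\cap\mathfrak{sl}_n(\R)\cap\ker t$, where $t(D)=\mytrace(\varphi D)$ as in Proposition~\ref{prop.nik2}. Now every $D\in\bfrak$ is a derivation of $\n$, so the defining property of the pre-Einstein derivation (Definition~\ref{def.pre-Einst}) gives $t(D)=\mytrace(\varphi D)=\mytrace(D)$; thus $t|_\bfrak=\mytrace|_\bfrak$ and $\bfrak\cap\g_\varphi=\ker(\mytrace|_\bfrak)$. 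This functional is nonzero, since $\varphi\in\Der(\n)$ forces $\mytrace(\varphi)=\mytrace(\varphi^2)>0$ (the eigenvalues of $\varphi$ being positive because $\n$ is an Einstein nilradical; cf.~\ref{note.nik}), and in particular $\varphi\notin\bfrak\cap\g_\varphi$. Hence $\bfrak\cap\g_\varphi$ is a hyperplane in $\bfrak$ transverse to $\R\varphi$, proving the claim. Since every element of $G_\varphi$ already centralizes $\varphi$, it follows that $Z_{G_\varphi}(\bfrak\cap\g_\varphi)=Z_{G_\varphi}\big((\bfrak\cap\g_\varphi)+\R\varphi\big)=Z_{G_\varphi}(\bfrak)$, and therefore $G_\varphi^{\mathfrak a}=Z_{G_\varphi}(\bfrak\cap\g_\varphi)_0$.

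The two assertions of the lemma now follow. The subalgebra $\bfrak\cap\g_\varphi$ of $\g_\varphi$ is fully reducible (being made of commuting semisimple operators), so its centralizer in the fully reducible, pre-algebraic group $G_\varphi$ is again fully reducible and pre-algebraic by the standard facts recalled at the start of Section~\ref{sec: technical lemmas on GIT}; taking the identity component shows $G_\varphi^{\mathfrak a}$ is a fully reducible, pre-algebraic subgroup of $G_\varphi$. For the closedness of $G_\varphi^{\mathfrak a}\cdot\mu$, let $H<G_\varphi$ be the connected subgroup with Lie algebra $\bfrak\cap\g_\varphi$. Then $H$ is connected and fully reducible, and it fixes $\mu$ because $\operatorname{Lie}(H)\subset\Der(\n)=\operatorname{Lie}(\Aut(\n))=\operatorname{Lie}(\Stab_{GL(\n)}(\mu))$. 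Applying Lemma~\ref{lemma: luna on centralizer of a stab grp having closed orbit} with $G=G_\varphi$, $p=\mu$, and this $H$: the orbit $G_\varphi\cdot\mu$ is closed if and only if $Z_{G_\varphi}(H)_0\cdot\mu$ is closed. Since $G_\varphi\cdot\mu$ is closed and $Z_{G_\varphi}(H)_0=G_\varphi^{\mathfrak a}$ by the previous paragraph, we conclude that $G_\varphi^{\mathfrak a}\cdot\mu$ is closed.

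I expect the main obstacle to be the identification $Z_{G_\varphi}(H)_0=G_\varphi^{\mathfrak a}$: one must see both that replacing $\bfrak$ by $\bfrak\cap\g_\varphi$ costs nothing — because $\varphi$ is centralized by all of $G_\varphi$ automatically — and that $\bfrak\cap\g_\varphi$ is precisely a hyperplane complement to $\R\varphi$ in $\bfrak$, rather than a subspace of larger codimension. This is exactly the point at which the pre-Einstein equation is used, forcing $t$ and $\mytrace$ to coincide on $\bfrak$. The remaining ingredients — full reducibility and pre-algebraicity of centralizers of fully reducible subalgebras, and the fact that $H$ stabilizes $\mu$ — are routine given the machinery already developed.
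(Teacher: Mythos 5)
Your proof is correct and follows essentially the same route as the paper: both use the pre-Einstein equation $\mytrace(\varphi D)=\mytrace(D)$ to split $\ad(\af)|_\n$ (up to the harmless semisimple hull $\bfrak$) as a hyperplane lying in $\g_\varphi$ plus $\R\varphi$, observe that $G_\varphi$ already centralizes $\varphi$, and then invoke Lemma~\ref{lemma: luna on centralizer of a stab grp having closed orbit} for the centralizer of this fully reducible piece of the stabilizer of $\mu$. The only cosmetic difference is your use of $\bfrak=\af^\R+\af^{i\R}$ where the paper works directly with a codimension-one subspace $\af_0\subset\af$, which is immaterial under the standing Hypotheses~\ref{hyp}.
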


\begin{proof}   By~\ref{note.nik}, $G_\varphi$ is fully reducible  and pre-algebraic.  By Definition~\ref{def.pre-Einst} and Proposition~\ref{prop.nik2}, $\ad(\af)\cap\ker(t)=\ad(\af)\cap \mathfrak{sl}(n,\R)$ and hence there exists a codimension one subspace $\af_0$ of $\af$ such that $\ad(\af_0)\subset \g_\varphi$ and $\af=\af_0+\R \varphi$.  Since all elements of $G_\varphi$ commute with $\varphi$, we have $G_\varphi^{\mathfrak a}=Z_{G_{\varphi}}(\ad(\af_0))$.   The group $\Ad(A_0)$ is an abelian group of real semisimple transformations and hence is fully reducible.  Hence $G_\varphi ^\mathfrak a$ is also pre-algebraic and fully reducible.  By Luna's result Lemma~\ref{lemma: luna on centralizer of a stab grp having closed orbit}, the orbit $G_\varphi ^\mathfrak a \cdot \mu$ is closed.

\end{proof}

\begin{lemma}\label{prop.h}  Let $F=\{X\in G_\varphi^\mathfrak a: X|_{\n_1}=Id\}_0$.   Then
	\begin{enumerate}
	\item $F$ is a fully reducible, pre-algebraic, normal subgroup of $G_\varphi ^\mathfrak a $.
	\item $F\cdot \mu$ is closed.
	\item The Lie algebra of the stabilizer $F_\mu$ coincides with $\mathfrak{e}\cap \mathfrak{sl}(n,\R)$ where $\mathfrak{e}$ is the subalgebra of $\Der(\n)$ defined in Lemma~\ref{lem.extend}.
		\item  Writing $E=(F_\mu)_0$, then $(Z_F(E))_0\cdot \mu$ is closed.
		\item $i(C)<(Z_F(E))_0$.		
	\end{enumerate}
			
\end{lemma}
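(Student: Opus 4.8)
The plan is to establish the five assertions in turn, feeding each into the next via the inheritance results of Section~\ref{sec: technical lemmas on GIT}.

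\emph{Parts (i) and (ii).} First I would observe that every element of $G_\varphi^{\mathfrak a}$ commutes with $\ad_\s(\af)|_\n$, in particular with $\ad(W_2)|_\n$; since $\ad(W_2)|_\n$ is semisimple with $\n_1$ as its zero eigenspace and $\n_2$ as the span of its nonzero eigenspaces, $G_\varphi^{\mathfrak a}$ preserves both $\n_1$ and $\n_2$, and restriction to $\n_1$ is a regular homomorphism $r\colon G_\varphi^{\mathfrak a}\to GL(\n_1)$. Extending $r$ to the Zariski closure, $\ker r$ is an algebraic normal subgroup, so $F=(\ker r)_0$ is pre-algebraic and normal in $G_\varphi^{\mathfrak a}$; its Lie algebra is an ideal of $\g_\varphi^{\mathfrak a}$, hence fully reducible by Mostow (as invoked for Lemma~\ref{lem.extend}(iv)), so $F$ is fully reducible. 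Then (ii) is immediate: $G_\varphi^{\mathfrak a}\cdot\mu$ is closed by Lemma~\ref{thm:refinement of Niko}, and $F$ is a normal, fully reducible, pre-algebraic subgroup of it, so $F\cdot\mu$ is closed by Proposition~\ref{prop: normal subgroups have closed orbits}.

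\emph{Parts (iii) and (iv).} For (iii) I would compute Lie algebras: the stabilizer of $\mu$ in any subgroup of $G_\varphi$ has Lie algebra the intersection of that subgroup's algebra with $\Der(\n)\cap\mathfrak{sl}(n,\R)$ (see~\ref{note.nik}(i)); since $\operatorname{Lie}(F)=\{X\in\g_\varphi^{\mathfrak a}:X|_{\n_1}=0\}$ and a derivation commuting with $\ad_\s(\af)|_\n$ is precisely an element of $\Der(\n)^{\af}$, the intersection is $\{X\in\Der(\n)^{\af}\cap\mathfrak{sl}(n,\R):X|_{\n_1}=0\}=\mathfrak{e}\cap\mathfrak{sl}(n,\R)$. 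For (iv), set $E=(F_\mu)_0$ and apply Lemma~\ref{lemma: luna on centralizer of a stab grp having closed orbit} with ambient group $F$ and subgroup $E$: $F$ is fully reducible and pre-algebraic by (i); $E$ is connected, stabilizes $\mu$, and is fully reducible, because $F\cdot\mu$ is closed by (ii) and so $F_\mu$ is fully reducible by Proposition~\ref{prop: Gp closed implies stabilizer is reductive}. Since $F\cdot\mu$ is closed, the lemma yields that $(Z_F(E))_0\cdot\mu$ is closed.

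\emph{Part (v), the main obstacle.} This step carries the most bookkeeping, using the block decomposition $\n=\n_1\oplus\n_2$ and the embedding $i$ of~\ref{note.nik2}(ii). First I would check $i(C)<F$: both $\ad(\af_1)|_\n$ and $\ad(W_2)|_\n$ are block-diagonal for $\n=\n_1\oplus\n_2$, so $i(\alpha)$ ($\alpha\in C$) commutes with $\ad(W_2)|_\n$ because $\alpha$ commutes with $\varphi_2$ (true throughout $G_{\varphi_2}$) and commutes with $\ad(\af_1)|_\n$ because $\alpha$ commutes with $\rho(\af_1)\subset\rho(\g_1)$ (since $C<Z_{G_{\varphi_2}}(\rho(\g_1))$); together with $i(C)<G_\varphi$ from Lemma~\ref{lem.rho}(ii), connectedness of $i(C)$, and $i(C)|_{\n_1}=\Id$, this places $i(C)$ inside $F$. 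Finally, by (iii) restriction to $\n_2$ identifies $\operatorname{Lie}(E)=\mathfrak{e}\cap\mathfrak{sl}(n,\R)$ with $\dercntwo\cap\mathfrak{sl}(n_2,\R)=\mathfrak{d}$ (traces agree since the restriction to $\n_1$ vanishes, and surjectivity onto $\dercntwo$ is Lemma~\ref{lem.extend}(ii)), so $\operatorname{Lie}(E)=i_*(\mathfrak{d})$; since $\mathfrak{c}=Z_{\g_{\varphi_2}}(\h)$ with $\h\supset\mathfrak{d}$, the algebra $i_*(\mathfrak{c})$ centralizes $i_*(\mathfrak{d})=\operatorname{Lie}(E)$, and as $i(C)$ and $E$ are connected this gives $i(C)<(Z_F(E))_0$, completing the argument.
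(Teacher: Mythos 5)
Your proposal is correct and follows essentially the same route as the paper's (very terse) proof: normality of the kernel of restriction to $\n_1$ for (i), Proposition~\ref{prop: normal subgroups have closed orbits} for (ii), the identification of $\operatorname{Lie}(F_\mu)$ with $\mathfrak{e}\cap\mathfrak{sl}(n,\R)$ for (iii), Lemma~\ref{lemma: luna on centralizer of a stab grp having closed orbit} for (iv), and the definition of $C$ in Corollary~\ref{cor.plan} for (v). You simply supply the details (e.g.\ that $i(C)<F$ and that $\operatorname{Lie}(E)=i_*(\mathfrak{d})$ is centralized by $i_*(\mathfrak{c})$) that the paper leaves implicit, and these details check out.
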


\begin{proof}  Since $G_\varphi^{\mathfrak a}$ commutes with $\varphi_2$, it normalizes each of $\n_1$ and $\n_2$.    Statement (i) is thus immediate.   Statement (ii) follows from the closedness of the orbit $G_\phi ^\mathfrak{a} \cdot \mu $  together with Proposition \ref{prop: normal subgroups have closed orbits}, and Statement (iii) follows from Lemma~\ref{lem.extend}(i).   (iv) is a consequence of Lemma~\ref{lemma: luna on centralizer of a stab grp having closed orbit}.   Finally, (v) follows from the definition of $C$ in Corollary~\ref{cor.plan}; in fact $i(C)=(Z_F(E))_0\cap i(C_\rho)$.
\end{proof}

To complete the proof of the Key Lemma, note that $i(C)$ is a fully reducible, pre-algebraic group.  We apply Corollary~\ref{cor:inheritance of closed orbits} with $(Z_F(E))_0$ playing the role of $G$ and $i(C)$ playing the role of $G'$ to conclude that $i(C)\cdot\mu$ is closed.   The Key Lemma now follows from Corollary~\ref{cor.plan}.

%
%
%
%
%
%
%
%
%
%

\section{Extensions of soliton metrics}\label{exts}
A long standing and important question is to understand when a solvable or nilpotent Lie group admits an Einstein or Ricci soliton metric.  The first characterization on the existence of Ricci soliton metrics on nilpotent Lie groups was due to Lauret \cite{LauretNilsoliton}.

\begin{thm}[Lauret] A nilpotent Lie group $N$ admits a Ricci soliton metric if there exists an abelian group $A$ acting reducibly on $\mathfrak n$ such that $A\ltimes N$ admits an Einstein metric.
\end{thm}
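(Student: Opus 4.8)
The plan is to deduce the statement directly from Lauret's structural results recorded in Proposition~\ref{prop.Lau}, with the only real work being to recognize that $N$ is forced to be the nilradical of $S:=A\ltimes N$. So suppose $A$ is abelian, acts (fully) reducibly on $\n$, and $S$ carries a left-invariant Einstein metric $g$; consistent with the paper's convention, I take this to mean a non-flat, hence negative Ricci curvature, Einstein metric.

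First I would apply Proposition~\ref{prop.Lau}(i): $(S,g)$ is standard, so there is a $g$-orthogonal decomposition $\s=\bar{\af}+\bar{\n}$ with $\bar{\af}$ abelian and $\bar{\n}=[\s,\s]=\nilrad(\s)$ as in Definition~\ref{std}. Next I would pin down $\bar{\n}$. On the one hand, $\n$ is by construction a nilpotent ideal of $\s=\af\ltimes\n$, hence contained in the maximal nilpotent ideal, so $\n\subseteq\nilrad(\s)=\bar{\n}$. On the other hand, since $\n$ is an ideal and $\af$ is abelian, $[\s,\s]=[\af,\n]+[\n,\n]\subseteq\n$, so $\bar{\n}=[\s,\s]\subseteq\n$. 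Combining, $\n=\bar{\n}=\nilrad(\s)$, i.e.\ $N$ is precisely the nilradical of $S$. At this point Proposition~\ref{prop.Lau}(ii) applies and says that the restriction of $g$ to $N$ is a nilsoliton --- equivalently, $N$ is an Einstein nilradical and therefore admits a nilsoliton (Ricci soliton) metric --- which is the assertion.

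I do not expect a genuine obstacle here: the content is entirely carried by Lauret's theorem (standardness, plus ``the restriction to the nilradical is a nilsoliton''), and the identification $\n=\nilrad(\s)$ is a one-line bracket computation. Two points merit a sentence of care. First, the full reducibility of the $A$-action, although natural for phrasing the theorem as an \emph{extension} result, plays no role in this direction and can be dropped. Second, the hypothesis that $g$ be non-flat is what lets us invoke Proposition~\ref{prop.Lau}(i); in the degenerate flat case one instead uses that a nilpotent Lie group with a flat left-invariant metric is abelian, so that $N$ is Euclidean and the flat metric is a (trivial) Ricci soliton.
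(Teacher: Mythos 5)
Your argument is correct, but note that the paper itself offers no proof of this statement: it is quoted in Section~\ref{exts} as a theorem of Lauret, cited to \cite{LauretNilsoliton}, purely as motivation for the extension questions discussed there, so the only comparison available is with Lauret's original argument and with the background recorded in Section~\ref{einstauts}. Your key step --- that $\n$ is a nilpotent ideal of $\s=\af\ltimes\n$ while $[\s,\s]=[\af,\n]+[\n,\n]\subseteq\n$, so that standardness forces $\n=\nilrad(\s)$ --- is a correct one-line computation, and Proposition~\ref{prop.Lau}(ii) then delivers the nilsoliton as the restriction of the Einstein metric. The genuine difference from the original is that your route runs through Proposition~\ref{prop.Lau}(i), the standardness theorem of \cite{LauretStandard}, which postdates \cite{LauretNilsoliton} by a decade and is much deeper than the statement being proved; the hypothesis that $A$ be abelian and act reducibly is exactly what let Lauret extract the needed orthogonal decomposition and the structure of $\ad(\af)|_\n$ directly in 2001, without standardness. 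So you are right that reducibility is logically redundant given the background the paper records (the paper itself remarks after Theorem~\ref{lj} that such hypotheses ``can be deduced''), but it is not vacuous in the original formulation. Two minor points of care: in your flat digression the flat metric lives on $S=A\ltimes N$, not on $N$, so one cannot directly cite ``nilpotent plus flat implies abelian''; instead Milnor's structure theorem for flat solvable metric Lie algebras shows $\nilrad(\s)$, hence $\n$, is abelian --- though this case is excluded anyway by the paper's standing restriction to non-flat Einstein metrics in Subsection~\ref{subsec.einst}. And Proposition~\ref{prop.Lau}(ii) is stated for simply-connected $N$, which is harmless since solvable Lie groups of Einstein type are simply connected, as noted in Subsection~\ref{subsec.isom}.
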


This was later extended to solvable Lie groups for which the Ricci soliton is a so-called `algebraic Ricci soliton' \cite{Lauret:SolSolitons} and in \cite{Jablo:HomogeneousRicciSolitons} it was shown that all Ricci solitons on solvable Lie groups are algebraic.  Thus we have the following.

\begin{thm}[Lauret, Jablonski]\label{lj}  A solvable Lie group $S$ admits a Ricci soliton metric if there exists an abelian group $A$ acting reducibly on $\mathfrak s$ such that $A\ltimes S$ admits an Einstein metric.
\end{thm}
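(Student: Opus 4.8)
The plan is to produce a \emph{solvsoliton} on $S$ (equivalently, on a standard modification $S'$ of $S$, which suffices by \cite{Jablo:HomogeneousRicciSolitons}: a modification acts simply transitively on the same homogeneous space, so a solvsoliton metric there is at the same time a left-invariant Ricci soliton on $S$). So write $\tilde\s=\af\ltimes\s$ and assume $\tilde S$ admits an Einstein metric $g$ of negative Ricci curvature. By Proposition~\ref{prop.Lau} this metric is standard; I would fix a standard decomposition $\tilde\s=\tilde\af+\tilde\n$ as in Remark~\ref{rem.maxred}, with $\tilde\af\perp\tilde\n$ relative to $g$. Then $\tilde\n=\nilrad(\tilde\s)$, the metric $g$ restricts to a nilsoliton inner product $\Ip$ on $\tilde\n$ (Proposition~\ref{prop.Lau}(ii)), and $\tilde\af$ is abelian and fully $\ad$-reducible with $\ad(A)^\R|_{\tilde\n}$ symmetric relative to $\Ip$ for every $A\in\tilde\af$ (Proposition~\ref{prop.heb}).

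Next I would identify the two nilradicals. Since $\nilrad(\s)$ is a characteristic ideal of $\s$, it is $\ad(\af)$-invariant, hence a nilpotent ideal of $\tilde\s$, so $\nilrad(\s)\subseteq\nilrad(\tilde\s)$; for the reverse inclusion one uses the hypothesis that $A$ acts on $\s$ by \emph{semisimple} transformations together with the fact that $\tilde\s$ has trivial center (Remark~\ref{rem.maxred}), so that no nonzero element with a component along $\af$ can lie in a nilpotent ideal of $\tilde\s$. Thus $\nilrad(\tilde\s)=\nilrad(\s)=:\n$, and in particular $N$ already admits a nilsoliton metric. Moreover $\s$ is an ideal of $\tilde\s$ containing $\n=\tilde\n$, so $\s=\cf\oplus\n$ where $\cf:=\s\cap\tilde\af$ is an abelian subalgebra of $\tilde\af$; thus $\s=\cf\ltimes\n$, and $\ad_\s(C)|_\n$ is a semisimple derivation of $\n$ for each $C\in\cf$.

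I would then set $\cf^\R:=\{\ad(C)^\R|_\n:C\in\cf\}$. By item~(ii) of~\ref{nota.ss} this is again an abelian subalgebra of $\Der(\n)$, and by Proposition~\ref{prop.heb}(ii) every element of $\cf^\R$ is symmetric relative to $\Ip$. The algebra $\s':=\cf^\R\ltimes\n$ is a standard modification of $\s=\cf\ltimes\n$ with respect to the metric $h$ extending $\Ip$ by $\cf\perp\n$: for $C\in\cf$ the imaginary part $\ad(C)^{i\R}|_\n$, extended by $0$ on $\cf$, commutes with $\ad(C)$ and so is a skew-symmetric derivation of $(\s,h)$, and taking the orthogonal complement of the skew-symmetric derivations inside the normalizer (Definition~\ref{def.stdpos}) replaces $\cf$ by $\cf^\R$. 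It would then remain to extend $\Ip$ to a solvsoliton on $\s'$; this is the solvsoliton counterpart of Corollary~\ref{cor.nik}, namely Lauret's structure theorem \cite{Lauret:SolSolitons}: given a nilsoliton $(\n,\Ip)$ and \emph{any} abelian subalgebra of $\Ip$-symmetric derivations, the corresponding solvable extension admits a solvsoliton — and, unlike the Einstein case, no pre-Einstein condition on $\cf^\R$ is required here. A final application of \cite{Jablo:HomogeneousRicciSolitons} to the modification $S\leftrightarrow S'$ produces a left-invariant Ricci soliton on $S$.

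The one genuinely substantive step is the last: invoking Lauret's solvsoliton extension \cite{Lauret:SolSolitons} and checking carefully that $\s'=\cf^\R\ltimes\n$ is indeed a standard modification of $\s$, so that \cite{Jablo:HomogeneousRicciSolitons} applies verbatim and the soliton descends to $S$. Everything else — identifying $\nilrad(\tilde\s)$ with $\nilrad(\s)$ and peeling off $\cf=\s\cap\tilde\af$ — is routine bookkeeping that only uses that $A$ acts reducibly, and this hypothesis is in any case automatic once $\tilde S$ is known to be of Einstein type, since Einstein-type solvable Lie algebras are standard.
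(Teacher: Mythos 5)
Your argument is essentially correct, but note that the paper itself does not prove Theorem~\ref{lj}: it is presented purely as a corollary of the cited results of Lauret \cite{Lauret:SolSolitons} and Jablonski \cite{Jablo:HomogeneousRicciSolitons}, introduced only by the sentence ``Thus we have the following.'' What you have written is a reasonable reconstruction of the intended derivation, resting on Lauret's extension theorem (any abelian algebra of symmetric derivations of a nilsoliton generates a solvsoliton, with a suitable inner product on the abelian factor), and I see no gap in it. Two simplifications are worth recording. First, the inclusion $\nilrad(\tilde\s)\subseteq\s$ needs no semisimplicity of the $\af$-action: since $\tilde\s=\af\ltimes\s$ is of Einstein type it is standard, so $\nilrad(\tilde\s)=[\tilde\s,\tilde\s]\subseteq\s$ because $\af$ is abelian and $\s$ is an ideal; together with $\nilrad(\s)\subseteq\nilrad(\tilde\s)$ (as $\nilrad(\s)$ is characteristic in the ideal $\s$) this gives equality of the nilradicals with no hypothesis on the action, consistent with the paper's remark that reducibility can be dropped a priori. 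Second, you do not need to verify that $\cf^{\R}\ltimes\n$ is literally \emph{the} standard modification of $\cf\ltimes\n$ in the sense of Definition~\ref{def.stdpos}; all that is required is that $S$ act simply transitively by isometries on the solvsoliton $(S',g')$, and this follows directly: each $\ad(C)^{i\R}|_{\n}$ for $C\in\cf$, extended by zero on $\cf^{\R}$, is a skew-symmetric derivation of $(\s',g')$ commuting with $\cf^{\R}$ (by~\ref{nota.ss} and Proposition~\ref{prop.heb}), so $C+X\mapsto(\ad(C)^{i\R}+\ad(C)^{\R})+X$ embeds $\s$ into $\Der_{\operatorname{skew}}(\s',g')+\s'$ as a complement to the isotropy algebra, injectivity of $C\mapsto\ad(C)^{\R}|_{\n}$ on $\cf\subseteq\tilde\af$ being Proposition~\ref{prop.heb}(ii). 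A left-invariant metric isometric to a Ricci soliton is a Ricci soliton, so the conclusion follows; the only convention to flag is that ``Einstein'' here means negative Einstein constant, the flat case being excluded throughout the paper.
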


Upon inspection of the structure of these groups, one can drop the condition that $A$ act reducibly, a priori, as it can be deduced.

All known examples of non-compact, homogeneous Einstein and Ricci soliton spaces are isometric to solvable Lie groups with left-invariant Riemannian metrics.   Naturally, one asks if results analogous to Theorem~\ref{lj} are possible if $A$ is replaced with some non-abelian solvable Lie group.  The Key Lemma above (Lemma \ref{key}) is one such extension where a nilpotent Lie group is extended by a solvable group with some conditions on the action of the extension.

\begin{question}  Consider an extension $S_1 \ltimes N_2$ as in the Key Lemma.  Can we drop the hypothesis that the adjoint representation of $S_1$ on $\mathfrak n_2$ extend to a representation of the full semisimple to which $S_1$ belongs?
\end{question}

\begin{prop} There exists a  solvable group  $S_1$ and a nilpotent group $N_2$ such that $S = S_1\ltimes N_2$ admits an Einstein metric, but $N_2$ does not admit a soliton. 
\end{prop}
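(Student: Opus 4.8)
The plan is to first reduce the statement to a clean question about nilpotent Lie algebras and then to produce a concrete example. Since $N_2$ is nilpotent, ``$N_2$ admits a soliton'' is the same as ``$\n_2$ is an Einstein nilradical'' (Proposition~\ref{prop.Lau}), i.e.\ by Proposition~\ref{prop.nik2} the same as the orbit $G_{\varphi_2}\cdot\mu_2$ being closed. Two observations pin down the shape of any example. First, $S_1$ must be non-abelian: if it were abelian, then $S=S_1\ltimes N_2$ admitting an Einstein metric would, by Theorem~\ref{lj} applied to the solvable group $N_2$, force $N_2$ to admit a soliton; thus, in contrast with the Key Lemma~\ref{key} (where the $S_1$-representation on $\n_2$ extends to an ambient semisimple group), we must use a solvable action that does not so extend. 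Second, $\n_2$ is a nilpotent ideal of $\s$, hence lies in $\n:=\nilrad(\s)$, and $\n$ is itself an Einstein nilradical by Proposition~\ref{prop.Lau}; moreover $\n=\n_1+\n_2$ with $\n_1:=\nilrad(\s_1)$ and $\s=\af_1\ltimes\n$ for an abelian $\af_1$ whose image in $\Der(\n)$ is fully reducible and contains the Einstein derivation (Proposition~\ref{prop.heb}). So the proposition amounts to the assertion that an Einstein nilradical can contain a non-Einstein-nilradical ideal carrying a complementary subalgebra, compatibly with the pre-Einstein grading.

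This suggests the reduction: \emph{it suffices to exhibit an Einstein nilradical $\n$ and a vector-space decomposition $\n=\n_1\oplus\n_2$ such that $\n_2$ is an ideal of $\n$, $\n_1\neq\{0\}$ is a subalgebra, both summands are invariant under the pre-Einstein derivation $\varphi$ of $\n$, and $\n_2$ is not an Einstein nilradical.} Granting such data, set $\s:=\R\varphi\ltimes\n$; by Proposition~\ref{prop.nik}(ii) the derivation $\varphi$ is a positive multiple of the Einstein derivation and is nonsingular, so by Proposition~\ref{prop.heb}(iv) (or Corollary~\ref{cor.nik} with $\af=\R\varphi$) the algebra $\s$ is of Einstein type. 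Put $\s_1:=\R\varphi\ltimes\n_1$; it is a subalgebra (as $\varphi$ preserves $\n_1$), solvable, and non-abelian because $\varphi|_{\n_1}\neq 0$ ($\varphi$ being nonsingular and $\n_1\neq\{0\}$). Since $\n_2$ is a $\varphi$-stable ideal of $\n$, it is an ideal of $\s$, so $\s=\s_1\ltimes\n_2$; passing to simply-connected Lie groups gives $S=S_1\ltimes N_2$ with $S$ Einstein and $N_2$ nilpotent admitting no soliton, as required.

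It remains to produce the data, and this is the main obstacle --- the one step requiring a hands-on construction. The simplest shape to attempt is $\n_1=\R X$ one-dimensional, so that $\n=\R X\ltimes_\delta\n_2$ is a one-dimensional extension of $\n_2$ by a derivation $\delta$, necessarily nilpotent since $\n$ is, and $\s_1$ is the two-dimensional non-abelian ($ax+b$) Lie algebra $\R\varphi\ltimes\R X$; one then wants a nilpotent $\n_2$ that is not an Einstein nilradical together with a nilpotent $\delta$ such that $\R\ltimes_\delta\n_2$ is an Einstein nilradical whose pre-Einstein derivation stabilizes the splitting $\R X\oplus\n_2$. I would start from an explicit non-Einstein-nilradical: one of the families in~\cite{Nikolayevsky:EinsteinSolvmanifoldsandPreEinsteinDerivation}, or a characteristically nilpotent Lie algebra, for which $\varphi_2=0$, $G_{\varphi_2}=SL(n_2,\R)$, and the stabilizer of $\mu_2$ is unipotent, hence non-reductive, so $G_{\varphi_2}\cdot\mu_2$ is not closed by Proposition~\ref{prop: Gp closed implies stabilizer is reductive}. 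One then searches among the nilpotent derivations $\delta$ for one making the extension symmetric enough to be an Einstein nilradical, certifying closedness of $G_\varphi\cdot\mu$ either by the Hilbert--Mumford criterion (Theorem~\ref{thm: HMC - classical}), checking closedness of $\lambda\cdot\mu$ for the finitely many relevant $1$-parameter subgroups of $G_\varphi$, or by writing down a nilsoliton inner product with $\Ric=c\,\Id+D$, $D\in\Der(\R\ltimes_\delta\n_2)$. An alternative supply of data is given by ideals of the nilradical $\n$ of a parabolic subalgebra of a semisimple Lie group of large enough rank: such $\n$ are always Einstein nilradicals, $\varphi$ is diagonal on the root spaces, and one looks for a partition of the root spaces into a ``closed'' part $\n_1$ and an absorbing part $\n_2$ with $\n_2$ not an Einstein nilradical. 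The delicate point is that the most naive choices --- an abelian complement, or a small, highly homogeneous ideal --- tend to yield Einstein nilradicals, so some care (and dimension) is needed to force the defect into $\n_2$ while repairing it in $\n$.
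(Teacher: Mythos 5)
Your reduction is sound and in fact matches the architecture of the paper's example exactly: the paper takes $\n=\R e_0\ltimes_\delta \n_2$ with $\n_2$ a $2$-step nilpotent ideal, $\delta=\ad(e_0)|_{\n_2}$ a nilpotent derivation, and $\s_1=\operatorname{span}\{A,e_0\}$ the two-dimensional non-abelian ($ax+b$) algebra, i.e.\ precisely the shape you propose with $\n_1=\R X$ one-dimensional. Your preliminary observations (that $S_1$ must be non-abelian by Theorem~\ref{lj}, and that $\n_2\subset\nilrad(\s)$ so the whole question lives inside an Einstein nilradical $\n$ with a $\varphi$-stable splitting) are correct and correctly argued, as is the passage from the Lie-algebra data back to the groups via Corollary~\ref{cor.nik}.

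However, there is a genuine gap: the proposition is a pure existence statement, and you never produce the example. You explicitly flag the construction of the data as "the main obstacle" and then describe two search strategies (start from a known non-Einstein nilradical and hunt for a suitable nilpotent $\delta$; or carve ideals out of nilradicals of parabolics) without carrying either one out. Neither strategy is guaranteed to terminate, and the delicate point you yourself identify --- that the defect of $\n_2$ must be "repaired" by the extension --- is exactly where all the work lies. The paper resolves this by writing down an explicit $11$-dimensional $2$-step nilpotent algebra $\n=\operatorname{span}\{e_0,\dots,e_8,z_1,z_2\}$ with hand-tuned structure constants ($\sqrt{8},\sqrt{12},\sqrt{3},3$) and a diagonal derivation $\ad A|_\n=\operatorname{diag}\{21,17,21,19,\dots,19,38,38\}$, so that the standard orthonormal basis gives an Einstein metric on $\s=\R A\ltimes\n$; it then sets $\n_2=\operatorname{span}\{e_1,\dots,e_8,z_1,z_2\}$ and invokes Proposition 4.4 of \cite{Jablo:ModuliOfEinsteinAndNoneinstein} (a classification of Einstein versus non-Einstein nilradicals in a specific family) to certify that $\n_2$ admits no nilsoliton. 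Without this concrete construction --- or some other completed one --- the proof is not done; what you have is a correct reduction of the proposition to a well-posed but unsolved search problem.
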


To build such an example, we begin by defining the bracket relations for a nilpotent Lie algebra which will be the nilradical $\mathfrak n$ of $\mathfrak s$.  Let $\mathfrak n = span\{ e_0, \dots , e_8,z_1,z_2 \}$ as a vector space.  We define a 2-step nilpotent Lie bracket structure on $\mathfrak n$ according to the following relations.
	$$\begin{array}{ll}
	\left[e_1,e_2\right] = \sqrt 8 \  z_1   \hspace{1cm} { }     &     \left[e_0,e_1\right] = \sqrt 8 \  z_2 \\
	\left[e_3,e_4\right] = \sqrt 12 \ z_1 &   \\
	\left[e_5,e_6\right] = \sqrt 3 \ z_1  & \left[e_5,e_8\right] = 3 \ z_2   \\
	\left[e_7,e_8\right] = \sqrt 3 \ z_1  & \left[e_6,e_7\right] = 3 \ z_2   
	\end{array}$$
Using skew-symmetry and bilinearity, these relations completely determine the Lie bracket.  Notice that the center of $\mathfrak n$ is spanned by $\{z_1,z_2\}$.

We now extend the Lie bracket on $\mathfrak n$ to a solvable Lie algebra of dimension one greater.  Let $\mathfrak s = \mathfrak a \ltimes \mathfrak n$ where $\mathfrak a$ is 1-dimensional and spanned by an element $A$ acting on $\mathfrak n$ by 
	$$ad~A|_\mathfrak n = diag\{21,17,21,19,19,19,19,19,19,38,38\},$$
where the diagonal matrix is relative to the basis $\{ e_0, \dots , e_8,z_1,z_2 \}$ of $\mathfrak n$.

If we choose $\{A,e_0,\dots,e_8,z_1,z_2\}$ to be an orthonormal basis of $\mathfrak s$, then the Lie group $S$ with Lie algebra $\mathfrak s$ has a left-invariant Einstein metric.

Now observe that $\mathfrak s_1 = span\{A,e_0\}$ is isomorphic to the Iwasawa algebra of the semisimple Lie algebra $\mathfrak{sl}(2,\mathbb R)$.  Consider $\mathfrak n_2 = span\{e_1,\dots,e_8,z_1,z_2\}$ and observe that $\mathfrak n_2$ is an ideal.  In this way, we have decomposed the group $S$ into a semi-direct product
	$$S=  S_1\ltimes N_2.$$
All that remains is to prove  $N_2$ does not admit a left-invariant Ricci soliton.  By Proposition 4.4 of \cite{Jablo:ModuliOfEinsteinAndNoneinstein}, we see that $N_2$ does not admit such a metric.  (In the notation of that work, we have $k=2$ and $n=1$.)

\providecommand{\bysame}{\leavevmode\hbox to3em{\hrulefill}\thinspace}
\providecommand{\MR}{\relax\ifhmode\unskip\space\fi MR }
\providecommand{\MRhref}[2]{%
  \href{http://www.ams.org/mathscinet-getitem?mr=#1}{#2}
}
\providecommand{\href}[2]{#2}

\end{document}